\newtheorem{theorem}{Theorem}[section]
\newtheorem{lemma}[theorem]{Lemma}
\newtheorem{proposition}[theorem]{Proposition}
\newtheorem{corollary}[theorem]{Corollary}
\newtheorem{remark}[theorem]{Remark}
\def\O{\Omega}
\def\S{\Sigma}
\def\n{\nabla}
\def\p{\partial}
\def\a{\alpha}
\def\n{\nabla}
\def\mfk{\mathbf{k}}
\def\O{\Omega}
\def\p{\partial}
\def\a{\alpha}
\def\n{\nabla}
\def\<{\langle}
\def\>{\rangle}
\def\n{\nabla}
\def\O{\Omega}
\def\p{\partial}
\def\a{\alpha}
\def\R{\mathbb{R}}
\def\rr{\mathbb{R}}
\def\mbR{\mathbb{R}}
\def\mcH{\mathcal{H}}
\def\mcP{\mathcal{P}}
\def\mfW{\mathbf{W}}
\newcommand{\rd}{{\rm d}}
\begin{document}
	
	
	\title[Heintze-Karcher inequality and capillary hypersurfaces]{Heintze-Karcher inequality and \\capillary hypersurfaces in a wedge}
	
	\author[Jia]{Xiaohan Jia}
	\address[X.J]{School of Mathematics\\
Southeast University\\
 211189, Nanjing, P.R. China}
	\email{jia92@mail.ustc.edu.cn}
	
	\author[Wang]{Guofang Wang}
	\address[G.W]{Mathematisches Institut\\
		Universit\"at Freiburg\\
	Ernst-Zermelo-Str.1,
		79104, Freiburg, Germany}
	\email{guofang.wang@math.uni-freiburg.de}
	
	\author[Xia]{Chao Xia}
	\address[C.X]{School of Mathematical Sciences\\
		Xiamen University\\
		361005, Xiamen, P.R. China}
	\email{chaoxia@xmu.edu.cn}
	
	\author[Zhang]{Xuwen Zhang}
	\address[X.Z]{School of Mathematical Sciences\\
		Xiamen University\\
		361005, Xiamen, P.R. China}
	\email{xuwenzhang@stu.xmu.edu.cn}
	\thanks{This work is  supported by the NSFC (Grant No. 11871406, 12271449, 12126102)}

	\begin{abstract}
{In this paper, we utilize the  method of Heintze-Karcher to prove a "best" version of Heintze-Karcher-type inequality for capillary hypersurfaces in the half-space or in  a wedge.  
 One of new crucial ingredients in the proof is  modified parallel hypersurfaces which are very natural to be used to study capillary hypersurfaces. A more technical part is a subtle analysis along  the edge of a wedge.
As an application, we classify completely  embedded capillary constant mean curvature hypersurfaces that hit the edge in a wedge, which is a subtler case.
}    
		
		\
		
		\noindent {\bf MSC 2020: 53C24, 35J25, 53C21}\\
		{\bf Keywords:}   Heintze-Karcher's inequality,
		capillary hypersurface, CMC hypersurface,  Alexandrov's theorem. \\
		
	\end{abstract}
	
	\maketitle
	
	\medskip

\section{Introduction}


The study of capillary surfaces goes back to  Thomas Young, who 
studied in 1805 the
equilibrium state of liquid fluids. It was he who  first introduced  the notion of mean curvature and 
the boundary contact angle condition of capillarity,  the so-called Young’s law.
This problem was  reintroduced and reformulated by  Laplace and by  Gauss  later.
For the history of  capillary surfaces  see Finn's survey \cite{Finn99}. 
A capillary  hypersurface $\S$ in $\R^{n+1}$ with boundary $\p\Sigma$  on a support hypersurface $S\subset \R^{n+1}$ is a critical point of the following
functional
$$|\Sigma |-\cos \theta |\bar \O\cap S| $$
among all compact hypersurfaces with boundary $\p\Sigma$  on $S$ under a  volume constraint. Here $\O$ is the bounded domain enclosed by $\S$ and $S$, and $|\Sigma|$
is the $n$-dimensional area of $\Sigma$,
and $\theta \in (0, \pi)$. Equivalently, a capillary hypersurface is a constant mean curvature (CMC) hypersurface with boundary which intersects the support hypersurface $S$ at a constant angle $\theta$. 
There has been a lot of 
 interdisciplinary
investigations on the stationary solutions and local minimizers of the above energy. 
For the interested reader, we refer to  Finn’s book  \cite{Finn86},
which is  an excellent  survey on capillary surfaces.

Inspired  by the recent development of the min-max theory for minimal surfaces and CMC surfaces
\cite{CD03, MN16a, MN16b}, there have been a lot of works on free boundary minimal surfaces and CMC surfaces, which are a special class of capillary surfaces with $\theta=\frac{\pi}{2}$, see for example \cite{DR18, LZ21, GLWZ21, ZZ19}. Very recently the min-max theory for capillary surfaces was developed in \cite{DMDP21, LiZZ21}. 

One important application of the capillary surfaces was recently obtained by Chao Li in \cite{Li21}, where he utilizes capillary surfaces in a polyhedron to study Gromov's dihedral rigidity conjecture. His work on capillary surfaces is related to this paper, in which we will consider capillary surfaces supported on a wedge. 

The main objective of this paper is {to make a complete classification of embedded capillary hypersurfaces in a wedge. Such a hypersurface can be seen as a model for capillary hypersurfaces in Riemannian polyhedra.
} For the precise definition of a wedge, see below in the Introduction. Our starting point is 
 the Heintze-Karcher inequality. Let us first recall Heintze-Karcher's theorem and Heintze-Karcher's inequality
for closed hypersurfaces.

In a seminal paper \cite{HK78},  Heintze-Karcher proved a general tubular volume comparison theorem for embedded Riemannian submanifolds, which generalizes the celebrated Bishop-Gromov's volume comparison theorem in Riemannian geometry.
For an embedded closed hypersurface $\S$, which encloses a bounded domain $\O$, in an $(n+1)$-dimensional Riemannian manifold of nonnegative Ricci curvature,  Heintze-Karcher's theorem reads as follows,
\begin{eqnarray}\label{HK1}
|\O|\le \int_\S \int_0^{c(p)}\left(1-\frac{H(p)}{n}t\right)^n \rd t\rd A.
\end{eqnarray}
Here $H(p)$ is the mean curvature of $\S$ at $p$ and $c(p)$ is the length to reach the first focal point of $\S$ from $p$ by the normal exponential map. 
As a direct consequence of \eqref{HK1}, one deduces that 
\begin{eqnarray}\label{HK2}
|\O|\le \frac{n}{n+1}\int_\S \frac{1}{H}\rd A,
\end{eqnarray}
provided that $\S$ is strictly mean convex, namely, $H>0$ on $\S$. 
Nowadays, \eqref{HK2} is literately referred to as Heintze-Karcher's inequality in  hypersurfaces theory. A well-known new proof  via Reilly's formula \cite{Reilly77} has been given by Ros \cite{Ros87}. Moreover, Ros \cite{Ros87} utilized the Heintze-Karcher inequality \eqref{HK2} to reprove the celebrated Alexandrov's soap bubble theorem, which states that any embedded closed constant mean curvature hypersurfaces in $\mathbb{R}^{n+1}$ must be a round sphere.

Since then various Heintze-Karcher-type inequalities have been established in various circumstance. For instance, Montiel-Ros \cite{MR91} and Brendle \cite{Br13} established  Heintze-Karcher-type inequalities in space forms and in certain warped product manifolds respectively, see also \cite{QX15, LX19}. {The Heintze-Karcher inequality in $\mathbb{R}^{n+1}$ has been also established for sets of finite perimeter, see e.g. \cite{DM19,San19}}. Like the Alexandrov-Fenchel inequalities, the Heintze-Karcher inequality  becomes one of fundamental geometric inequalities in differential geometry.

Inspired by the method of Ros \cite{Ros87}, and also by the work of Brendle \cite{Br13}, we have proved a Heintze-Karcher-type inequality for hypersurfaces with free boundary in a unit ball 
\cite{WX19} by using a generalized Reilly's  formula proved by Qiu-Xia \cite{QX15}. However this method leads to a slight different inequality if we consider hypersurfaces with capillary boundary in the unit ball or in the half-space in the very recent work, \cite{JXZ22}. Precisely
we have established in the previous work \cite{JXZ22} a version of Heintze-Karcher-type inequality for hypersurfaces in the half-space with capillary boundary, by using the solution to a mixed boundary value problem in the classical Reilly's formula. Let $\rr^{n+1}_+:=\{x\in \rr^{n+1}:\<x, E_{n+1}\>>0\}$, where $E_{n+1}=(0,\cdots, 0, 1)$, and for an embedded, compact, strictly mean-convex hypersurface $\Sigma\subset \overline{\rr^{n+1}_+}$ with capillary boundary
with a constant contact angle $\theta_0\in(0,\frac{\pi}{2}]$, there holds
\begin{align}\label{EQ-HK-halfspace0}
			\int_\S\frac{1}{H} \rd A\ge \frac{n+1}{n} |\O|+\cos\theta_0\frac{\left(\int_\S \<\nu, E_{n+1}\>\rd A\right)^2}{\int_\S H\<\nu, E_{n+1}\> \rd A},
		\end{align}
with equality if and only if $\S$ is a spherical cap. 


Inequality \eqref{EQ-HK-halfspace0} is optimal, in the sense that  the spherical caps achieve equality in \eqref{EQ-HK-halfspace0}. However it is not in the best form. 
For example, while we are able to use \eqref{EQ-HK-halfspace0} to reprove the Alexandrov theorem for constant mean curvature (CMC) hypersurfaces in \cite{JXZ22}, it is not very helpful to handle the case of higher order mean curvatures (see \eqref{defn-Pn(t)} for the definition).
In view of the following Minkowski formula
\begin{eqnarray}\label{mink-formula_1}
\int_\S n(1-\cos\theta_0\<\nu, E_{n+1}\>)-H\<x,\nu\> \rd A=0,
\end{eqnarray} 
a possible best form, which was conjectured in \cite{JXZ22}, is
\begin{align}\label{EQ-HK-halfspace_conj}
     \int_\S\frac{1-\cos\theta_0\left<\nu,E_{n+1}\right>}{H}\rd A
     \geq\frac{n+1}{n}\vert\O\vert.
    \end{align}
It is clear by using the Cauchy-Schwarz inequality that inequality \eqref{EQ-HK-halfspace_conj} implies \eqref{EQ-HK-halfspace0}, provided  that $\<\nu, E_{n+1}\> $ is non-negative. But without the non-negativity one does not know which one is stronger.

The first part of  this paper is to establish this ``best'' version of the Heintze-Karcher inequality in a little more general setting and for whole range $\theta_0\in (0,\pi)$. Let $\Sigma$ be a hypersurface in $\overline{\mbR^{n+1}_+}$ with (possibly  non-connected) boundary $\partial \Sigma \subset \partial \rr^{n+1}_+$. The hypersurface intersects with the supported hyperplane $\partial \rr^{n+1}_+$ transversely. 
\footnote{In the paper we abuse a little bit the terminology of capillarity. A capillary hypersurface mentioned at the beginning of the Introduction is  called a capillary CMC  hypersurface in the paper.}

\begin{theorem}\label{Thm-HK-halfspace}
   Let $\theta_0\in(0, \pi)$ and let $\S\subset\overline{\mbR^{n+1}_+}$ be a smooth, compact, embedded, strictly mean convex $\theta$-capillary hypersurface, with  $\theta(x)\le \theta_0$ for every $x\in \partial\Sigma$. Let $\O$ denote the enclosed domain by $\S$ and $\p \mbR^{n+1}_+$. Then it holds
    \begin{align}\label{EQ-HK-halfspace}
     \int_\S\frac{1-\cos\theta_0\left<\nu,E_{n+1}\right>}{H}\rd A
     \geq\frac{n+1}{n}\vert\O\vert.
    \end{align}
    Equality in \eqref{EQ-HK-halfspace} holds if and only if $\S$ is a $\theta_0$-capillary spherical cap.
\end{theorem}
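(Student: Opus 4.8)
The plan is to adapt the Heintze–Karcher method via modified parallel hypersurfaces, exactly as flagged in the abstract, rather than appealing to a Reilly-type formula. Given the capillary hypersurface $\Sigma$ with contact angle $\theta(x)\le\theta_0$, I would introduce the \emph{$\theta_0$-modified parallel hypersurface} map
\begin{align}\label{EQ-modified-parallel-proposal}
\Phi\colon \Sigma\times[0,\infty)\to\bar\mbR_+^{n+1},\qquad \Phi(x,t)=x-t\bigl(\nu(x)+\cos\theta_0\,E_{n+1}\bigr)\big/(\text{suitable normalization}),
\end{align}
whose image foliates (a neighborhood of) $\Sigma$ inside $\bar\mbR_+^{n+1}$; the key point of the ``modification'' by $\cos\theta_0\,E_{n+1}$ is that the flow leaves the hyperplane $\partial\mbR_+^{n+1}$ invariant precisely when the contact angle equals $\theta_0$, and slides \emph{into} the half-space when $\theta(x)<\theta_0$. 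First I would check that $\Phi$ is surjective onto $\bar\Omega$: for each interior point $z\in\Omega$, consider the function measuring the modified distance to $\Sigma$ along these flow lines and argue, via a first-point-of-contact / continuity argument as in Montiel–Ros and Brendle, that $z=\Phi(x,t)$ for some $(x,t)$ with $t$ not exceeding the first value $c_{\theta_0}(x)$ where the Jacobian degenerates.

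Next I would compute the Jacobian of $\Phi$. Writing the Jacobian factor in terms of the principal curvatures $\kappa_i$ of $\Sigma$, one gets along each normal line a polynomial of the form $\prod_i(1-\kappa_i t)$ up to the contribution of the $E_{n+1}$-correction term; the AM–GM inequality applied to the $\kappa_i$ bounds this by $\bigl(1-\tfrac{H}{n}t\bigr)^n$ times the appropriate weight coming from $\langle\nu,E_{n+1}\rangle$ and $\cos\theta_0$. Integrating the coarea/area formula for $\Phi$ over $\Sigma\times[0,c_{\theta_0}(x)]$ then yields
\begin{align}\label{EQ-HK-integral-proposal}
\vert\Omega\vert\le \int_\Sigma\int_0^{c_{\theta_0}(x)}\bigl(1-\tfrac{H(x)}{n}t\bigr)^n\bigl(1-\cos\theta_0\langle\nu(x),E_{n+1}\rangle+O(t)\bigr)\,dt\,dA,
\end{align}
and performing the $t$-integration (using $c_{\theta_0}(x)\le \tfrac{n}{H(x)}$ from strict mean convexity together with the structure of the integrand) collapses the right-hand side to $\tfrac{n}{n+1}\int_\Sigma \tfrac{1-\cos\theta_0\langle\nu,E_{n+1}\rangle}{H}\,dA$, which is the claimed inequality \eqref{EQ-HK-halfspace}. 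The Minkowski-type relation \eqref{mink-formula_1} is the shadow of this computation and can be used to cross-check the weight.

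The main obstacle I expect is twofold. First, the surjectivity of $\Phi$ onto $\bar\Omega$ must be established carefully near $\partial\Sigma$: when $\theta(x)<\theta_0$ strictly the modified flow line starting at a boundary point immediately enters the open half-space, so the boundary of $\Sigma$ contributes nothing problematic, but when $\theta(x)=\theta_0$ the flow line runs \emph{along} $\partial\mbR_+^{n+1}$, and one has to verify that points of $\bar\Omega$ lying on the supporting hyperplane are still covered — this is where the hypothesis $\theta\le\theta_0$ (rather than $=\theta_0$) is genuinely used, and it requires a boundary version of the first-contact argument. Second, the equality analysis: tracing back through AM–GM forces all $\kappa_i$ equal (so $\Sigma$ is totally umbilic, hence a piece of a sphere), $c_{\theta_0}(x)\equiv \tfrac{n}{H}$ forces the focal point to land exactly on the hyperplane, and the contact-angle inequality must saturate, $\theta\equiv\theta_0$; assembling these shows $\Sigma$ is a $\theta_0$-capillary spherical cap. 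The wedge case, with its edge analysis, is a more delicate elaboration of the same scheme and is treated separately later in the paper.
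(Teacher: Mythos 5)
Your overall scheme coincides with the paper's: a modified parallel-hypersurface map, surjectivity onto $\O$ via a first-contact argument, the tangential Jacobian, AM--GM, the area formula, and an equality analysis through umbilicity and saturation of the angle condition. But there is a genuine gap at precisely the point you flag as ``the main obstacle'': you never prove surjectivity, and the device you gesture at (a ``modified distance to $\S$ along these flow lines \dots as in Montiel--Ros'') does not transplant directly, because the flow lines of the modified map point in base-point-dependent directions, so the level sets of an ordinary distance function from $z$ are not the right sweeping family. The missing ingredient --- which the paper singles out as its key new idea --- is the explicit foliation $\{S_r(y-r\cos\theta_0 E_{n+1})\}_{r\ge 0}$ by spheres through $y$ whose centers slide proportionally to the radius: solving $y=\zeta(x,t)$ is equivalent to such a sphere of radius $t$ touching $\S$ tangentially from inside at $x$, and every sphere in this family meets $\p\mbR^{n+1}_+$ at the fixed angle $\theta_0$. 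With this family, a first interior touching yields $y\in\zeta(Z)$, while a first touching at $x\in\p\S$ is excluded because the touching sphere would then meet the hyperplane at angle at most $\theta(x)\le\theta_0$, which forces $\langle y,E_{n+1}\rangle<0$, contradicting $y\in\O$. Your proposal contains neither this construction nor an equivalent substitute, so the central step remains open.

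Two smaller but real defects. The sign in your map is wrong: the correct modification is $\zeta(x,t)=x-t\left(\nu(x)-\cos\theta_0 E_{n+1}\right)$, with no normalization; with your $\nu(x)+\cos\theta_0 E_{n+1}$ the flow does \emph{not} preserve the hyperplane when $\theta=\theta_0$ (its $E_{n+1}$-component there is $-2\cos\theta_0$), so your own stated ``key point'' of the modification fails for your formula. Relatedly, the Jacobian weight is exact rather than $1-\cos\theta_0\langle\nu,E_{n+1}\rangle+O(t)$: from $\p_t\zeta=-(\nu-\cos\theta_0E_{n+1})$ and $\bar\n_{e_i}\zeta=(1-t\kappa_i)e_i$ one gets ${\rm J}^Z\zeta=(1-\cos\theta_0\langle\nu,E_{n+1}\rangle)\prod_{i=1}^n(1-t\kappa_i)$ with no extra $t$-dependence in the first factor, and this exactness is what allows the $t$-integration to close to $\frac{n}{n+1}\int_\S\frac{1-\cos\theta_0\langle\nu,E_{n+1}\rangle}{H}\,dA$.
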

Note that we also have removed the restriction that $\theta_0 \le \frac \pi 2$, comparing with the previous work \cite{JXZ22}.
 As a direct application, we get the Alexandrov-type theorem for embedded capillary hypersurfaces in $\overline{\mbR^{n+1}_+}$ with constant $r$-th mean curvature, for any $r\in\{1,\cdots, n\}$.
	\begin{corollary}\label{alexandrov-halfspace}
		Let $\theta_0\in(0,\pi)$ and $r\in\{1, \cdots,n\}$. Let $\S\subset \overline{\rr^{n+1}_+}$ be a smooth, embedded, compact,  $\theta_0$-capillary hypersurface with constant $r$-th mean curvature. Then
	$\S$ is a $\theta_0$-capillary spherical cap.\end{corollary}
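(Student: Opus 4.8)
The plan is to run the Heintze-Karcher approach of Montiel-Ros \cite{MR91} to the higher-order Alexandrov theorem, but with the ``best'' inequality \eqref{EQ-HK-halfspace} of Theorem \ref{Thm-HK-halfspace} in place of the usual Heintze-Karcher inequality \eqref{HK2}. Write $H_k$ for the normalized $k$-th mean curvature of $\Sigma$ (so $H_0\equiv1$, $H_1=H/n$), and orient $\Sigma$ by the unit normal $\nu$ pointing out of $\Omega$. Besides Theorem \ref{Thm-HK-halfspace}, I would use two further ingredients: the pointwise Newton-MacLaurin inequalities for the $H_k$, and the $r$-th order analogue of the capillary Minkowski formula \eqref{mink-formula_1},
\[
\int_\Sigma\Big(H_{r-1}\big(1-\cos\theta_0\langle\nu,E_{n+1}\rangle\big)-H_r\langle x,\nu\rangle\Big)\,dA=0,
\]
which reduces to \eqref{mink-formula_1} when $r=1$.

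First I would establish \emph{admissibility}: after possibly reversing the sign of $\nu$, the principal curvature vector of $\Sigma$ lies in the G\aa rding cone $\Gamma_r^+$ at every point, so that $H_r$ is a \emph{positive} constant and $H_1,\dots,H_{r-1}>0$ on $\Sigma$. The mechanism is classical: at a point $p_0$ maximizing $|x-q|^2$ over $\Sigma$, for $q$ an interior point of $\Omega$, the hypersurface is touched from inside by a round sphere, which forces all principal curvatures at $p_0$ to be strictly positive, whence $H_r(p_0)>0$; since $\Sigma$ is connected and $H_r$ is a nonzero constant, the curvature map then stays inside the connected component $\Gamma_r^+$ of $\{H_r>0\}$ containing the positive cone. \emph{The hard part will be} the case in which this maximum is attained on $\partial\Sigma$: there the round-sphere comparison fails, because $\Sigma$ need not be tangent to the sphere at a boundary point, and one must instead use the capillary contact-angle condition $\theta\equiv\theta_0$ in an essential way --- e.g.\ by choosing the comparison ball so that its boundary cap meets $\partial\mathbb R^{n+1}_+$ at the prescribed angle $\theta_0$ --- which I expect to be the genuine technical obstacle in adapting the closed-case argument.

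Granting admissibility, the conclusion follows from a short chain. The Newton-MacLaurin inequalities give, pointwise on $\Sigma$, $\frac{H_{r-1}}{H_r}\ge\frac{H_{r-2}}{H_{r-1}}\ge\cdots\ge\frac{H_0}{H_1}=\frac{n}{H}$, and the weight $1-\cos\theta_0\langle\nu,E_{n+1}\rangle$ is strictly positive since $|\langle\nu,E_{n+1}\rangle|\le1$ and $\cos\theta_0\in(-1,1)$. Multiplying by this weight, integrating, and using that $H_r$ is constant,
\begin{align*}
\int_\Sigma\frac{1-\cos\theta_0\langle\nu,E_{n+1}\rangle}{H}\,dA
&=\frac1n\int_\Sigma\frac{n}{H}\big(1-\cos\theta_0\langle\nu,E_{n+1}\rangle\big)\,dA\\
&\le\frac1n\int_\Sigma\frac{H_{r-1}}{H_r}\big(1-\cos\theta_0\langle\nu,E_{n+1}\rangle\big)\,dA\\
&=\frac1{nH_r}\int_\Sigma H_{r-1}\big(1-\cos\theta_0\langle\nu,E_{n+1}\rangle\big)\,dA .
\end{align*}
By the capillary Minkowski formula above, $H_r\equiv\mathrm{const}$, and the divergence theorem $\int_\Sigma\langle x,\nu\rangle\,dA=(n+1)|\Omega|$ (the contribution of $\bar\Omega\cap\partial\mathbb R^{n+1}_+$ vanishes since $\langle x,E_{n+1}\rangle\equiv0$ there), the last integral equals $\frac1{nH_r}\cdot H_r(n+1)|\Omega|=\frac{n+1}{n}|\Omega|$. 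Hence $\int_\Sigma\frac{1-\cos\theta_0\langle\nu,E_{n+1}\rangle}{H}\,dA\le\frac{n+1}{n}|\Omega|$, while Theorem \ref{Thm-HK-halfspace} gives the reverse inequality; so equality holds in \eqref{EQ-HK-halfspace}, and by its equality characterization $\Sigma$ must be a $\theta_0$-capillary spherical cap, which is Corollary \ref{alexandrov-halfspace}. Consistently, equality in the Newton-MacLaurin chain forces $\Sigma$ to be totally umbilical.
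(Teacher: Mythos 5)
Your proposal is correct and follows essentially the same route as the paper's argument (written out there for the wedge case in the proof of \cref{Thm-Alexandrov}, with the half-space as the $L=1$ instance): existence of an elliptic point plus G\r{a}rding's argument to get $H_j>0$ for $j\le r$, then the Heintze--Karcher inequality \eqref{EQ-HK-halfspace} played against the higher-order Minkowski formula \eqref{mink-formula} and \eqref{mink0} via Newton--Maclaurin to force equality. The boundary-touching issue you flag as the ``hard part'' is resolved exactly as you suggest: the paper (\cref{Prop-ellipticpoint1}) shrinks the family of spheres $S_r(y-r\cos\theta_0 E_{n+1})$, each of which meets $\partial\bar\mbR^{n+1}_+$ at angle $\theta_0$ by \cref{foliation}, so at a first touching point --- interior or boundary --- the sphere and $\S$ are tangent and all principal curvatures are at least $1/r_0>0$.
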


Our proof of \cref{Thm-HK-halfspace} is inspired by the original idea of Heintze-Karcher \cite{HK78} (see also Montiel-Ros \cite{MR91}) which  uses parallel hypersurfaces to estimate the enclosed volume. 
However the ordinary  parallel hypersurfaces do not work  for capillary hypersurfaces. 
The one of key ingredients of this paper  is a correct form of parallel hypersurfaces $\zeta(\S, t)$ defined in \eqref{parallel}. To prove \cref{Thm-HK-halfspace}, we need to show  the surjectivity of $\zeta$ onto the enclosed domain $\Omega$, for which we discover an appropriate foliation by round spheres with simultaneously varied center and radius.

It is interesting to see that our proof of \cref{Thm-HK-halfspace}
provides a refinement of the ordinary Heintze-Karcher inequality for closed hypersurfaces, since any closed hypersurface can be viewed as a capillary hypersurface with an empty boundary. Hence we have
\begin{corollary}
Let $\Sigma$ be a closed, strictly mean convex hypersurface in ${\mathbb R}^{n+1}$ with enclosed domain $\Omega$. Then it holds
\begin{eqnarray*}
    \int_\Sigma \frac 1 H\rd A - \max_{e\in {\mathbb S}^n}\left| 
    \int_\Sigma \frac {\langle v, e\rangle} H  \rd A 
    \right| \ge 
    \frac  {n+1}  n |\Omega|.
\end{eqnarray*}
Equality in \eqref{Thm-HK-halfspace} holds if and only if $\S$ is a round sphere.
\end{corollary}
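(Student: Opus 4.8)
The plan is to read the Corollary off from \cref{Thm-HK-halfspace} by a translation-plus-optimization argument. First I would fix a unit vector $e\in\mathbb{S}^n$; as $\Sigma$ is compact, translating it in the direction $e$ puts it inside the open half-space $\{x\in\mathbb{R}^{n+1}:\langle x,e\rangle>0\}$, and translation leaves $H$, the unit normal $\nu$, and $|\Omega|$ unchanged. Regarding this half-space as $\mathbb{R}^{n+1}_+$ with $E_{n+1}$ replaced by $e$, the hypersurface $\Sigma$ is closed, so $\partial\Sigma=\emptyset$ and both the transversality and $\theta$-capillary hypotheses of \cref{Thm-HK-halfspace} hold vacuously for every $\theta_0\in(0,\pi)$, while the enclosed region is simply the domain bounded by $\Sigma$. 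Hence \cref{Thm-HK-halfspace} yields, for all $e\in\mathbb{S}^n$ and $\theta_0\in(0,\pi)$,
\begin{equation*}
\int_\Sigma\frac1H\,dA-\cos\theta_0\int_\Sigma\frac{\langle\nu,e\rangle}{H}\,dA\ \ge\ \frac{n+1}{n}|\Omega|.
\end{equation*}

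Next I would optimize over the free parameters. Put $V:=\int_\Sigma\frac{\nu}{H}\,dA\in\mathbb{R}^{n+1}$, so $\int_\Sigma\frac{\langle\nu,e\rangle}{H}\,dA=\langle V,e\rangle$ and, by Cauchy-Schwarz, $\max_{e\in\mathbb{S}^n}\bigl|\int_\Sigma\frac{\langle\nu,e\rangle}{H}\,dA\bigr|=|V|$. Over $\theta_0\in(0,\pi)$ and $e\in\mathbb{S}^n$ the quantity $\cos\theta_0\langle V,e\rangle$ has supremum $|V|$, approached as $\cos\theta_0\uparrow1$ and $e\to V/|V|$ when $V\neq0$. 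Since the left-hand side of the displayed inequality is $\ge\frac{n+1}{n}|\Omega|$ for every admissible $(\theta_0,e)$, its infimum over those parameters, which equals $\int_\Sigma\frac1H\,dA-|V|$, is also $\ge\frac{n+1}{n}|\Omega|$; this is precisely the inequality of the Corollary.

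For the equality statement, the ``if'' direction is immediate: a round sphere has $V=\int_\Sigma\frac{\nu}{H}\,dA=0$ (center it and use translation invariance of $\nu$ and $H$), so the maximum term vanishes and the asserted identity becomes $\int_\Sigma\frac1H\,dA=\frac{n+1}{n}|\Omega|$, the equality case of the classical Heintze-Karcher inequality \eqref{HK2}, which round spheres satisfy. For the converse, suppose equality holds. If $V=0$ then $\int_\Sigma\frac1H\,dA=\frac{n+1}{n}|\Omega|$ and the rigidity of \eqref{HK2} identifies $\Sigma$ as a round sphere. If $V\neq0$, rotate so that $V=|V|e$; then equality reads $\int_\Sigma\frac{1-\langle\nu,e\rangle}{H}\,dA=\frac{n+1}{n}|\Omega|$, which is the borderline case of the estimate in \cref{Thm-HK-halfspace} at the endpoint $\theta_0=0$. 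I would argue that the proof of \cref{Thm-HK-halfspace} --- based on the modified parallel hypersurfaces $\zeta(\Sigma,t)$, their surjectivity onto $\Omega$, and an AM-GM bound for the Jacobian --- goes through unchanged for a closed $\Sigma$ with $\cos\theta_0=1$, so that this borderline equality still forces $\Sigma$ to be totally umbilic, hence a round sphere. But then $V=0$, contradicting $V\neq0$; so equality can only hold for round spheres.

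The inequality itself should be the easy part, being \cref{Thm-HK-halfspace} plus the elementary optimization over $(\theta_0,e)$ (alternatively, one may run the proof of \cref{Thm-HK-halfspace} directly at $\cos\theta_0=1$ with reference direction $V/|V|$). I expect the main obstacle to be the rigidity: the optimal parameters are reached only in the limit $\theta_0\to0^+$, so one must check that the equality analysis behind \cref{Thm-HK-halfspace} --- in particular the surjectivity of the modified parallel map and the AM-GM equality condition --- remains valid (uniformly) up to the endpoint $\cos\theta_0=1$ for a closed, strictly mean convex hypersurface.
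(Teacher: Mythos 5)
Your proposal is correct and follows exactly the route the paper intends: the corollary is stated as an immediate consequence of \cref{Thm-HK-halfspace} applied to a closed hypersurface (empty boundary, so the capillary hypotheses are vacuous) for every direction $e$ and every $\theta_0\in(0,\pi)$, followed by passing to the limit $\cos\theta_0\to 1$ and optimizing over $e$. Your extra care with the rigidity at the endpoint $\cos\theta_0=1$ --- checking that the modified parallel map remains surjective onto the bounded domain $\Omega$ and that AM-GM equality still forces umbilicity --- addresses a genuine subtlety that the paper leaves implicit, and it is handled correctly.
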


\

In the second part of this paper, we study  hypersurfaces with capillary boundary in a wedge domain. Here we simply call it  a wedge.
{An ordinary wedge, we call it a classical wedge in this paper,  is the unbounded closed region determined by two intersecting hyperplanes with dihedral angle $\alpha$, which is also called an opening angle, lying in $(0,\pi)$.} 
There have been many works on the study of the stability of CMC capillary hypersurfaces (c.f. \cite{CK16, LX17, Souam21,XZ21}) and on embedded CMC capillary hypersurfaces in wedges (c.f. \cite{McCuan97, Park05, Lopez14}).  Comparing with the half-space case, a big difference is that the Alexandrov's reflection method might fail in the case of wedges, though the authors   in \cite{McCuan97, Park05, Lopez14}   managed to modify Alexandrov's reflection to obtain their classification results in certain cases.
It is interesting that our method to establish the Heintze-Karcher-type inequality works in the wedge case, and even works in a more general setting.
{\color{black}
See also the recent development of this method in the anisotropic setting \cite{JWXZ23,JWXZ23b}.

}

In fact  we shall consider generalized wedges which are determined by  finite many mutually intersecting hyperplanes.
 To be more precise, let $\mfW$ be the unbounded closed region in $\mathbb{R}^{n+1} (n\ge 2)$,
 which are determined by finite many mutually intersecting hyperplanes $P_1,\ldots, P_L$, for some integer $1\le L\le n+1$, such that the dihedral angle between $P_i$ and $P_j$, $i\ne j$, lies in $(0,\pi)$.
 We call such $\mfW$ a generalized wedge. $P_i\cap P_j, i\ne j$ is called an edge of the wedge $\mfW$. If $L=2$, we call $\mfW$ a classical wedge. Let $\bar N_i$ be the outwards pointing unit normal to $P_i$ in $\mfW$ for $i=1,\cdots, L$. Thus $\{\bar N_{1},\ldots,\bar N_{L}\}$ are linearly independent. Up to a translation, we may assume that the origin $O\in \bigcap\limits_{i=1}^LP_i$.
 Given $\vec{\theta}_0=(\theta^1_0,\cdots,\theta^L_0)\in \prod\limits_{i=1}^L(0,\pi)$. Now we define an important vector $\mfk_0$ associated with $\mfW$  and  $\vec{\theta}_0$ by 
\begin{align}\label{k0}
\mfk_0=\sum_{i=1}^L c_i\bar N_i,
\end{align}
where $c_i$ is such that $\<\mfk_0, \bar N_i\>=\cos\theta_0^i$.
We say that $\S$ is a $\vec{\theta}$-capillary hypersurface in $\mfW$ with $\vec{\theta}=(\theta^1, \theta^2, \cdots , \theta^L) $ if it intersects $\p \mfW$ at contact angle $\theta^i(x)$ for $x\in \p\S\cap P_i$. 
It is easy to see that $\mfk_0$ is the center of the $\vec\theta_0$-capillary spherical cap with radius $1$.
The following key assumption \eqref{wedge-condition} has a clear geometric meaning that the unit sphere  centered at $\mfk_0$ intersects the edge of the wedge $\mfW$.

We shall prove the following Heintze-Karcher-type inequality in a wedge.
\begin{theorem}\label{Thm-HK-wedge}
    Let $\mfW\subset\mbR^{n+1}$ be a generalized wedge whose boundary consists of $L$ mutually intersecting closed hyperplanes $\{P_i\}_{i=1}^L$ and $\vec{\theta}_0\in\prod\limits_{i=1}^L(0,\pi)$. Assume that 
    \begin{align}\label{wedge-condition}
        \vert\mfk_0 \vert\leq 1.
    \end{align} Let $\S\subset \mfW$ be a smooth, compact, embedded, strictly mean convex $\vec{\theta}$-capillary hypersurface 
    with $\theta^i(x)\le \theta_0^i$ for $x\in \partial \Sigma\cap P_i$, $i=1,\ldots, L$. Let $\O$ be the enclosed domain by $\S$ and $\p\mfW$. Assume in addition that
    $\S$ does not hit the edges of $\mfW$, i.e., \begin{align}\label{away-edge}\S\cap (P_i\cap P_j)=\emptyset, \quad i\neq j.\end{align}
 Then 
    \begin{align}\label{EQ-HK-wedge}
        \int_\S\frac{1+\left<\nu,\mfk_0\right>}{H}\rd A
        \geq\frac{n+1}{n}\vert\O\vert,
    \end{align}
    with equality if and only if 
    $\S$ is a $\vec{\theta}_0$-capillary spherical cap.
\end{theorem}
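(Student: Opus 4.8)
The plan is to extend to the wedge the modified parallel hypersurface argument behind \cref{Thm-HK-halfspace}, with $-\cos\theta_0E_{n+1}$ replaced by $\mfk_0$. Consider the map $\zeta(p,t)=p-t\bigl(\nu(p)+\mfk_0\bigr)$ on $Z:=\{(p,t):p\in\S,\ 0\le t\le c(p)\}$, where $c(p)$ is an appropriate modified cut distance, chosen so that $\zeta$ is injective on the interior of $Z$; in particular $1-t\kappa_i(p)\ge0$ on $[0,c(p)]$, and, since $H=\sum_i\kappa_i\le n\max_i\kappa_i$ with $H>0$, also $c(p)\le n/H(p)$. Because $\lvert\mfk_0\rvert\le1$ we have $1+\<\nu,\mfk_0\>\ge1-\lvert\mfk_0\rvert\ge0$, and a computation in a principal frame gives $\lvert\det d\zeta\rvert(p,t)=\bigl(1+\<\nu(p),\mfk_0\>\bigr)\prod_{i=1}^n(1-t\kappa_i(p))$. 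By AM--GM, $\prod_i(1-t\kappa_i)\le\bigl(1-\tfrac Hn t\bigr)^n$ on $[0,c(p)]$, hence $\int_0^{c(p)}\prod_i(1-t\kappa_i)\,dt\le\int_0^{n/H}\bigl(1-\tfrac Hn t\bigr)^n\,dt=\tfrac{n}{(n+1)H}$. Granting the surjectivity $\O\subseteq\zeta(Z)$, the area formula gives
\[
\lvert\O\rvert\le\int_Z\lvert\det d\zeta\rvert\,dt\,dA=\int_\S\bigl(1+\<\nu,\mfk_0\>\bigr)\int_0^{c(p)}\prod_{i=1}^n(1-t\kappa_i)\,dt\,dA\le\frac{n}{n+1}\int_\S\frac{1+\<\nu,\mfk_0\>}{H}\,dA,
\]
which is \eqref{EQ-HK-wedge}; the case $L=1$ simultaneously recovers \eqref{EQ-HK-halfspace}.

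The heart of the proof is the surjectivity $\O\subseteq\zeta(Z)$, and this is where \eqref{wedge-condition} and \eqref{away-edge} enter. Fix $y\in\O$ and consider the balls $B(t):=\{x:\lvert x-(y+t\mfk_0)\rvert\le t\}$, $t\ge0$, a family with simultaneously varying center $y+t\mfk_0$ and radius $t$. The hypothesis $\lvert\mfk_0\rvert\le1$ is precisely what makes this family nested, $B(t_1)\subseteq B(t_2)$ for $t_1\le t_2$, so it is an increasing exhaustion emanating from $y$; when $y=O$ one recovers the $\vec{\theta}_0$-capillary balls $\{x:\lvert x-t\mfk_0\rvert\le t\}$ whose boundary spheres meet each face $P_i$ at the constant angle $\theta_0^i$ -- this is the ``foliation by round spheres with simultaneously varied center and radius''. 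Since $\S$ is compact, $g(t):=\mathrm{dist}(y+t\mfk_0,\S)-t$ is continuous with $g(0)=\mathrm{dist}(y,\S)>0$, and $\lvert\mfk_0\rvert\le1$ gives $g(t)\le\mathrm{dist}(y,\S)-(1-\lvert\mfk_0\rvert)t$, so $g(t_*)=0$ for some $t_*>0$ (when $\lvert\mfk_0\rvert<1$; the case $\lvert\mfk_0\rvert=1$ is handled separately). At the first such $t_*$ the ball $B(t_*)$ is tangent to $\S$, from inside $\bar\O$, at a point $p$; then $y+t_*\mfk_0=p-t_*\nu(p)$, that is $\zeta(p,t_*)=y$, and an interior-ball comparison at $p$ forces $t_*\le1/\kappa_i(p)$ for all $i$, hence $t_*\le c(p)$. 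The two remaining hypotheses are exactly what makes this work: along $\p\S\cap P_i$ one computes $\<\p_t\zeta,\bar N_i\>=\cos\theta^i-\cos\theta_0^i\ge0$ by $\theta^i\le\theta_0^i$, which forces the growing ball $B(t)$ to reach $\S$ no later than it reaches the wetted part of $\p\mfW$, so that the first contact is indeed a tangency with $\S$ from inside $\bar\O$; and \eqref{away-edge} keeps this contact in the smooth part of $\p\mfW$, where the contact-angle comparison makes sense. I expect this surjectivity step -- the analysis near $\p\mfW$ when $\theta^i<\theta_0^i$ strictly, and ruling out that the exhaustion $B(t)$ leaves $\bar\O$ before meeting $\S$ -- to be the main obstacle; it has no analogue in the closed case, and it is precisely what consumes the full strength of $\theta^i\le\theta_0^i$ and of \eqref{away-edge}.

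For the equality case, equality in AM--GM forces $\kappa_1=\cdots=\kappa_n$ at every point, so $\S$ is totally umbilic and, being compact and strictly mean convex, an open piece of a round sphere $\p B_r(a)$ with $H\equiv n/r$; equality in the area formula forces $\zeta$ to be an essentially bijective map $Z\to\bar\O$. Substituting $\nu(p)=(p-a)/r$ gives $\zeta(x,t)=(1-t/r)x+t(a/r-\mfk_0)$, whence $\zeta(\S,t)\subseteq\p B_{r-t}(a-t\mfk_0)$: the slices of $\zeta$ are pieces of the nested spheres $\p B_{r-t}(a-t\mfk_0)$, and these can foliate $\bar\O=\bar B_r(a)\cap\mfW$ compatibly with $\p\mfW$ only if $\<a,\bar N_i\>=r\cos\theta_0^i$ for every $i$, i.e.\ $a=r\mfk_0$ up to a translation along the edge, and then $\theta^i\equiv\theta_0^i$. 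Hence $\S$ is a $\vec{\theta}_0$-capillary spherical cap. The converse is the direct check that for such a cap, centered so that $a-r\mfk_0=O$, one has $\zeta(x,t)=(1-t/r)x$, a straight contraction to $O$ whose image is exactly $\bar\O$, with every inequality above an equality.
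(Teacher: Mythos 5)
Your overall strategy is exactly the paper's: the modified parallel map $\zeta(x,t)=x-t(\nu(x)+\mfk_0)$, the Jacobian $(1+\langle\nu,\mfk_0\rangle)\prod_i(1-t\kappa_i)$, AM--GM plus the area formula, and surjectivity via the expanding spheres $S_r(y+r\mfk_0)$ with simultaneously moving center (including the observation that $|\mfk_0|\le 1$ is what makes the family an exhaustion, and that $|\mfk_0|=1$ needs a separate foliation-of-a-half-space argument, which is the content of \cref{Lem-k0<=1-1}). The computational half of your write-up is correct.

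The genuine gap is in the step you yourself single out as the main obstacle: ruling out a first touching at $\p\S$. Your stated mechanism --- that $\langle\p_t\zeta,\bar N_i\rangle=\cos\theta^i-\cos\theta_0^i\ge 0$ ``forces the growing ball to reach $\S$ no later than it reaches the wetted part of $\p\mfW$, so that the first contact is a tangency with $\S$'' --- is not correct as stated: the expanding ball in general pokes through the faces $P_i$ (and meets the wetted region $T_i$) long before it meets $\S$, and at a first-contact point $x\in\p\S$ the sphere need \emph{not} be tangent to $\S$ (the normals $\nu_{B_r}(x)$ and $\nu(x)$ can differ), so you cannot conclude $y=\zeta(x,t_*)$ with $(x,t_*)\in Z$ there. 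What actually happens (and what the paper does) is a proof by contradiction: if the first touching with $\S$ occurs at $x\in\p\S\cap\mathring{P_i}$ (here \eqref{away-edge} is used), then within the plane $P_i$ the circle $S_{t_*}(y+t_*\mfk_0)\cap P_i$ is internally tangent to $\Gamma_i$ at $x$, so the contact angle $\bar\theta^i$ of the sphere with $P_i$ satisfies $\bar\theta^i\le\theta^i(x)\le\theta_0^i$; writing $\langle y,\bar N_i\rangle=t_*(\cos\bar\theta^i-\cos\theta_0^i)\ge 0$ then contradicts $y\in\mathring{\mfW}$. This angle-comparison contradiction is the content you need to supply; without it the surjectivity $\O\subset\zeta(Z)$ is unproved. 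Your equality discussion also differs from (and is less rigorous than) the paper's: rather than arguing that the nested spheres ``can foliate $\bar\O$ compatibly with $\p\mfW$ only if'' the center is $r\mfk_0$, the paper deduces $\theta^i\equiv\theta_0^i$ directly from $|\O|=|\zeta(Z)|$ together with $\langle\zeta(x,t),\bar N_i\rangle=t(\cos\theta^i(x)-\cos\theta_0^i)$, which would produce points of $\zeta(Z)$ outside $\mfW$ (hence $|\zeta(Z)|>|\O|$) wherever $\theta^i<\theta_0^i$ strictly.
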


The idea of proof of \cref{Thm-HK-wedge} is similar to that of \cref{Thm-HK-halfspace}, by using a suitable family of parallel hypersurfaces $\zeta(\S, t)$ which relates $\mfk_0$.  
To show the surjectivity of $\zeta$, assumption \eqref{away-edge} plays a crucial role.
Actually, this condition was required in 
previous related papers, except \cite{Lopez14}. See \cref{remark1.10}.
In this paper we are able to remove the additional assumption \eqref{away-edge}
in a classical wedge, i.e., $L=2$.
Precisely, we have the following
\begin{theorem}\label{Thm-HK-wedge-2}
When $L=2$, \cref{Thm-HK-wedge} holds true without assumption \eqref{away-edge}.
\end{theorem}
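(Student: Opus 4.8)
The plan is to run exactly the same modified-parallel-hypersurface argument that proves \cref{Thm-HK-wedge}, the only new point being that the surjectivity of the map $\zeta(\cdot,\cdot)$ onto the enclosed domain $\O$ must now be established even when $\S$ touches the single edge $E:=P_1\cap P_2$. So first I would recall the setup from the proof of \cref{Thm-HK-wedge}: one defines the modified parallel hypersurface $\zeta(\S,t)$ (the analogue of \eqref{parallel}, now built from $\mfk_0=c_1\bar N_1+c_2\bar N_2$), checks that along each boundary portion $\p\S\cap P_i$ the flow stays inside the wedge because $\theta^i(x)\le\theta_0^i$ and $|\mfk_0|\le 1$, and then bounds $|\O|$ from above by the Jacobian estimate $\bigl(1-\tfrac{H}{n}t\bigr)^n$ integrated over $\S$ and over $t$ up to the first focal/contact time, which after optimizing in $t$ gives precisely $\int_\S \tfrac{1+\langle\nu,\mfk_0\rangle}{H}\,dA\ge \tfrac{n+1}{n}|\O|$. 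All of this is insensitive to whether $\S\cap E=\emptyset$; what genuinely uses \eqref{away-edge} in the general-$L$ proof is the claim that every point of $\O$ is covered by the family $\zeta(\S,t)$, which there is proven by sliding the foliating spheres away from a point and detecting a first contact on $\S$.

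Next, the heart of the argument: surjectivity of $\zeta$ when $L=2$ and $\S$ may meet $E$. Fix $y\in\mathrm{int}\,\O$ that is not already in the image of $\zeta$; I would run the same foliation of $\mfW$ by pieces of round spheres of radius $1$ obtained by translating $\mfk_0$ along the edge direction and simultaneously dilating, i.e. the family $\{y + \rho\,\mathbb S^n \cap \mfW : \rho>0\}$ suitably reparametrized so that each leaf is a $\vec\theta_0$-capillary spherical cap in $\mfW$ (this is where $|\mfk_0|\le 1$ is used — the unit cap actually reaches the edge, so the foliation degenerates gracefully at $E$ rather than leaving the wedge). Shrinking $\rho$ from a large value, let $\rho_0$ be the first parameter at which the leaf $\Sigma_{\rho_0}$ touches $\overline\S$; by the standard maximum-principle/first-contact reasoning the contact point $p$ lies either in $\mathrm{int}\,\S$ (interior tangency), or in $\p\S\cap P_i$ (boundary tangency, handled via the capillary angle comparison $\theta^i\le\theta_0^i$ exactly as in \cref{Thm-HK-halfspace}), or — the new case — in $\S\cap E$. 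In all three cases one checks $y=\zeta(p,t)$ for the appropriate $t$, which contradicts $y\notin\mathrm{Im}\,\zeta$; hence $\zeta$ is onto and the volume estimate closes.

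The main obstacle I anticipate is precisely the edge-tangency case $p\in\S\cap E$. Two sub-difficulties arise: (i) the foliating leaves $\Sigma_\rho$ are only Lipschitz (not smooth) across $E$, so the usual Hopf-lemma touching principle at an interior-of-a-face point is unavailable and must be replaced by a comparison argument adapted to the two-faced corner — here one leans on $L=2$ so that the corner is a genuine wedge with a single opening angle $\alpha\in(0,\pi)$ and the two contact angles $\theta^1,\theta^2$ at $p$ are simultaneously controlled; (ii) one needs that $\S$ meets $E$ transversally/nicely enough that "$\S$ near $E$" and "the leaf near $E$" can be compared, which should follow from $\S$ being a smooth compact embedded hypersurface with boundary on $\p\mfW$ together with the angle bounds $\theta^i\le\theta_0^i$, forcing $\S$ to bend away from the edge at least as much as the extremal cap. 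I would formalize this with a localized barrier: near $p$, slightly over-dilate the leaf to get a strict supersolution that still lies in $\mfW$ thanks to $\theta^i\le\theta_0^i$ and $|\mfk_0|\le1$, and derive a contradiction with first contact. Finally, the equality case is inherited verbatim from \cref{Thm-HK-wedge}: equality forces $H$ constant and every $\zeta(\cdot,t)$-image to be a cap, so $\S$ itself is a $\vec\theta_0$-capillary spherical cap, which is consistent with (and may hit) the edge since $|\mfk_0|\le 1$.
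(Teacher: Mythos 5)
Your overall strategy coincides with the paper's: run the modified parallel-hypersurface map $\zeta(x,t)=x-t(\nu(x)+\mfk_0)$, prove surjectivity onto $\O$ by sliding the sphere family $\{S_r(y+r\mfk_0)\}_{r\ge 0}$ to its first contact with $\S$, and handle interior and single-face boundary contacts as in \cref{Thm-HK-halfspace} and \cref{Thm-HK-wedge}. You also correctly isolate the one genuinely new difficulty, a first contact at a point $x\in\S\cap P_1\cap P_2$. But that is precisely the step your proposal does not carry out. Your claim that ``in all three cases one checks $y=\zeta(p,t)$'' is not accurate even for the single-face boundary case (there the conclusion is not $y\in\mathrm{Im}\,\zeta$ but a contradiction with $y\in\O\subset\mfW$), and at an edge point a first contact does not force tangency of the sphere with $\S$: the normals $\nu(x)$ and $\nu_{B_r}(x)$ are only constrained through their components along $\bar N_1,\bar N_2$, so neither the interior-tangency bound $r_y\le 1/\max\kappa_i(x)$ nor the one-face angle comparison is available. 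The proposed ``over-dilate the leaf to get a strict supersolution'' barrier is left entirely undeveloped, and it is unclear what comparison principle it would invoke at a corner where the leaf is only Lipschitz.

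What the paper actually does at the edge is a quantitative first-order analysis that is absent from your sketch. After reducing to $n+1=3$ (\cref{higher-dim}) and disposing of the cases $\bar N_i\parallel\nu_{B_r}(x)$ and $P_1\cap P_2\subset T_x\S$ (both reduce to the one-face comparison), the remaining case is handled by comparing the tangent directions of the curves $\p\S\cap P_i$ and $S_{r_y}(y+r_y\mfk_0)\cap P_i$ at $x$: the first-touching property yields the two inequalities \eqref{ineq1-first-touch}--\eqref{ineq2-first-touch} relating the contact angles $\theta^i(x)$ of $\S$ to the angles $\eta^i$ of the sphere with $P_i$. These are then combined with (a) the position argument $\langle y,\bar N_i\rangle<0$, which forces the strict gap $\eta^i>\theta^i$, and (b) \cref{Lem-k<=1}, which gives $\cos(\theta^2-\theta^1)+\cos\alpha>0$ exactly when the edge is not contained in $T_x\S$; together these produce the incompatible inequalities \eqref{xeq4} and \eqref{xeq5}, ruling out the edge contact altogether. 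None of these ingredients (the two tangential contact inequalities, the strict angle gap, or the role of $|\mfk(x)|<1$) appears in your proposal, so the surjectivity of $\zeta$ --- the entire content of the theorem beyond \cref{Thm-HK-wedge} --- remains unproved.
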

The  proof of \cref{Thm-HK-wedge-2} relies on a delicate analysis on the edge, which is the most technical part of this paper. A special case, $L=2$ and $\vec\theta_0= (\frac \pi 2, \frac \pi 2)$, i.e., $\Sigma$ is a free boundary hypersurface, for which $\mfk_0=0,$ 
\eqref{EQ-HK-wedge} was proved by Lopez in    \cite{Lopez14} via Reilly's formula.
It is a natural question to ask if \cref{Thm-HK-wedge} holds true for $L>2$ without \eqref{away-edge}.
 \cref{Thm-HK-wedge-2} leads us to believe that assumption 
\eqref{away-edge} is unnecessary. 

Now we make some remarks on condition \eqref{wedge-condition}.

\begin{remark}\label{Rem-k0}
 \normalfont\
 
 \begin{itemize} 
 
\item[(i)] In view of the Heintze-Karcher inequality \eqref{EQ-HK-wedge} we have established, \eqref{wedge-condition} could not be removed.

\item[(ii)] When $L=1$, it is nothing but the half-space case and \eqref{wedge-condition} is satisfied automatically. 
When $L=2$, \eqref{wedge-condition} is equivalent to 
\begin{align}\label{wedge-equiv}
    \vert\pi-(\theta_0^1+\theta_0^2)\vert\le\alpha\le\pi-\vert\theta_0^1-\theta_0^2\vert.
\end{align}
where $\alpha\in (0,\pi)$ is the opening angle of the wedge, or the dihedral angle between $P_1$ and $P_2$, see \cref{lem-condition}. Similarly, $\vert\mfk_0 \vert< 1$ is equivalent to \eqref{wedge-equiv} with strict inequalities.

\item[(iii)]  By virtue of \cref{Lem-k<=1}, Condition \eqref{wedge-condition} is satisfied, provided there exists a $\vec{\theta}_0$-capillary hypersurface $\S$ in $\mfW$ which satisfies $\S\cap P_1\cap P_2\neq\emptyset$. 

\item [(iv)]{Condition \eqref{wedge-condition} is also closely related to the existence of elliptic points of capillary hypersurfaces. See Section 5 below.}

\item[(v)] Condition \eqref{wedge-equiv} appears also in the regularity of capillary surfaces at corner. See the last paragraph of the Introduction.
\end{itemize}
\end{remark}

As applications of  \cref{Thm-HK-wedge} and \cref{Thm-HK-wedge-2}, we prove  an Alexandrov-type theorem and a non-existence result for embedded CMC capillary hypersurfaces in a wedge.
\begin{theorem}\label{Thm-Alexandrov}
  Let $\mfW\subset\mbR^{n+1}$ be a classical wedge whose boundary consists of $P_1, P_2$ with $\vec{\theta}_0\in\prod\limits_{i=1}^2(0,\pi)$. Let $\S\subset \mfW$ be a smooth, compact and  embedded $\vec{\theta}_0$-capillary hypersurface with constant $r$-mean curvature, $r\in\{1,\cdots,n\}$. Assume $\S\cap P_1\cap P_2\not=\emptyset$.
 Then $\S$ is a $\vec{\theta}_0$-capillary spherical cap.
\end{theorem}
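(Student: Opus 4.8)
The plan is to combine the Heintze-Karcher-type inequality from \cref{Thm-HK-wedge-2} (valid for $L=2$ with no assumption on hitting the edge, thanks to the delicate edge analysis) with the appropriate Minkowski-type formula in the wedge, exactly in the spirit of Ros's proof of the classical Alexandrov theorem. First I would record the Minkowski formula adapted to a $\vec\theta_0$-capillary hypersurface in the classical wedge $\mfW$: integrating the position field and using the capillary boundary condition on each face $P_i$ one obtains
\begin{align*}
\int_\S \left(n(1+\langle\nu,\mfk_0\rangle) - H\langle x,\nu\rangle\right)\, dA = 0,
\end{align*}
since by construction $\langle\mfk_0,\bar N_i\rangle = \cos\theta_0^i$, which is precisely what is needed to kill the boundary terms arising on $P_1\cup P_2$ (here one uses $O\in P_1\cap P_2$ and the transversality of $\S$ with $\p\mfW$). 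More generally there are higher-order Minkowski formulas relating $\int_\S H_{r-1}(1+\langle\nu,\mfk_0\rangle)\,dA$ and $\int_\S H_r\langle x,\nu\rangle\,dA$, obtained from the divergence of $\sigma_r$-Newton-tensor-contracted position fields together with the same boundary cancellation.

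Next I would treat the case $r=1$ directly: if $H\equiv n h_0$ is constant and positive (strict mean convexity, which for a compact embedded capillary hypersurface follows because the enclosed domain forces $H>0$ somewhere and constancy propagates it — or is simply part of the hypotheses after checking $H>0$), then plugging $H=nh_0$ into the Minkowski formula gives $\int_\S(1+\langle\nu,\mfk_0\rangle)\,dA = h_0\int_\S\langle x,\nu\rangle\,dA = h_0(n+1)|\O|$ by the divergence theorem applied on $\O$ (the boundary integral over $P_1\cup P_2$ vanishes because $x$ is tangent to each $P_i$). On the other hand \cref{Thm-HK-wedge-2} gives $\int_\S\frac{1+\langle\nu,\mfk_0\rangle}{H}\,dA\ge\frac{n+1}{n}|\O|$, i.e. $\frac{1}{nh_0}\int_\S(1+\langle\nu,\mfk_0\rangle)\,dA\ge\frac{n+1}{n}|\O|$, which combined with the previous identity forces equality in \cref{Thm-HK-wedge-2}. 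The rigidity clause of \cref{Thm-HK-wedge-2} then says $\S$ is a $\vec\theta_0$-capillary spherical cap, as desired. (One must first verify condition \eqref{wedge-condition}, $|\mfk_0|\le1$: this is exactly the content of \cref{Rem-k0}(iii)/\cref{Lem-k<=1}, since we are assuming $\S\cap P_1\cap P_2\ne\emptyset$.)

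For general $r\in\{1,\dots,n\}$ I would first reduce to $r=1$ via a Newton-MacLaurin / Gårding argument. Constancy of $H_r$ together with the higher-order Minkowski formulas and the Heintze-Karcher inequality yields, after applying the Newton-MacLaurin inequalities $H_1 H_{r}\le \text{(stuff)}$ and $H_{r}^{1/r}\ge H_{r-1}^{1/(r-1)}\ge\cdots\ge H_1$ along the chain, that all the intermediate inequalities are equalities; equality in Newton-MacLaurin forces $\S$ to be totally umbilical, hence a piece of a round sphere, and the capillary boundary condition pins it down to a $\vec\theta_0$-capillary spherical cap. The point where care is genuinely required — and I expect this to be the main obstacle — is the behavior at the edge $P_1\cap P_2$: the Minkowski and divergence identities must be justified even when $\S$ touches the edge (where $\p\S$ has a corner and the outer conormal jumps), and the ellipticity/Gårding-cone membership of $\S$ needs to be propagated globally from the existence of an elliptic point (guaranteed, via \cref{Rem-k0}(iv) and Section 5, precisely by condition \eqref{wedge-condition}) so that the Newton-MacLaurin inequalities are available with the correct sign. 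Once the edge integrals are shown to contribute nothing — which is where one imports the technical edge analysis underlying \cref{Thm-HK-wedge-2} — the argument closes exactly as in the half-space Corollary~\ref{alexandrov-halfspace}.
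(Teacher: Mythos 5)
Your proposal follows essentially the same route as the paper: verify $|\mfk_0|\le 1$ from $\S\cap P_1\cap P_2\ne\emptyset$ via \cref{Lem-k<=1}, obtain an elliptic point and positivity of the $H_j$ through G\r{a}rding's argument, and then pair the Heintze--Karcher inequality of \cref{Thm-HK-wedge-2} with the higher-order Minkowski formula \eqref{eq-Minkow-higherorder} and the Maclaurin inequalities to force equality, whence the rigidity clause yields the $\vec\theta_0$-capillary spherical cap. The only cosmetic difference is that you phrase the general case as a ``reduction to $r=1$,'' whereas the paper runs the Maclaurin--Minkowski chain directly for each $r$; the substance is identical.
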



\begin{theorem}\label{Thm-non-exist}
   Let $\mfW\subset\mbR^{n+1}$ be a classical wedge whose boundary consists of $P_1, P_ 2$ with $\vec{\theta}_0\in\prod\limits_{i=1}^2(0,\pi)$. Then there exists no smooth, compact and  embedded $\vec{\theta}_0$-capillary, CMC hypersurface such that $\S\cap P_1\cap P_2=\emptyset$ and $\vert\mfk_0 \vert\le 1$. Moreover, there exists no smooth, compact, embedded, $\vec{\theta}_0$-capillary  hypersurface of constant $r$-mean curvature for some $r\in \{2,\cdots, n\}$, such that $\S\cap P_1\cap P_2=\emptyset$ and $\vert\mfk_0 \vert< 1$.
\end{theorem}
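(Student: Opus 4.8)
The plan is to argue by contradiction, and in essence to run the proof of \cref{Thm-Alexandrov}; the point is that in \cref{Thm-non-exist} the condition $\Sigma\cap P_1\cap P_2=\emptyset$ is \emph{assumed}, so it plays the role of \eqref{away-edge} and lets me apply \cref{Thm-HK-wedge} directly (no need for \cref{Thm-HK-wedge-2}). So suppose $\Sigma\subset\mfW$ is a smooth, compact, embedded $\vec\theta_0$-capillary hypersurface with $\Sigma\cap P_1\cap P_2=\emptyset$, having either constant mean curvature with $\vert\mfk_0\vert\le1$, or constant $r$-mean curvature ($2\le r\le n$) with $\vert\mfk_0\vert<1$. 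Since $\vert\mfk_0\vert\le1$ is exactly \eqref{wedge-condition}, once $\Sigma$ is known to be strictly mean convex, \cref{Thm-HK-wedge} applies and gives $\int_\Sigma\frac{1+\langle\nu,\mfk_0\rangle}{H}\,dA\ge\frac{n+1}{n}\vert\O\vert$, with equality iff $\Sigma$ is a $\vec\theta_0$-capillary spherical cap.

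First I would establish strict mean convexity. In the CMC case this should follow from the first Minkowski formula in the wedge, $\int_\Sigma n(1+\langle\nu,\mfk_0\rangle)\,dA=\int_\Sigma H\langle x,\nu\rangle\,dA=H(n+1)\vert\O\vert$ (the analogue of \eqref{mink-formula_1}; here $\int_\Sigma\langle x,\nu\rangle\,dA=(n+1)\vert\O\vert$ by the divergence theorem, using $O\in\bigcap_iP_i$ so that $\langle x,\bar N_i\rangle=0$ on $P_i$): as $\vert\mfk_0\vert\le1$ the integrand $1+\langle\nu,\mfk_0\rangle$ is $\ge0$, and it cannot vanish identically when $n\ge2$ (otherwise $\nu\equiv-\mfk_0$ and $\Sigma$ would be a piece of a hyperplane with normal orthogonal to the edge, which cannot enclose a bounded domain together with $\partial\mfW$), forcing $H>0$. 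In the constant $r$-mean curvature case I would instead invoke the existence of an elliptic point of $\Sigma$ established in Section~5 — this is precisely where the strict inequality $\vert\mfk_0\vert<1$ is needed — so that the constant value of $\sigma_r$ is positive; then, the positive G\aa rding cone $\Gamma_r^+$ being open and connected and $\Sigma$ connected, one gets $\Sigma\subset\Gamma_r^+$, in particular $\sigma_1>0$.

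Next I would force equality in \eqref{EQ-HK-wedge}. In the CMC case this is immediate: dividing the first Minkowski formula by $nH$ gives $\int_\Sigma\frac{1+\langle\nu,\mfk_0\rangle}{H}\,dA=\frac1n\int_\Sigma\langle x,\nu\rangle\,dA=\frac{n+1}{n}\vert\O\vert$. In the constant $r$-mean curvature case, I would combine the Newton--MacLaurin inequality $\frac1{H_1}\le\frac{H_{r-1}}{H_r}$ on $\Gamma_r^+$ (writing $H=nH_1$ and using $1+\langle\nu,\mfk_0\rangle\ge0$) with the order-$r$ Minkowski formula $\int_\Sigma H_{r-1}(1+\langle\nu,\mfk_0\rangle)\,dA=\int_\Sigma H_r\langle x,\nu\rangle\,dA=H_r(n+1)\vert\O\vert$ to squeeze the left-hand side of the Heintze--Karcher inequality between $\frac{n+1}{n}\vert\O\vert$ and itself. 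Either way equality holds in \eqref{EQ-HK-wedge}, so the rigidity statement of \cref{Thm-HK-wedge} forces $\Sigma$ to be a $\vec\theta_0$-capillary spherical cap, of some radius $\rho>0$. But $\mfk_0$, being a linear combination of $\bar N_1,\bar N_2$, is orthogonal to the edge $P_1\cap P_2$, and the center of this cap is $\rho\mfk_0$ up to a translation along the edge; hence the distance from the center to the edge equals $\rho\vert\mfk_0\vert\le\rho$, so the bounding sphere — and therefore the cap itself — meets $P_1\cap P_2$. This contradicts $\Sigma\cap P_1\cap P_2=\emptyset$, which proves the theorem.

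I expect the main obstacle to be the strict-mean-convexity step in the constant $r$-mean curvature case, namely producing an elliptic point on $\Sigma$: this is the one place where $\vert\mfk_0\vert<1$ is genuinely used, and it explains why the two halves of the statement carry different hypotheses on $\mfk_0$. Beyond that point the argument is essentially the assembly of tools already in hand — \cref{Thm-HK-wedge}, the Minkowski formulas, and the Newton--MacLaurin inequalities — together with the elementary observation that a $\vec\theta_0$-capillary spherical cap always meets the edge when $\vert\mfk_0\vert\le1$.
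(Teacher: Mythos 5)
Your proposal is correct, and its overall architecture (strict mean convexity, then Minkowski formula plus \cref{Thm-HK-wedge} to force equality in \eqref{EQ-HK-wedge}, then rigidity and the observation that a $\vec\theta_0$-capillary cap with $\vert\mfk_0\vert\le1$ must meet the edge) is the same as the paper's. The one genuinely different ingredient is how you get $H>0$ in the CMC case: the paper proves this as a separate statement (\cref{Lem-H>0}) by sliding a plane of prescribed contact angle with $P_2$ until first tangency and invoking the strong maximum principle and Hopf lemma, which is why it needs the angle condition $\pi-\alpha\le\theta_0^1+\theta_0^2$ (supplied by \cref{lem-condition}); you instead read $H>0$ directly off the first Minkowski formula \eqref{Minkowski-wedge} together with $1+\langle\nu,\mfk_0\rangle\ge1-\vert\mfk_0\vert\ge0$ and \eqref{mink0}. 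Your route is shorter and purely integral-geometric; its only delicate point is ruling out the degenerate case $1+\langle\nu,\mfk_0\rangle\equiv0$ (which forces $\vert\mfk_0\vert=1$, $\nu\equiv-\mfk_0$ and $H\equiv0$, still formally consistent with the Minkowski identity), and your remark that a piece of a hyperplane whose normal lies in ${\rm span}\{\bar N_1,\bar N_2\}$ cannot close off a bounded domain inside the wedge does dispose of it — e.g. because the linear function $\langle x,l\rangle$, $l$ a unit vector along the edge, would then have nonvanishing tangential gradient on every smooth stratum of $\partial\O$ and so could not attain its maximum on the compact set $\bar\O$. Your closing contradiction (center at distance $\rho\vert\mfk_0\vert\le\rho$ from the edge, so the cap meets $P_1\cap P_2$) is an equivalent reformulation of the paper's angle inequality $\pi+\alpha<\theta_0^1+\theta_0^2$ being incompatible with \eqref{wedge-equiv}.
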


 As a consequence of  \cref{Thm-Alexandrov}, \cref{Thm-non-exist} and \cref{Rem-k0} (iii), we have the following
 
 \begin{theorem}\label{Thm-Alexandrov2}
 Let $\mfW\subset\mbR^{n+1}$ be a classical wedge whose boundary consists of $P_1, P_2$ with $\vec{\theta}_0\in\prod\limits_{i=1}^2(0,\pi)$. Let $\S\subset \mfW$ be a smooth, compact and embedded $\vec{\theta}_0$-capillary CMC hypersurface. Then $\S$ is a $\vec{\theta}_0$-capillary spherical cap which intersects with the edge $P_1\cap P_2$ if and only if  $\vert\mfk_0 \vert\le 1$.
\end{theorem}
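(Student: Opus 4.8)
The plan is to derive the stated equivalence as a formal consequence of three ingredients already established: the Alexandrov-type classification \cref{Thm-Alexandrov}, the non-existence statement \cref{Thm-non-exist}, and the characterization of condition \eqref{wedge-condition} recorded in \cref{Rem-k0}(iii) (which rests on \cref{Lem-k<=1}). Throughout, $\mfW$ is the given classical wedge with faces $P_1,P_2$ and contact angles $\vec\theta_0\in\prod_{i=1}^2(0,\pi)$, and $\S\subset\mfW$ is the given smooth, compact, embedded $\vec\theta_0$-capillary CMC hypersurface.

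First I would prove the implication ``$\vert\mfk_0\vert\le 1\ \Longrightarrow\ \S$ is a $\vec\theta_0$-capillary spherical cap that meets the edge $P_1\cap P_2$''. Argue by dichotomy on whether $\S$ touches the edge. If $\S\cap P_1\cap P_2=\emptyset$, then $\S$ would be a smooth, compact, embedded $\vec\theta_0$-capillary CMC hypersurface avoiding the edge while $\vert\mfk_0\vert\le 1$; this is exactly the configuration excluded by the first assertion of \cref{Thm-non-exist}, a contradiction. Hence $\S\cap P_1\cap P_2\neq\emptyset$, and \cref{Thm-Alexandrov} applied with $r=1$ then forces $\S$ to be a $\vec\theta_0$-capillary spherical cap. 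Since we are in the case $\S\cap P_1\cap P_2\neq\emptyset$, this cap meets the edge, and the implication is complete.

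For the converse, suppose $\S$ is a $\vec\theta_0$-capillary spherical cap with $\S\cap P_1\cap P_2\neq\emptyset$. Then $\S$ is in particular a $\vec\theta_0$-capillary hypersurface in $\mfW$ that hits the edge, so \cref{Rem-k0}(iii) (equivalently \cref{Lem-k<=1}) immediately gives $\vert\mfk_0\vert\le 1$. Combining the two implications yields the asserted equivalence.

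I do not expect a genuine obstacle: all the analytic work has been packaged into \cref{Thm-Alexandrov} and \cref{Thm-non-exist}, the latter resting on the delicate edge analysis behind \cref{Thm-HK-wedge-2}. The only points requiring care are bookkeeping: one must invoke the CMC ($r=1$) branch of \cref{Thm-non-exist}, which holds under the non-strict hypothesis $\vert\mfk_0\vert\le 1$, together with the $r=1$ case of \cref{Thm-Alexandrov}; the strict condition $\vert\mfk_0\vert<1$ and the higher-order statements $r\in\{2,\dots,n\}$ play no role here.
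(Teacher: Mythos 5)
Your proposal is correct and is essentially the paper's own argument: the paper derives \cref{Thm-Alexandrov2} precisely by combining \cref{Thm-Alexandrov}, \cref{Thm-non-exist} and \cref{Rem-k0}~(iii) (i.e.\ \cref{Lem-k<=1}), with the same dichotomy on whether $\S$ meets the edge. Your bookkeeping remarks (only the CMC branch of \cref{Thm-non-exist} under the non-strict hypothesis $\vert\mfk_0\vert\le 1$ is needed, and \cref{Thm-Alexandrov} needs no separate angle condition since edge contact already forces $\vert\mfk_0\vert\le 1$) are accurate.
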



Several remarks and questions are in order.  
\begin{remark}\normalfont\
\label{remark1.10}
\begin{itemize}

\item[(i)] 
 McCuan \cite[Theorem 2]{McCuan97} proved that
any $\vec{\theta}_0$-capillary spherical cap which is disjoint with the edge must satisfy  $\theta_0^1+\theta_0^2> \pi+\alpha$. This condition implies that 
$\vert\mfk_0\vert>1$, see  \cref{lem-condition}. 

    \item[(ii)] Lopez \cite{Lopez14} proved an Alexandrov-type theorem for embedded CMC capillary surfaces with $\vec{\theta}_0=(\frac{\pi}{2},\frac{\pi}{2})$, i.e., the free boundary case. Note that in this case, $\vert\mfk_0 \vert\leq 1$ is automatically satisfied. Hence \cref{Thm-Alexandrov2} covers Lopez's result.

In contrast to it  a ring-type CMC free hypersurface in a wedge was constructed by Wente in \cite{Wente95}, which is certainly not embedded. It is natural to ask whether there exist immersed
 ring-type CMC $\vec{\theta}_0$-hypersurfaces for a general $\vec{\theta}_0$.

 \item[(iii)] Park \cite{Park05} classified the embedded CMC capillary ring-type spanners, which are topologically annuli and disjoint with the edge. Our \cref{Thm-Alexandrov} classified all embedded CMC capillary surfaces intersecting with the edge, without any topological condition.

 \item[(iv)] McCuan \cite{McCuan97} proved a non-existence result for the  embedded CMC capillary ring-type spanners with \begin{align}\label{Mccuan-condition}
    \theta_0^1+\theta_0^2\le \pi+\alpha,
\end{align} by developing spherical reflection technique, when $n=2$. Note that   the angle relation \eqref{Mccuan-condition} is weaker than $\vert\mfk_0 \vert\leq 1$, see  \cref{lem-condition}.
However, our \cref{Thm-non-exist} requires no topological assumption. Moreover it holds for any dimensions. 

\item[(v)]  For stable CMC capillary hypersurfaces in a classical wedge, Choe-Koiso \cite{CK16} proved that such a surface is a part of  a sphere without the angle condition \eqref{wedge-equiv}, but with condition \eqref{away-edge} and with the embeddness of $\p \S$ for $n=2$ or the convexity of $\p\S$ for $n\ge 3$. It is an interesting  question to ask if an immersed stable CMC capillary hypersurface in a wedge is a part of a sphere, without any further conditions, c.f., \cite{WX19, GWX21, Souam21}.

\end{itemize}
\end{remark}

We end the Introduction with a few supplement on the study of capillary hypersurfaces in a wedge domain. A nonparametric capillary surface is  a graph of a function $f$ over a domain, say $\O$,  which satisfies the constant mean curvature equation with a
corresponding capillary boundary condition. This  is an equilibrium free surface of a fluid in a cylindrical container. When the domain $\O$ has a corner, then this nonparametric surface can be viewed as a capillary surface in a wedge (or in a wedge domain). There have been a lot of research on such a problem, especially after Concus-Finn \cite{CF69}, where it was already observed that the opening angle of the wedge and both contact angles should satisfy certain conditions for the existence. See also \cite{CF74}.
Later in \cite{CF96}  Concus-Finn proved that $f$ is continuous at a given corner if
\eqref{wedge-equiv} holds, while if \eqref{wedge-equiv} does not hold there is no solution in one case and in the left case, namely $\a> \pi -|\theta^1_0-\theta^2_0|$, they
conjectured that  $f$ has a jump discontinuity  at the corner. See the  Concus-Finn rectangle in \cite{Lan10}, Figure 2.
This conjecture was solved by Lancaster in \cite{Lan10} with the methods developed by Allard \cite{A72} and especially by Simon \cite{S80}. The latter was crucially used in a very recent work of Chao Li \cite{Li21} mentioned at the beginning of the Introduction. 
See also his further work  \cite{EL22} with Edelen  on surfaces in a  polyhedral domain, which is also closely related to surfaces in a wedge domain.

The rest of the paper is organized as follows. In \cref{Sec2}, we collect some basic facts about wedges and capillary hypersurfaces in wedges.
In \cref{Sec3} and \cref{Sec4}, we prove the main theorems on the Heintze-Karcher inequality in the half-space and a wedge. In \cref{Sec5}, we prove the Alexandrov-type theorem and the non-existence result for CMC capillary hypersurfaces in a wedge.

\

\section{Notations and Preliminaries}\label{Sec2}

	
Let $\O$ be the bounded domain in $\mfW$  with piecewise smooth boundary $\p\O=\S\cup (\bigcup_{i=1}^LT_i)$, where  $\S=\overline{\p\O\cap\mathring{\mfW}}$ is a smooth compact embedded $\vec{\theta}$-capillary hypersurface in $\mfW$ and $T_i=\p\O\cap P_i$. 
	Denote the corners by $\Gamma_i=\S\cap T_i$, which are smooth, co-dimension two submanifolds in $\rr^{n+1}$.
	For the sake of simplicity, we denote by $\Gamma$ the union of $\Gamma_i$, i.e., $\Gamma=\bigcup_{i=1}^L\Gamma_i$.
	We use the following notation for normal vector fields.
	Let $\nu$ and $\bar N_i$ be the outward unit normal to $\S$ and $P_i$ (with respect to $\O$) respectively.
	Let $\mu_i$ be the outward unit co-normal to $\Gamma_i=\p\S\cap P_i\subset \S$ and $\bar \nu_i$ be the outward unit co-normal to $\Gamma_i\subset T_i$.
	Under this convention, along each $\Gamma_i$ 
	$\{\nu,\mu_i\}$ and $\{\bar \nu_i,\bar N_i\}$ span the same 2-dimensional plane and  have the same orientation in the normal bundle of $\p \S\subset \rr^{n+1}$. 
	Hence one can define
	the contact angle function  along each $\Gamma_i$, $\theta_i:\Gamma_i\to (0,\pi)$ by
	\begin{eqnarray}\label{munu}
		&&\mu_i(x)=\sin {\theta^i(x)} \bar N_i+\cos\theta^i(x) \bar \nu_i(x),\\&& \nu(x)=-\cos \theta^i(x) \bar N_i+\sin \theta^i(x) \bar \nu_i(x).\label{defn-nu}
	\end{eqnarray} Let $\vec{\theta}(x)$ denote the $L$-tuple $(\theta^1(x), \ldots, \theta^L(x))$. We call $\Sigma$ a $
	\vec\theta$-capillary hypersurface, if we want to emphasize the contact angle function. We also use $\vec\theta_0$-capillary hypersurface to denote such a hypersurface with  $\vec\theta\equiv \vec\theta_0$, a vector $\vec\theta_0\in \prod_{i=1}^L(0,\pi).$
	
	We denote by $\bar \n$, $\bar \Delta$, $\bar \n^2$ and $\bar{\rm div}$, the gradient, the Laplacian, the Hessian and the divergence on $\rr^{n+1}$ respectively, while by $\n$, $\Delta$, $\n^2$ and ${\rm div}$, the gradient, the Laplacian, the Hessian and the divergence on the smooth part of $\p \O$, respectively. Let $g$, $h$ and $H$ be the first, second fundamental forms and the mean curvature  of the smooth part of $\p \O$ respectively. Precisely, $h(X, Y)=\<\bar\n_X\nu, Y\>$ and $H={\rm tr}_g(h)$. In particular, since $P_i$ is planar, the second fundamental form $h_i\equiv0$, correspondingly, the mean curvature $H_i$ vanishes. 
	
	We need the following structural lemma for compact hypersurfaces in $\rr^{n+1}$ with boundary, which is well-known and widely used, see \cite{AS16, JXZ22}.
	
	\begin{lemma}\label{Lem-structural}
		Let $\S\subset \rr^{n+1}$ be a smooth compact hypersurface with boundary. Then it holds that
		\begin{align}\label{EQ-structural}
			n\int_{\S}\nu \rd A=\int_{\p \S}\left\{\left<x,\mu\right>\nu-\left<x,\nu\right>\mu\right\}\rd s.
		\end{align}
	\end{lemma}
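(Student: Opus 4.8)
The plan is to prove \eqref{EQ-structural} one component at a time. Fix an arbitrary constant vector $a\in\rr^{n+1}$ and look for a vector field on $\S$, tangent to $\S$, whose intrinsic divergence equals $n\<\nu,a\>$ and whose outward-conormal flux along $\p\S$ equals $\<\<x,\mu\>\nu-\<x,\nu\>\mu,\,a\>$; once such a field is produced, the divergence theorem on the smooth compact manifold-with-boundary $\S$ yields the pairing of \eqref{EQ-structural} with $a$, and letting $a$ run over an orthonormal basis of $\rr^{n+1}$ completes the proof.

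The correct choice is
\[
X_a:=\<\nu,a\>\,x^{\top}-\<x,\nu\>\,a^{\top},
\]
where $x^{\top}=x-\<x,\nu\>\nu$ and $a^{\top}=a-\<a,\nu\>\nu$ denote the tangential projections onto $\S$; this vector field is manifestly tangent to $\S$. Along $\p\S$ the conormal $\mu$ is tangent to $\S$, so $\<x^{\top},\mu\>=\<x,\mu\>$ and $\<a^{\top},\mu\>=\<a,\mu\>$, whence $\<X_a,\mu\>=\<x,\mu\>\<\nu,a\>-\<x,\nu\>\<\mu,a\>=\<\<x,\mu\>\nu-\<x,\nu\>\mu,\,a\>$, which is precisely the right-hand integrand of \eqref{EQ-structural} tested against $a$.

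It remains to compute the intrinsic divergence $\mathrm{div}_\S X_a$. I will use the elementary identities (all intrinsic to $\S$, with $h$, $H$ its second fundamental form and mean curvature as above, and $S$ the shape operator, $\<SX,Y\>=h(X,Y)$): $\mathrm{div}_\S\,\nu=H$, $\mathrm{div}_\S\,x=n$, $\n\<\nu,a\>=S(a^{\top})$, and $\n\<x,\nu\>=S(x^{\top})$, the last two obtained by differentiating and using $h(\cdot,\cdot)=\<\bar\n_{\cdot}\nu,\cdot\>$ together with $\vert\nu\vert\equiv 1$ (so that $\bar\n\nu$ is tangential). Then a direct expansion via $\mathrm{div}_\S(fW)=\<\n f,W\>+f\,\mathrm{div}_\S W$ gives
\[
\mathrm{div}_\S\big(\<\nu,a\>x^{\top}\big)=h(a^{\top},x^{\top})+\<\nu,a\>\big(n-H\<x,\nu\>\big),
\]
\[
\mathrm{div}_\S\big(\<x,\nu\>a^{\top}\big)=h(x^{\top},a^{\top})-H\<x,\nu\>\<a,\nu\>,
\]
and subtracting: the $h$-terms cancel by symmetry of the second fundamental form while the two $H\<x,\nu\>$-terms cancel, leaving $\mathrm{div}_\S X_a=n\<\nu,a\>$. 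Hence $\int_\S\mathrm{div}_\S X_a\,dA=\int_{\p\S}\<X_a,\mu\>\,ds$ reads $n\int_\S\<\nu,a\>\,dA=\int_{\p\S}\<\<x,\mu\>\nu-\<x,\nu\>\mu,\,a\>\,ds$, and \eqref{EQ-structural} follows since $a$ is arbitrary.

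There is no genuine obstacle here: this is a routine tangential-divergence calculation, and the only thing requiring care is bookkeeping the tangential projections and spotting the two cancellations that collapse $\mathrm{div}_\S X_a$ to $n\<\nu,a\>$. (Alternatively, the same identity can be derived by applying the tangential Stokes theorem to a suitable $(n-1)$-form, but the vector-field argument above is the most self-contained.)
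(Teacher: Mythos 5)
Your proof is correct and is essentially identical to the paper's: the paper also takes the tangential field $Z=\langle\nu,e\rangle x^{T}-\langle x,\nu\rangle e^{T}$, computes $\mathrm{div}(Z)=n\langle\nu,e\rangle$, and integrates by parts. You have merely written out the divergence computation and the cancellations in more detail.
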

	\begin{proof}
		Let $Z=\left<\nu, e\right>x^T- \left<x,\nu\right>e^T$ for any constant vector $e$, where $x^T$ and $e^T$ denote the tangential component of $x$ and $e$ respectively. One computes that
		$${\rm div}(Z)=n\left<\nu, e\right>.$$
		Integration by parts yields the assertion.
	\end{proof}
The following lemma is well-known when the capillary hypersurfaces are bounded by containers with totally umbilical boundaries,  in particular, a wedge in $\mbR^{n+1}$, see e.g., \cite{AS16,LX17, WX19}.

\begin{lemma}\label{Lem-principal-direction}
    Let $\mfW\subset\mbR^{n+1}$ be a wedge and $\theta^i_0\in(0,\pi)$ for $i=1,\ldots,L$.
    If $\S\subset \mfW$is a smooth $\vec{\theta}_0$-capillary hypersurface, then along $\p\S$, $\mu_i$ is a principal direction of $\S$.
\end{lemma}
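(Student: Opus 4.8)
The plan is to differentiate the capillary contact angle condition along the boundary $\partial\Sigma$ and show that the shape operator of $\Sigma$ preserves the tangent space to $P_i$, which forces $\mu_i$ to be a principal direction. Fix a component $\Gamma_i$ of $\partial\Sigma$ and a point $x\in\Gamma_i$. The key geometric input is that $P_i$ is totally geodesic (indeed flat) in $\mathbb{R}^{n+1}$, so its outward normal $\bar N_i$ is a constant vector field along $\Gamma_i$. Decompose, as in \eqref{munu}--\eqref{defn-nu}, the pair $\{\nu,\mu_i\}$ in terms of $\{\bar N_i,\bar\nu_i\}$ via the constant angle $\theta_0^i$. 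Since $\theta^i\equiv\theta_0^i$ is constant, differentiating these relations along any vector $X$ tangent to $\Gamma_i$ gives $\bar\nabla_X\nu = \sin\theta_0^i\,\bar\nabla_X\bar\nu_i$ and $\bar\nabla_X\mu_i = \cos\theta_0^i\,\bar\nabla_X\bar\nu_i$, because the $\bar N_i$-terms disappear ($\bar\nabla_X\bar N_i=0$). In particular $\bar\nabla_X\nu$ is a multiple of $\bar\nabla_X\bar\nu_i$.

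Next I would compute $h(X,\mu_i)=\langle\bar\nabla_X\nu,\mu_i\rangle$ and show it vanishes for all $X$ tangent to $\Gamma_i$. From the previous step, $\langle\bar\nabla_X\nu,\mu_i\rangle = \sin\theta_0^i\,\langle\bar\nabla_X\bar\nu_i,\mu_i\rangle$. Now expand $\mu_i = \sin\theta_0^i\,\bar N_i+\cos\theta_0^i\,\bar\nu_i$: the term $\langle\bar\nabla_X\bar\nu_i,\bar\nu_i\rangle=\frac12 X|\bar\nu_i|^2=0$ since $\bar\nu_i$ is a unit field, and the term $\langle\bar\nabla_X\bar\nu_i,\bar N_i\rangle=-\langle\bar\nu_i,\bar\nabla_X\bar N_i\rangle=0$ since $\bar N_i$ is constant. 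Hence $h(X,\mu_i)=0$ for every $X\in T_x\Gamma_i$. Since $\{\mu_i\}\cup T_x\Gamma_i$ spans $T_x\Sigma$, the vector $S(\mu_i)$ (with $S$ the shape operator, $g(S\cdot,\cdot)=h(\cdot,\cdot)$) is orthogonal to all of $T_x\Gamma_i$, hence parallel to $\mu_i$; that is, $S\mu_i=h(\mu_i,\mu_i)\mu_i$, so $\mu_i$ is a principal direction of $\Sigma$ at $x$. As $x\in\Gamma_i$ was arbitrary, this holds along all of $\partial\Sigma$.

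The argument is short and the only thing to be careful about is bookkeeping of which frame fields are defined only along $\Gamma_i$ versus globally: $\bar N_i$ is a genuine constant vector of $\mathbb{R}^{n+1}$ (this is where flatness of $P_i$ enters, and the analogous statement for general umbilic supports uses $\bar\nabla_X\bar N_i = $ a multiple of $X$, which is tangent to $\Gamma_i$ and hence still kills the relevant inner products), while $\nu,\mu_i,\bar\nu_i$ are only defined along $\Gamma_i$, so all differentiation must be in directions $X$ tangent to $\Gamma_i$. I do not expect a genuine obstacle here; the ``hard part'' is merely to present the frame decomposition cleanly and to note that the constancy of $\theta^i$ is exactly what makes the $\bar N_i$-components drop out upon differentiation, which is the crux of the statement.
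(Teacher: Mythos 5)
Your proof is correct and is essentially the same argument as the paper's: differentiate the frame relations \eqref{munu}--\eqref{defn-nu} along $\Gamma_i$, use that $\theta^i$ is constant and that $P_i$ is flat (so the $\bar N_i$-terms drop out), and conclude $h(X,\mu_i)=0$ for all $X$ tangent to $\Gamma_i$. The only cosmetic difference is that you compute $\langle\bar\nabla_X\nu,\mu_i\rangle$ while the paper computes $\langle\bar\nabla_X\mu_i,\nu\rangle$ and phrases the vanishing as $-h_i(\cdot,X)=0$; both reduce to the same cancellation.
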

\begin{proof} For the completeness we provide a proof.
    It suffices to prove that $h(\mu_i,X)=0$ for any vector $X$ tangent to $\p\S$. Indeed,
    \begin{align}
        h(\mu_i,X)
        =&\left<\bar\nabla_X\mu_i,\nu\right>
        =\left<\bar\nabla_X\left(\sin\theta^i\bar N_i+\cos\theta^i\bar\nu_i\right),-\cos\theta^i\bar N_i+\sin\theta^i\bar\nu_i\right>\notag\\
        =&\left<\bar\nabla_X\bar\nu_i,\bar N_i\right>=-h_i(\bar\nu_i,X)=0,
    \end{align}
    where we have used \eqref{munu}, the constancy of $\theta_i$, the fact that $\bar\nu_i,\bar N_i$ are unit vector fields, and $h_i=0$ since $P_i$ are totally geodesic. This completes the proof.
\end{proof}

The $r$-th mean curvature $H_r$ of $\S$ is defined by the identity:
\begin{align}\label{defn-Pn(t)}
    \mcP_n(t)=\prod_{i=1}^n(1+t\kappa_i)=\sum_{i=0}^n\binom{n}{i}H_it^i
\end{align}
for all real number $t$. Thus $H_1=\frac{H}{n}$ is the mean curvature of $\S$ and $H_n$ is the Gaussian curvature, and we adopt the convention that $H_0=1$.

We have the following Minkowski-type formula for $\vec{\theta}_0$-capillary hypersurfaces.

\begin{proposition}\label{Prop-Minkowski}
Let $\mfW\subset\mbR^{n+1}$ be a wedge and $\S\subset  \mfW$ be a $\vec{\theta}_0$-capillary hypersurface. Then it holds that for $r=1,\ldots,n$, 
\begin{align}\label{eq-Minkow-higherorder}
    \int_\S\left(H_{r-1}\left(1+\left<\nu,\mfk_0\right>\right)-H_r\left<x,\nu\right>\right)\rd A=0.
\end{align}
In particular, if $L=1$, i.e. $\S\subset \bar{\mathbb{R}}^{n+1}_+$, then
\begin{eqnarray}\label{mink-formula} \int_\S H_{r-1}(1-\cos\theta_0\<\nu, E_{n+1}\>)-H_r\<x,\nu\> \rd A=0. \end{eqnarray} 
\end{proposition}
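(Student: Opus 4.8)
The plan is to derive \eqref{eq-Minkow-higherorder} by integrating a suitable divergence identity on $\Sigma$ and then showing that the boundary term along $\partial\Sigma = \bigcup_i \Gamma_i$ vanishes thanks to the capillarity condition and the special choice of $\mfk_0$. The starting point is the classical Newton-tensor (or Hsiung--Minkowski) machinery. Writing $T_{r-1}$ for the $(r-1)$-th Newton transformation of the shape operator of $\Sigma$, one has $\operatorname{tr}(T_{r-1}) = (n-r+1)\binom{n}{r-1}H_{r-1}$ and $\operatorname{tr}(T_{r-1}\circ h) = r\binom{n}{r}H_r$, together with the fact that $T_{r-1}$ is divergence-free in $\mathbb{R}^{n+1}$ (this uses only that the ambient space is flat, via the Codazzi equations). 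Consider on $\Sigma$ the tangential vector field
\[
X_{r-1} := T_{r-1}\big( (x - \mfk_0)^\top \big),
\]
where $(\cdot)^\top$ denotes the tangential part along $\Sigma$. A standard computation gives
\[
\operatorname{div}_\Sigma X_{r-1} = \operatorname{tr}(T_{r-1})\big(1 + \langle \nu, \mfk_0\rangle\big) - \operatorname{tr}(T_{r-1}\circ h)\,\langle x - \mfk_0,\nu\rangle,
\]
using $\bar\nabla(x-\mfk_0) = \mathrm{Id}$, the divergence-freeness of $T_{r-1}$, and $\langle \bar\nabla(\cdot)^\top, \nu\rangle$ producing the $h$-term; note $\langle \mfk_0,\nu\rangle$ enters with the correct sign because of the shift by the constant vector $\mfk_0$, and $\langle x-\mfk_0,\nu\rangle = \langle x,\nu\rangle - \langle \mfk_0,\nu\rangle$, which combines with the first term so that the net result is exactly (after dividing by the combinatorial constants) the integrand of \eqref{eq-Minkow-higherorder}.

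Next I would apply the divergence theorem on $\Sigma$ (which has boundary), producing $\int_\Sigma \operatorname{div}_\Sigma X_{r-1}\,dA = \int_{\partial\Sigma} \langle X_{r-1}, \mu\rangle\,ds$, where $\mu$ is the outward unit conormal, i.e.\ $\mu = \mu_i$ on $\Gamma_i$. The crux is to show this boundary term vanishes. On each $\Gamma_i$, \cref{Lem-principal-direction} tells us that $\mu_i$ is a principal direction of $\Sigma$, so it is an eigenvector of the shape operator and hence of $T_{r-1}$; therefore $\langle T_{r-1}(x-\mfk_0)^\top, \mu_i\rangle = \lambda_i\, \langle (x-\mfk_0)^\top, \mu_i\rangle = \lambda_i\,\langle x - \mfk_0, \mu_i\rangle$ for the corresponding eigenvalue $\lambda_i$ of $T_{r-1}$. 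It then suffices to show $\langle x - \mfk_0, \mu_i\rangle = 0$ along $\Gamma_i$. Decompose $\mu_i = \sin\theta_0^i\,\bar N_i + \cos\theta_0^i\,\bar\nu_i$ via \eqref{munu}. Since $x \in P_i$ along $\Gamma_i$ and $O \in P_i$, the vector $x$ is tangent to $P_i$, so $\langle x, \bar N_i\rangle = 0$; moreover $\bar\nu_i$ is tangent to $P_i$, and $x - $ (its projection) is handled by noting $\langle x,\bar\nu_i\rangle$ is the relevant planar component. For $\mfk_0 = \sum_j c_j \bar N_j$ with $\langle \mfk_0,\bar N_i\rangle = \cos\theta_0^i$, one gets $\langle \mfk_0,\mu_i\rangle = \sin\theta_0^i\cos\theta_0^i + \cos\theta_0^i\langle \bar N_i \text{-orthogonal part},\bar\nu_i\rangle$; a short linear-algebra check, using that $\bar\nu_i \in P_i$ and the definition of $\mfk_0$, shows the $\bar\nu_i$-components of $x$ and of $\mfk_0$ coincide along $\Gamma_i$, forcing $\langle x - \mfk_0,\mu_i\rangle = 0$. (Equivalently: $\mfk_0$ is the center of the unit $\vec\theta_0$-capillary cap, $x$ lies on $P_i$, and $\mu_i$ is the conormal along the contact curve, so $x - \mfk_0$ is orthogonal to $\mu_i$ by the very geometry of the tangency.) This kills the boundary integral and yields \eqref{eq-Minkow-higherorder}.

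Finally, the special case \eqref{mink-formula} for $L=1$ follows immediately: there $\mfW = \bar\mathbb{R}^{n+1}_+$, $\bar N_1 = -E_{n+1}$, and the defining relation $\langle \mfk_0, \bar N_1\rangle = \cos\theta_0$ gives $\mfk_0 = -\cos\theta_0\,E_{n+1}$, so $1 + \langle\nu,\mfk_0\rangle = 1 - \cos\theta_0\langle\nu,E_{n+1}\rangle$, which is exactly the stated integrand. The main obstacle I anticipate is the boundary-term computation: verifying cleanly that $\langle x - \mfk_0,\mu_i\rangle \equiv 0$ on $\Gamma_i$ requires carefully unpacking the definition \eqref{k0} of $\mfk_0$ together with the conormal decomposition \eqref{munu} and the fact that $x$ lies in $P_i$ with $O$ on the edge; everything else (the divergence identity, the Newton-tensor trace formulas, the reduction to $H_{r-1}, H_r$) is routine. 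One should also double-check the convention for $r=1$, where $T_0 = \mathrm{Id}$ and the identity reduces to the first-order formula, consistent with \eqref{mink-formula_1}.
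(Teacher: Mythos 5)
Your proposal contains two genuine gaps, either of which is fatal as written.

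First, the interior divergence identity is wrong. For a constant vector $e$ one has, tangentially, $\nabla_X e^{\top}=-\langle e,\nu\rangle\, h(X,\cdot)^{\sharp}$, so
\[
\operatorname{div}_\Sigma\bigl(T_{r-1}((x-\mfk_0)^{\top})\bigr)
=\operatorname{tr}(T_{r-1})-\operatorname{tr}(T_{r-1}\circ h)\,\langle x-\mfk_0,\nu\rangle
= r\binom{n}{r}\Bigl(H_{r-1}-H_r\langle x,\nu\rangle+H_r\langle \mfk_0,\nu\rangle\Bigr).
\]
No term $\operatorname{tr}(T_{r-1})\langle\nu,\mfk_0\rangle$ appears: the constant shift by $\mfk_0$ couples to $H_r$, not to $H_{r-1}$. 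The integrand of \eqref{eq-Minkow-higherorder} is $H_{r-1}(1+\langle\nu,\mfk_0\rangle)-H_r\langle x,\nu\rangle$, which differs from the above by $(H_{r-1}-H_r)\langle\mfk_0,\nu\rangle$; already for $r=1$ your field gives $n-H\langle x,\nu\rangle+H\langle\mfk_0,\nu\rangle$ instead of $n(1+\langle\nu,\mfk_0\rangle)-H\langle x,\nu\rangle$. To produce $H_{r-1}\langle\nu,\mfk_0\rangle$ as a divergence one needs the field $\langle\nu,\mfk_0\rangle x^{\top}-\langle x,\nu\rangle\mfk_0^{\top}$ of \cref{Lem-structural} (suitably combined with $T_{r-1}$, as in \cite[Proposition 2.5]{WWX22}), not the naive shift $-\mfk_0^{\top}$.

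Second, the boundary term does not vanish pointwise. From $\bar N_i=\sin\theta_0^i\mu_i-\cos\theta_0^i\nu$, $\langle x,\bar N_i\rangle=0$ and $\langle\mfk_0,\bar N_i\rangle=\cos\theta_0^i$ one gets $\langle x,\mu_i\rangle=\tfrac{\cos\theta_0^i}{\sin\theta_0^i}\langle x,\nu\rangle$ and $\langle\mfk_0,\mu_i\rangle=\tfrac{\cos\theta_0^i}{\sin\theta_0^i}(1+\langle\mfk_0,\nu\rangle)$, hence
\[
\langle x-\mfk_0,\mu_i\rangle=\frac{\cos\theta_0^i}{\sin\theta_0^i}\bigl(\langle x,\nu\rangle-1-\langle\nu,\mfk_0\rangle\bigr),
\]
which vanishes identically only when $x-\mfk_0=\nu$, i.e.\ for the unit cap centered at $\mfk_0$. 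For a $\theta_0$-capillary spherical cap of radius $R$ in the half-space this equals $\cos\theta_0\sin\theta_0(R-1)$, nonzero whenever $R\neq1$ and $\theta_0\neq\pi/2$; so the claim that the $\bar\nu_i$-components of $x$ and $\mfk_0$ coincide along $\Gamma_i$ is false. Moreover, for $r\ge2$ the eigenvalue of $T_{r-1}$ in the direction $\mu_i$ varies along $\Gamma_i$, so one cannot hope for an integral cancellation either. The paper avoids both issues: it proves the case $r=1$ by combining $\operatorname{div}(x^{\top})$ with \cref{Lem-structural} and the relation \eqref{xmu}, and then deduces all higher $r$ by applying the $r=1$ formula to the capillary parallel hypersurfaces $\varphi_t(x)=x+t(\nu+\mfk_0)$ and identifying coefficients of the resulting polynomial in $t$. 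Your reduction of \eqref{mink-formula} from \eqref{eq-Minkow-higherorder} in the case $L=1$ is correct.
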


\begin{proof}The case for $r=1$ has been proved in \cite[Lemma 5]{LX17}. For the sake of completeness, we include the proof here.

		Since $${\rm div}(x^T)=n-H\left<x, \nu\right>,$$ by using integration by parts in $\S$, we get
		\begin{align}\label{xxeq1}
		\int_\S (n -H\<x,\nu\>)\rd A
		=\sum_{i=1}^L\int_{\Gamma_i}\<x, \mu_i\> \rd s_i.
		\end{align} 
		From the capillary boundary condition \eqref{munu} it is easy to see
		that on each $\Gamma_i$
		\begin{align}\label{xmu}
		    -\cos\theta^i_0\left<x,\nu\right>+\sin\theta^i_0\left<x,\mu_i\right>=\left<x,\bar N_i\right>=0.
		\end{align}
		By \eqref{EQ-structural}, \eqref{munu} and \eqref{xmu}, we get
		\begin{align}\label{xxeq2}
		n\int_\S \<\nu, \mfk_0\> \rd A
		&=\sum_{i=1}^L\int_{\Gamma_i}\left(\left<x,\mu_i\right>\<\nu, \mfk_0\>-\left<x,\nu\right>\<\mu_i,\mfk_0\>\right) \rd s_i\nonumber	\\
		&=\sum_{i=1}^L\int_{\Gamma_i}\left(\left<x,\mu_i\right>\left<\nu,\mfk_0\right>
		+\left<x,\nu\right>\frac{-\cos\theta^i_0}{\sin\theta^i_0}\left(1+\left<\nu,\mfk_0\right>\right)\right)\rd s_i\notag\\
		&=\sum_{i=1}^L\int_{\Gamma_i}\left(\left<x,\mu_i\right>\left<\nu,\mfk_0\right>
		-\left<x,\mu_i\right>\left(1+\left<\nu,\mfk_0\right>\right)\right)\rd s_i\notag\\
		&=-\sum_{i=1}^L\int_{\Gamma_i}\left<x,\mu_i\right>\rd s_i,
		\end{align}
		It follows from \eqref{xxeq1} and \eqref{xxeq2} that
		\begin{align}\label{Minkowski-wedge}
 \int_\S n(1+\<\nu, \mfk_0\>) -H\<x,\nu\>\rd A=0.
\end{align}
		
		Now we prove \eqref{eq-Minkow-higherorder} for general $r$. For a small real number $t>0$, consider a family of hypersurfaces with boundary $\S_t$, 
		 defined by 
		$$y:=\varphi_t(x)=x+t(\nu(x)+\mfk_0)\quad x\in\S.$$We claim that $\S_t$ is also a $\vec\theta_0$-capillary hypersurface in $\mfW$.
		In fact, if $e_1,\ldots,e_n$ are principal directions of a point of $\S$ and $\kappa_i$ are the corresponding principal curvatures, we have
		\begin{align}\label{eq-principaldirections}
		    (\varphi_t)_\ast(e_i)=\bar\nabla_{e_i}\varphi_t=(1+t\kappa_i)e_i,\quad i=1,\ldots,n.
	\end{align}
	From \eqref{eq-principaldirections}, we see that $\nu_t(y)=\nu(x)$, where $\nu_t(y)$ denotes the outward unit normal of $\S_t$ at $y=\varphi_t(x)$.
	Moreover, the capillarity condition \eqref{defn-nu} implies: for any $x\in\p\S\cap P_i$, we have
	\begin{align}
	    \left<\nu(x)+\mfk_0,\bar N_i\right>=-\cos\theta_0^i+\cos\theta_0^i=0,
	\end{align}
	in other words, $\varphi_t(x)\in P_i$, and hence $\p\S_t\subset\varphi_t(\p\S)$. In view of this, we have: $\left<\nu_t(y),\bar N_i\right>=\left<\nu(x),\bar N_i\right>=-\cos\theta^i_0$; that is, $\S_t$ is also a $\vec\theta_0$-capillary hypersurface in $\mfW$.
	
	Therefore, we can exploit \eqref{Minkowski-wedge} to find that
	\begin{align}\label{eq-PropMin-0} \int_{\S_t=\varphi_t(\S)}n\left(1+\left<\nu_t,\mfk_0\right>\right)-H(t)\mid_y\left<y,\nu_t\right>\rd A_t(y)=0.
	\end{align}
By \eqref{eq-principaldirections}, the tangential Jacobian of $\varphi_t$ along $\S$ at $x$ is just
\begin{align}\label{eq-PropMin-1}
    {\rm J}^\S\varphi_t(x)=\prod_{i=1}^n(1+t\kappa_i(x))=\mcP_n(t),
\end{align}
where $\mcP_n(t)$ is the polynomial defined in \eqref{defn-Pn(t)}.
Moreover, using \eqref{eq-principaldirections} again, we see that
the corresponding principal curvatures are given by
\begin{align}\label{eq-PropMin-2}
    \kappa_i(\varphi_t(x))=\frac{\kappa_i(x)}{1+t\kappa_i(x)}.
\end{align}
Hence fix $x\in\S$, the mean curvature of $\S_t$ at  $\varphi_t(x)$, say $H(t)$, is given by
\begin{align}
\label{29}
    H(t)=\frac{\mcP'_n(t)}{\mcP_n(t)}=\frac{\sum_{i=0}^{n}i\binom{n}{i} H_it^{i-1}}{\mcP_n(t)},
\end{align}
where $H_i=H_i(x)$ is the $i$-th mean curvature of $\S$ at $x$.

Using the area formula, \eqref{eq-PropMin-1} and \eqref{29}, we find from \eqref{eq-PropMin-0} that
\begin{align}
    \int_{\S}n\left(1+\left<\nu,\mfk_0\right>\right)\mcP_n(t)-t\left(1+\left<\nu,\mfk_0\right>\right)\mcP'_n(t)-\mcP'_n(t)\left<x,\nu\right>\rd A_x=0.
\end{align}
As the left hand side in this equality is a polynomial in the time variable $t$, this shows that all its coefficients vanish, and hence 
\begin{align}
    \int_\S\left(1+\left<\nu,\mfk_0\right>\right)H_{r-1}-H_r\left<x,\nu\right>\rd A,\quad r=1,\ldots,n.
\end{align}
This gives \eqref{eq-Minkow-higherorder}.
\end{proof}
We have also
\begin{align}\label{mink0}
        (n+1)\vert\O\vert=\int_\S\left<x,\nu\right>\rd A,
    \end{align}
    which is easy to prove. If one views \eqref{mink0} as one of \eqref{eq-Minkow-higherorder} with $r=0$, the Heintze-Karcher inequality \eqref{EQ-HK-halfspace} that we want to  prove could also be viewed one of them with $r=-1$. Certainly now it is an inequality, instead of an equality.

\begin{remark}
\normalfont
An alternative proof of \cref{Prop-Minkowski} can be given as that of \cite[Proposition 2.5]{WWX22}, where the Minkowski-type formula for the half-space case has been proved.
\end{remark}

\ 

In the sequel, $\Sigma$ will be always referred to as a smooth, embedded capillary hypersurface.






\

\section{Heintze-Karcher Inequality in the Half-Space}\label{Sec3}

\begin{proof}[Proof of \cref{Thm-HK-halfspace}]
Let $\S\subset\overline{\mbR^{n+1}_+}$ be a $
\theta$-capillary hypersurface with  $\theta\leq\theta_0$ along $\p\S$.
For any $x\in \S$, let $\{e_i=e_i(x)\}$ be the set of  unit  principal  vectors of $\S$ at $x$ and $\{\kappa_i(x)\}$  the set of  corresponding principal curvatures. Since $\S$ is strictly mean convex, $$\max_i{\kappa_i(x)}\ge \frac{H(x)}{n}>0, \hbox{ for }x\in \S.$$
We define 
\begin{align*}
    Z=\left\{(x,t)\in\S\times\mbR:0<t\leq\frac{1}{\max{\kappa_i(x)}}\right\},
   \end{align*} 
   and
   \begin{align}
    &\zeta: Z\to \rr^{n+1},\nonumber\\
    &\zeta(x,t)=x-t\left(\nu(x)-\cos\theta_{0} E_{n+1}\right).\label{parallel}
\end{align}
$\zeta$ gives a family of hypersurfaces  $\zeta (\Sigma, t)$, which are the modified parallel hypersurfaces mentioned above.  

{\bf Claim:} $\O\subset\zeta(Z)$.

Indeed, let us denote by $B_r(x)$ the closed  ball centered at $x$ of radius $r$, and $S_r(x)=\partial B_r(x)$.
For any $y\in\O$, we consider a family of spheres $\{S_r(y-r\cos\theta_{0} E_{n+1})\}_{r\geq0}$.
Since $y\in\O$ is an interior point, when $r$ is small enough, we  have $B_r(y-r\cos\theta_{0} E_{n+1})\subset\subset\O$. Since $\vert\cos\theta_{0}\vert<1$, it is easy to see that the spheres gives a foliation of $\mbR^{n+1}$.
Hence 
$S_r(y-r\cos\theta_{0} E_{n+1})$
must touch $\S$ as we increase the radius $r$. As a conclusion, for any $y\in\O$, there exists $x\in\S$ and $r_y>0$, such that $S_r(y-r_y\cos\theta_{0} E_{n+1})$
touches $\S$ for the first time, at the point $x\in \S$.
We have two cases.

\textbf{Case 1.} $x\in\mathring{\Sigma}$. 

In this case, since $x\in\mathring{\S}$, 
the sphere $S_r(y-r_y\cos\theta_{0} E_{n+1})$ is tangent to $\S$ at $x$  from the interior.  It follows  that $r_y\leq\frac{1}{\max{\kappa_i(x)}}$. Invoking the definition of $Z$ and $\zeta$, we find that $y\in\zeta(Z)$ in this case.

 \textbf{Case 2.} $x\in\p\S$. 

We will rule out this case by the condition on the contact angle function $\theta$ of $\S$.  In this case, 
by the  first touching property of $x$, the contact angle  $\theta_y$ of $S_r(y-r_y\cos\theta_{0} E_{n+1})$ with $\p \mbR^{n+1}_+$ is smaller than or equals to $\theta(x)$, which is smaller than or equals to $\theta_0$, by assumption.
%
(see \cref{Fig-1} for an illustration). However $\theta_y \le \theta_0$ implies that $\langle y, E_{n+1}\rangle <0$, a contradiction to $y\in \O\subset\mbR^{n+1}_+$. 
The {\bf Claim} is thus proved.
\begin{figure}[H]
	\centering
	\includegraphics[width=10cm]{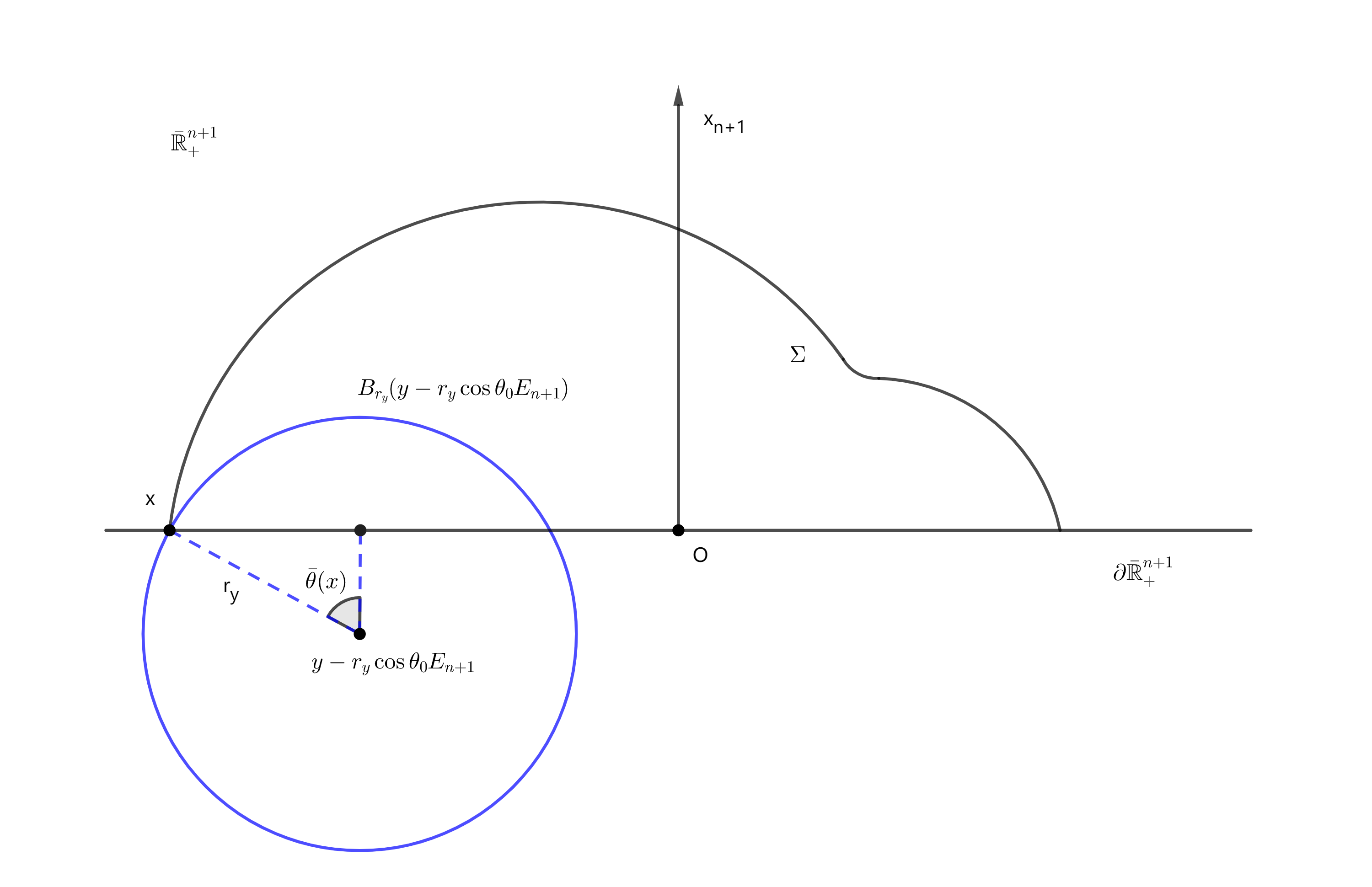}
	\caption{Boundary touching.}
	\label{Fig-1}
\end{figure}
 
By a simple computation, we find
\begin{align*} 
\p_t \zeta(x, t)&=-\left(\nu(x)-\cos\theta_0 E_{n+1}\right), \\ 
\bar \n_{e_i} \zeta(x, t)&=\left(1-t\kappa_i(x)\right)e_i.\label{eq-Thm1-2}
\end{align*}
Hence the tangential Jacobian of $\zeta$ along $Z$, at $(x,t)$ is just
$${\rm J}^Z\zeta(x,t)= (1-\cos\theta_0\<\nu, E_{n+1}\>)\prod_{i=1}^n(1-t\kappa_i).$$
By virtue of the fact that $\O\subset\zeta(Z)$, the area formula yields
\begin{align*}
    \vert\O\vert\leq\vert\zeta(Z)\vert
    \leq&\int_{\zeta(Z)}\mcH^0(\zeta^{-1}(y))\rd y
    =\int_Z{\rm J}^Z\zeta \rd\mcH^{n+1}\notag\\
    =&\int_\Sigma \rd A\int_0^{\frac{1}{\max\left\{\kappa_i(x)\right\}}}\left(1-\cos\theta_0\left<\nu,E_{n+1}\right>\right)\prod_{i=1}^n(1-t\kappa_i(x))\rd t.
\end{align*}
By the AM-GM inequality, $1-\cos\theta_0\left<\nu,E_{n+1}\right>>0$ on $\S$, and the fact that $\max\left\{\kappa_i(x)\right\}\geq H(x)/n$, we obtain
\begin{align*}
    \vert\O\vert
    \leq&\int_\S \rd A\int_0^{\frac{1}{\max\left\{\kappa_i(x)\right\}}} \left(1-\cos\theta_0\<\nu, E_{n+1}\>\right)\left(\frac{1}{n}\sum_{i=1}^n\left(1-t\kappa_i(x)\right)\right)^n \rd t\notag\\
    \leq&\int_\S \left(1-\cos\theta_0\<\nu, E_{n+1}\>\right)\rd A\int_0^{\frac{n}{H(x)}} \left(1-t\frac{H(x)}{n}\right)^n \rd t\notag\\
    =&\frac{n}{n+1}\int_\S \frac{(1-\cos\theta_0\<\nu, E_{n+1}\>)}{H} \rd A,
\end{align*}
which is \eqref{EQ-HK-halfspace}. 

The characterization of equality case in \eqref{EQ-HK-halfspace} follows from the classical one. Precisely, since the equality holds throughout the argument, the arithmetic mean-geometric mean (AM-GM) inequality assures the umbilicity of $\S$, and it follows that $\S$ is a spherical cap.  Apparently,
the contact angle of a spherical cap with a hyperplane is a constant, say $\theta$. It is easy to see that $\theta=\theta_0$,
and hence $\S$ must be a $\theta_0$-capillary spherical cap. 
Conversely, when $\S$ is a $\theta_0$-capillary spherical cap, then $H$ is a positive constant. By virtue of the Minkowski formula \eqref{mink-formula} for $r=1$, we see that equality in \eqref{EQ-HK-halfspace} holds.
\end{proof}

Let us close this section with a remark. In the proof of $\Omega\subset\zeta(Z)$, our choice of the touching balls is enlightened by the following observation. 
\begin{remark}[Foliation of $\theta_0$-capillary hypersurfaces]\label{Rem-foliation}
\normalfont In \cite{MR91}, to prove the Heintze-Karcher inequality for closed hypersurfaces, one shall `sweepout' the domain $\Omega$ by a foliation around any point $p\in\Omega$, whose leaves are level-sets of the distance function to $p$. The key point is that, such `sweep-outs' coincides with the domain $\Omega$, if and only if $\Omega$ is a ball and $p$ is chosen to be the center.

In view of this, our choice of foliation in the capillary case is thus clear; we want to `sweepout' the $\theta_0$-ball with the foliation, whose leaves are $\theta_0$-spherical caps(as illustrated in \cref{Fig-capillaryfoliation}). 
\end{remark}

\begin{figure}[htbp]
    \begin{minipage}[t]{0.5\linewidth}
        \centering
        \includegraphics[width=\textwidth]{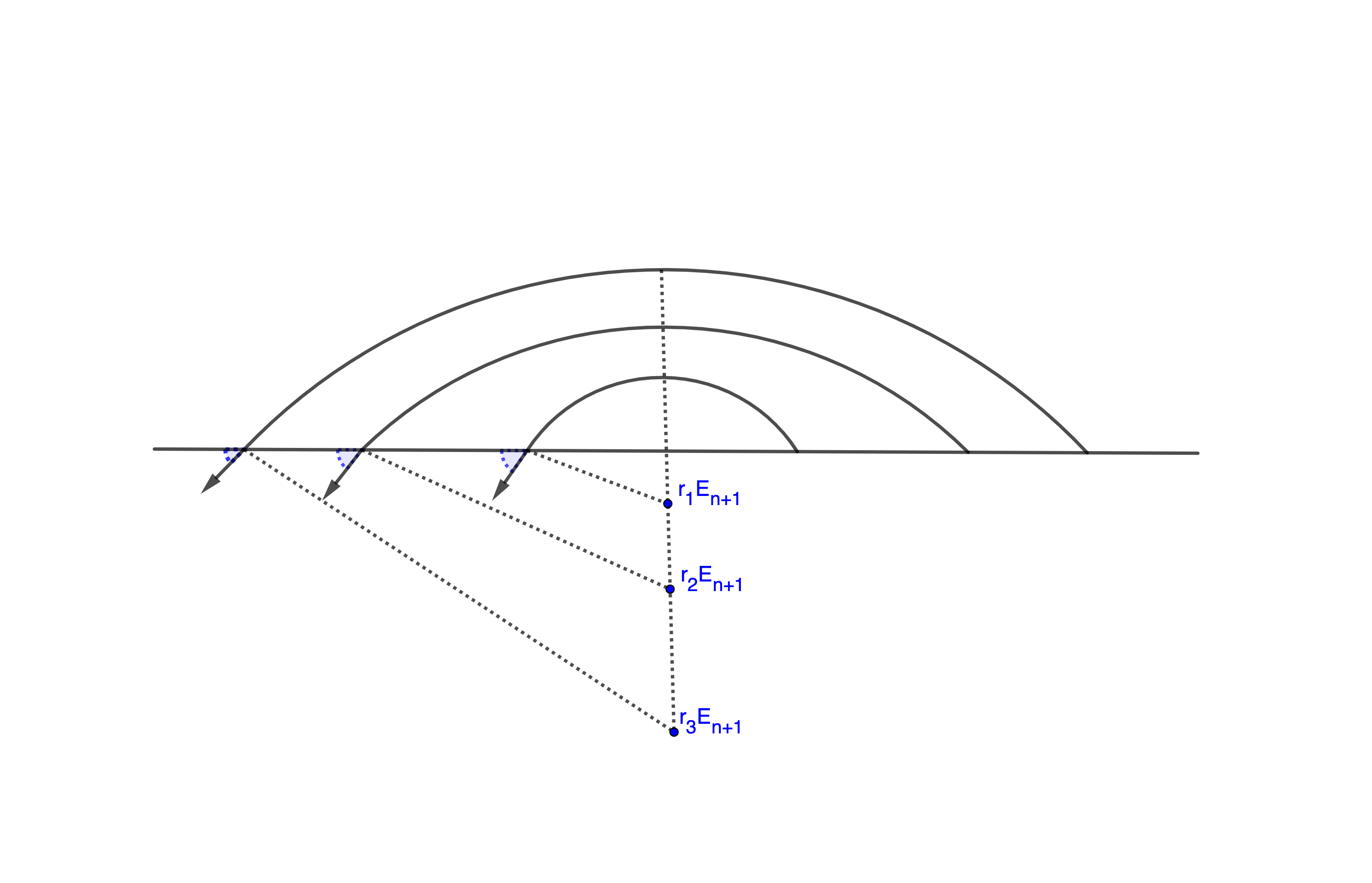}
        \centerline{(a) Sweepout of $\theta_0$-domain.}
    \end{minipage}%
    \begin{minipage}[t]{0.5\linewidth}
        \centering
        \includegraphics[width=\textwidth]{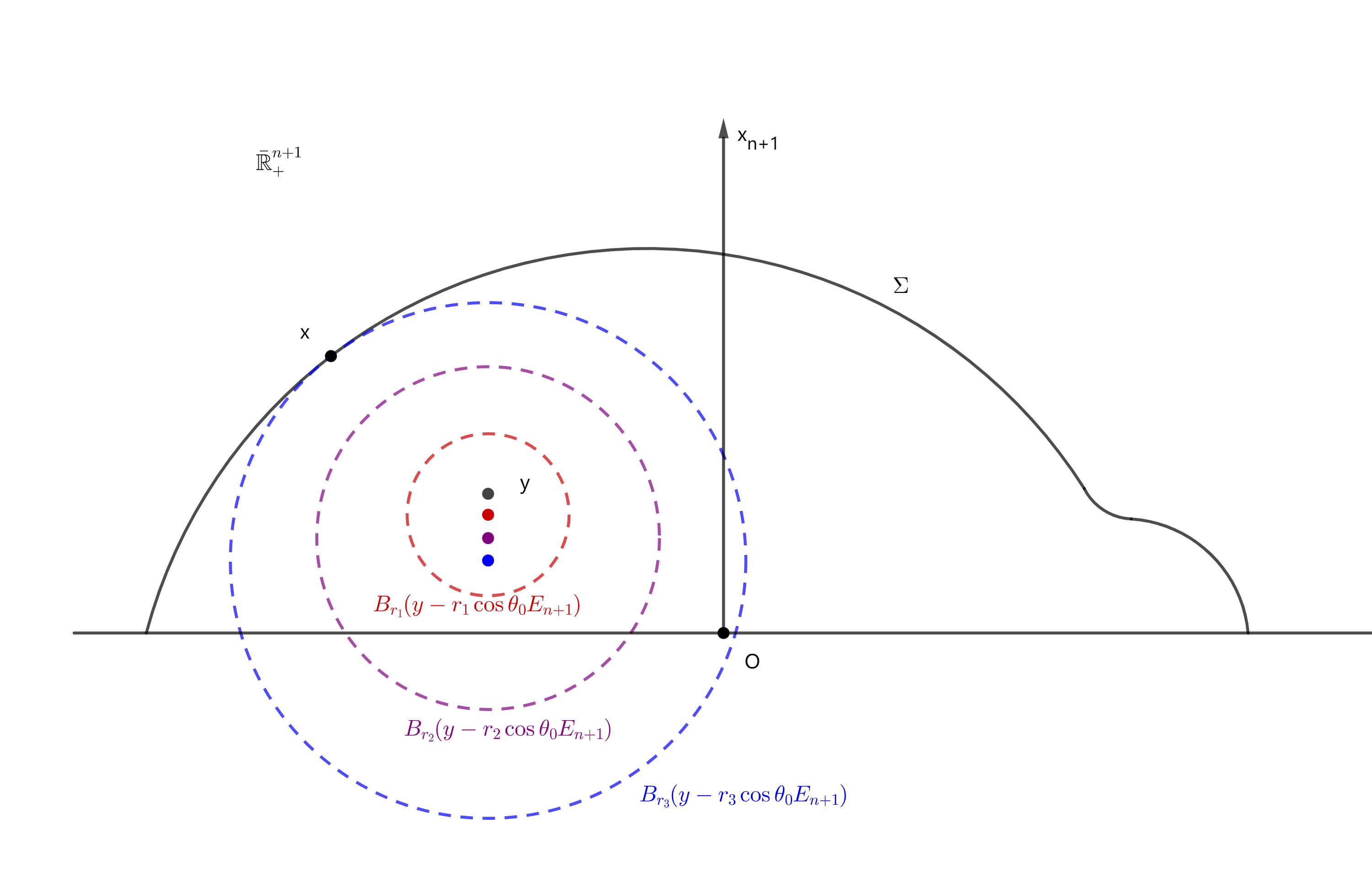}
        \centerline{(b) Sweepout of generic domain.}
    \end{minipage}
    \caption{Capillary foliation.}
    \label{Fig-capillaryfoliation}
\end{figure}

\

\section{Heintze-Karcher Inequality in a Wedge}\label{Sec4}
In this section, we prove \cref{Thm-HK-wedge} and \cref{Thm-HK-wedge-2} in a wedge $\mfW$, following largely from the proof of the Heintze-Karcher-type inequality presented  in the previous section.
\begin{proof}[Proof of \cref{Thm-HK-wedge}]
Let $\S\subset \mfW$ be a compact embedded hypersurface with $\vec{\theta}(x)$-capillary boundary,
where $\theta^i(x)\leq\theta^i_0$ for each $i$ and every $x\in\Gamma$. As above, 
for any $x\in \S$, let $\{e_i\}$ be the set of  principal unit vectors of $\S$ at $x$ and $\{\kappa_i(x)\}$ the set of the corresponding principal curvatures.
Now we  define modified parallel hypersurfaces by 
\begin{align*}
    &Z=\left\{(x,t)\in\S\times\mbR:0<t\leq\frac{1}{\max{\kappa_i(x)}}\right\},\\
    &\zeta(x,t)=x-t\left(\nu(x)+\mfk_0\right),\text{ }(x,t)\in Z.
\end{align*}

As in \cref{Thm-HK-halfspace}, we shall show that $\O\subset\zeta(Z)$. Let $y\in \O$. We consider the sphere foliation $\{S_r(y+r\mfk_0)\}_{r\geq0}$. By virtue of \cref{Lem-k0<=1-1}, there exists some $r_y>0$ such that $S_{r_y}(y+r_y\mfk_0)$ touches $\S$ from the interior at a first touching point $x\in\S$. 

\textbf{Case 1.} $x\in\mathring{\S}.$  We can get $y\in\zeta(Z)$, as argued in {\bf Case 1}, \cref{Thm-HK-halfspace}. 



\textbf{Case 2.} $x\in\p\S$. By assumption \eqref{away-edge},  $x\in\p\S\cap \mathring{P}_i$ for some $i$.
We will rule out this case again by virtue of the capillarity of $\S$. 
 In this case, $S_{r_y}(y+r_y\mfk_0)\cap P_i$ is tangent to $\S\cap P_i$, and the touching angle of $S_{r_y}(y+r_y\mfk_0)$ with $P_i$ at $x\in\Gamma_i$ must be smaller than $\theta^i(x)$, and it follows from the geometric relation that $y$ lies outside $\mfW$. Precisely, up to a rotation, we may assume that the touching plane is $\left\{x_{n+1}=0\right\}$, say $P_1$, and we denote by $\bar\theta^1(x)$ the touching angle, satisfying $\bar\theta^1(x)\leq\theta^1(x)$, due to the first touch.
From the geometric relation(see \cref{Fig-firsttouching-wedge}), we find, $y+r_y\left(\mfk_0-\cos\bar\theta^1(x)\bar N_1\right)\in P_1$, the angle relation $\bar\theta^1(x)\leq\theta^1(x)\leq\theta^1_0<\pi$ then implies that $y\in\mbR^{n+1}\setminus\mfW$ \footnote{Notice that $\left<\mfk_0,\bar N_1\right>=\cos\theta_0^1$, which means moving along $\mfk_0$ with distance $r_y$ is indeed moving along $\bar N_1$ with distance $r_y\cos\theta^1$.}, which contradicts to the fact that $y\in\O\subset\mfW$. 
Therefore, we complete the proof that $\O\subset\zeta(Z)$.
\begin{figure}
	\centering
	\includegraphics[width=10cm]{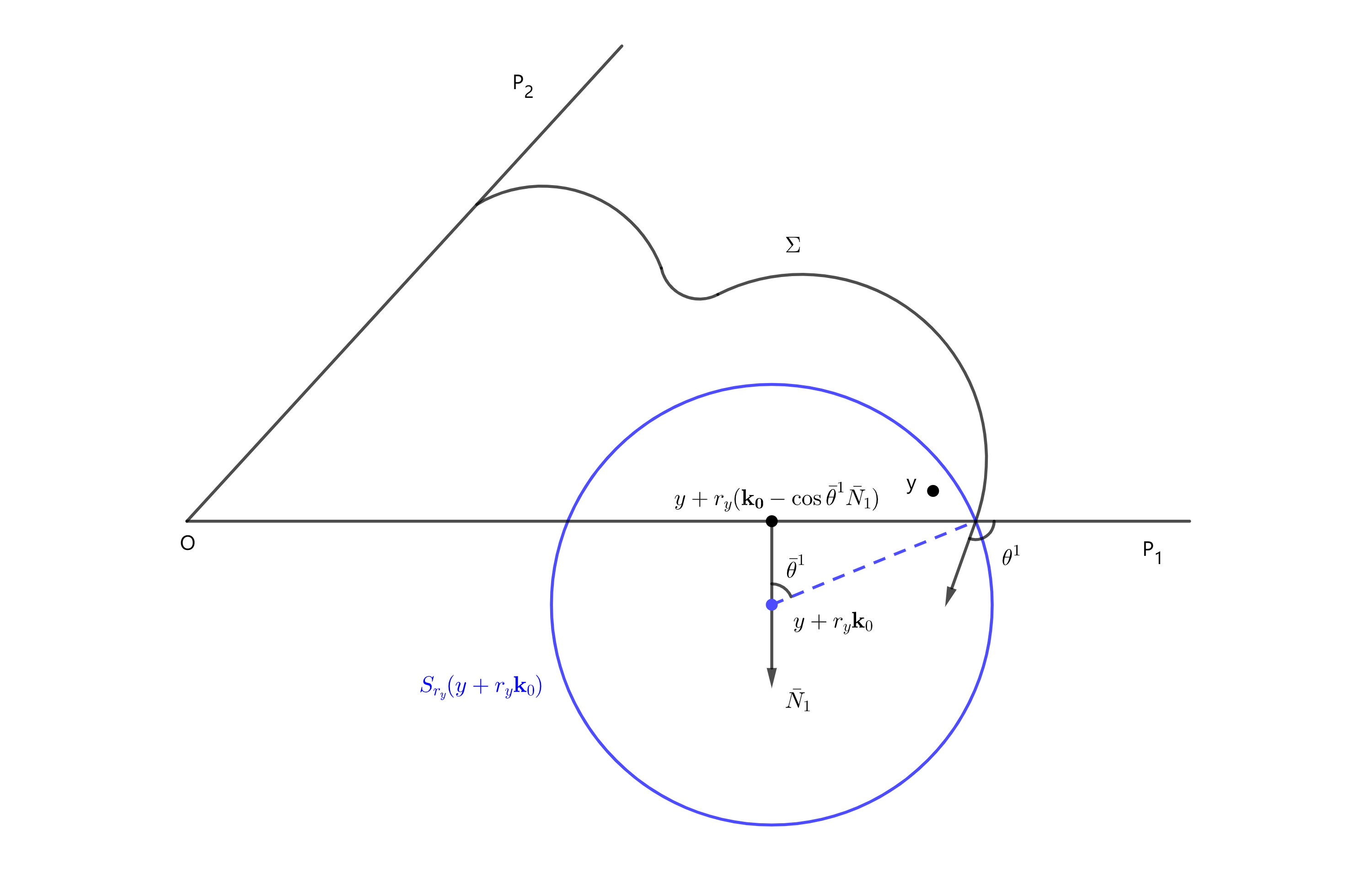}
	\caption{Touching supporting hyperplanes in the interior.}
	\label{Fig-firsttouching-wedge}
\end{figure}


By a simple computation as \cref{Thm-HK-halfspace}, we see, the tangential Jacobian of $\zeta$ along $Z$ at $(x,t)$ is just
$${\rm J}^Z\zeta(x,t)= (1+\<\nu, \mfk_0\>)\prod_{i=1}^n(1-t\kappa_i).$$
By a similar argument as \cref{Thm-HK-halfspace}, we conclude
\begin{align}
 \vert\O\vert\le\frac{n}{n+1}\int_\S \frac{(1+\<\nu, \mfk_0\>)}{H}\rd A.
\end{align}

As proved in \cref{Thm-HK-halfspace}, if equality in \eqref{EQ-HK-wedge} holds, then $\S$ is umbilical, and hence spherical. To see that $\S$ must be a $\vec\theta_0$-capillary spherical cap, we need a different argument.
As equalities hold throughout the argument, we have
\begin{align}\label{eq-Omega-Zeta}
    \vert\Omega\vert=\vert\zeta(Z)\vert=\int_{\zeta(Z)}\mcH^0(\zeta^{-1}(y))\rd y.
\end{align}
Moreover, \eqref{defn-nu} implies: for any $x\in\p\S\cap P_i$, there holds
	\begin{align}
	    \left<-(\nu(x)+\mfk_0),\bar N_i\right>=\cos\theta^i(x)-\cos\theta^i_0\geq0.
	\end{align}  Recall that $\bar N_i$ is the outwards pointing unit normal of $P_i$, and we have already showed in the previous proof that $\Omega\subset\zeta(Z)$. Thus, if $\theta^i(x)<\theta^i_0$ strictly at some $x\in\S\cap P_i$, then it must be that $\vert\Omega\vert<\vert\zeta(Z)\vert$, which contradicts to \eqref{eq-Omega-Zeta}.
	In other words, for any $x\in\p\Sigma\cap P_i$, we must have $\theta^i(x)=\theta^i_0$, this shows that $\S$ must be a $\vec\theta_0$-capillary spherical cap.
\end{proof}

\begin{proof}[Proof of \cref{Thm-HK-wedge-2}]
We note that the proof follows closely  the one of \cref{Thm-HK-wedge}. 
Precisely, thanks to \cref{Lem-k0<=1-1}, we can use our foliation $\{S_r(y+r\mfk_0)\}_{r\geq0}$ to test the surjectivity of $\zeta$, i.e., $\Omega\subset\zeta(Z)$. One subtle point 
 we have to be concerned with is 
that the first touching of $S_{r_y}(y+r_y\mfk_0)$ with $\S$ might occur at $\S\cap P_1\cap P_2$.
Here we manage to  rule   this case out by a rather subtle analysis.
{In view of \cref{higher-dim} below, We only need to consider the 3-dimensional case, i.e., $n+1=3$, in which case $P_1\cap P_2$ is a line.}

In the following we use $\nu_{B_r}(x)$ to denote the outward unit normal  of $S_{r_y}(y+r_y\mfk_0)$, $T_x \Sigma$ to denote the tangent plane of $\Sigma$ at $x$, and  $l$ to denote a unit vector generating the line  $P_1\cap P_2$. 
Recall that $\bar N_i$ is the outward unit normal of $P_i$, $i=1, 2.$ 

\

{\bf Case 1.} $\bar N_i\parallel\nu_{B_r}(x)$ for some $i$.

Without loss of generality, we assume $\bar{N_2}$
 is parallel to $\nu_{B_r}(x)$. Hence the sphere $S_{r_y}(y+r_y\mfk_0)$ touches the plane $P_2$  only at the point $x$. Since $\bar{N_1}$ and $\bar{N_2}$ are not parallel, then $\bar{N_1}$
 is not parallel to $\nu_{B_r}$,
 thus the intersection of $S_{r_y}(y+r_y\mfk_0)$ with $P_1$ must be a circle. Since $S_{r_y}(y+r_y\mfk_0)\cap P_2=\{x\}$, we know that  $S_{r_y}(y+r_y\mfk_0)\cap P_1$ touches $P_1\cap P_2$ only at $x$. Hence $l$ is the tangential vector of $S_{r_y}(y+r_y\mfk_0)\cap P_1$ at $x$. Since $x$ is the first touching point of $B_{r_y}(y+r_y\mfk_0)$ with $\Sigma$ from the interior, we see that $l$ is also the tangential vector of $\partial \Sigma^1=\S\cap P_1$ at $x$ (see \cref{Fig1-thm1.5}).
  \begin{figure}[H]
	\centering
	\includegraphics[width=8cm]{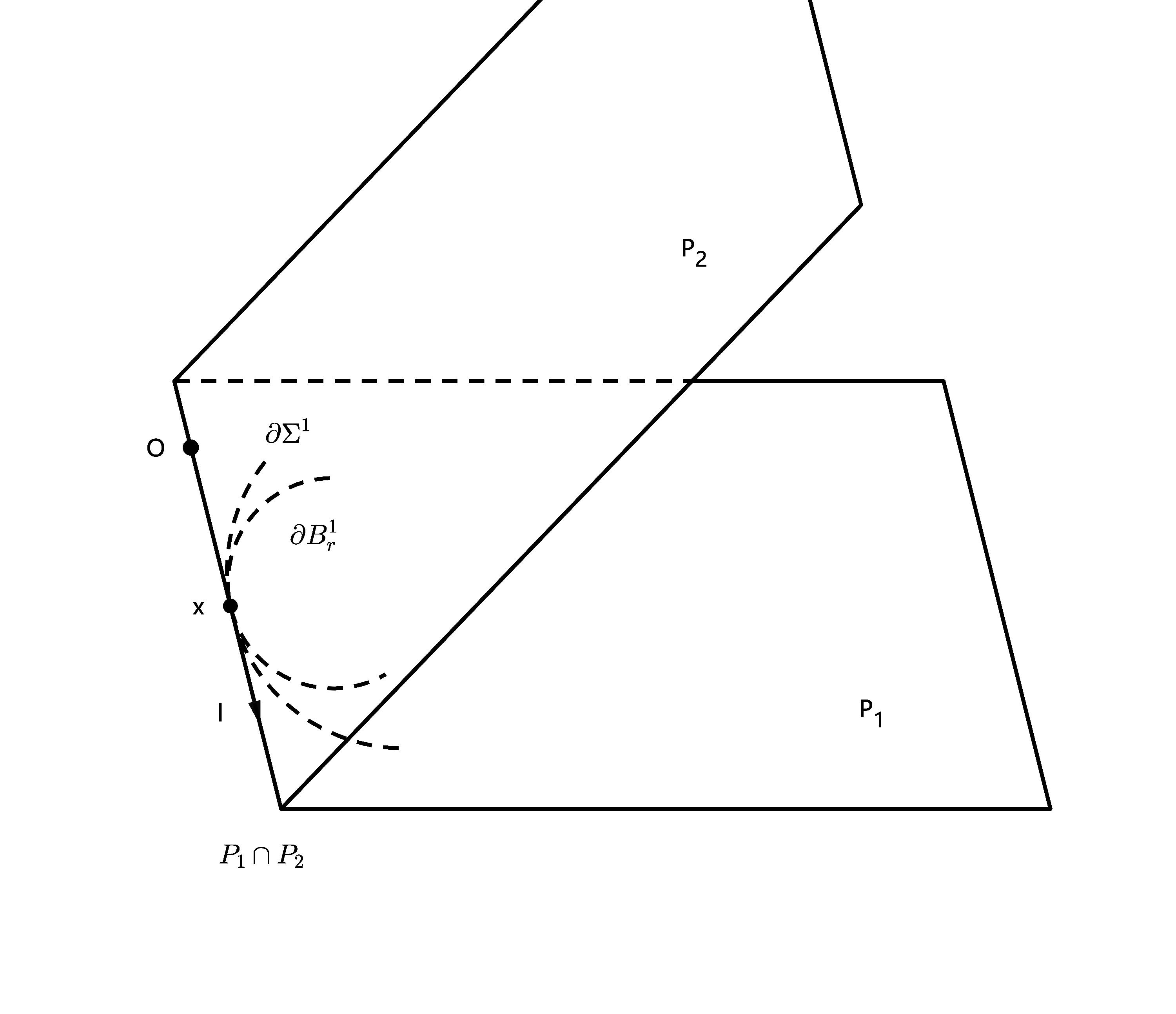}
	\caption{}
	\label{Fig1-thm1.5}
\end{figure}
It follows that $S_{r_y}(y+r_y\mfk_0)\cap P_1$ is tangent to $\partial\S^1$ at $x$. 
 We are now in the same situation as in {\bf Case 2}, \cref{Thm-HK-halfspace} or \cref{Thm-HK-wedge}. Hence the contact angle of $S_{r_y}(y+r_y\mfk_0)$ with $P_1$ at $x$ must be smaller than $\theta^1(x)$, which is a contradiction, since the contact angle of $S_{r_y}(y+r_y\mfk_0)$ with $P_1$ is $\theta^1_0 \ge \theta^1(x)$ by assumption.

\
 
{\bf Case 2.} Neither $\bar N_i$ is parallel to $\nu_{B_r}(x)$, i.e., $\nu_{B_r}(x)\wedge \bar N_i\neq 0$ for $i=1,2$.

In this case,  $S_{r_y}(y+r_y\mfk_0)\cap P_i(i=1,2)$ must be circles.

\

{\bf Case 2.1.} $P_1\cap P_2\subset T_x \Sigma $.

Since $P_1$ is not parallel to $P_2$, 
there exists exactly one of $P_i (i=1,2)$, say $P_1$, which does not coincide with $T_x\Sigma$. Then $\Sigma$ is  not tangent to $P_1$ at $x$ and $\partial \S^1+\Sigma\cap P_1$ is a curve near $x$. Since $l\in T_x\Sigma$ and $l\perp \bar N_1$, we see that $l$ is the tangential vector of  $\partial\Sigma^1$. 
Since $x$ is the first touching point of $S_{r_y}(y+r_y\mfk_0)$ with $\Sigma$, $l$ is also the tangential vector of $S_{r_y}(y+r_y\mfk_0)\cap P_1$. { Hence $S_{r_y}(y+r_y\mfk_0)\cap P_1$ is tangent to $\S\cap P_1$ at $x$.}
We are again in the position as in {\bf Case 2}, \cref{Thm-HK-halfspace} or \cref{Thm-HK-wedge}, and consequently we get a contradiction.


\

{\bf Case 2.2.} $P_1\cap P_2\not\subset T_x\Sigma$.

In this case, $\bar N_i$, $i=1, 2$, cannot be parallel to $\nu(x)$. 
For simplicity of notation, in the following we use $\nu, \nu_{B_r}, \theta^i$ to indicate $\nu(x), \nu_{B_r}(x), \theta^i(x)$, respectively, and 
we adopt the notation $$\partial \Sigma^i=\Gamma_i=\Sigma\cap P_i,\quad\partial B_r^i= S_{r_y}(y+r_y\mfk_0)\cap P_i. $$
Let $T_{\partial\Sigma^i}$ be the unit tangent vector of $\partial \Sigma^i$ at $x$ such that $\<T_{\partial\Sigma^i},\bar{N_j}\> >0$ for $i\not=j$ and $T_{\partial B_r^i}$ be the unit tangent vector of  $\partial B_r^i$ at $x$ such that $\<T_{\partial B_r^i},\bar{N_j}\> >0$ for $i\not=j$. 
Since $\nu$ and $\bar{N_i}$ are perpendicular to $T_{\partial\Sigma^i}$, we know that
$T_{\partial\Sigma^i}$ is parallel to $\nu\wedge \bar{N_i}$. Similarly, since $\nu_{B_r}$ and $\bar{N_i}$ are perpendicular to $T_{\partial B_r^i}$, we have that $T_{\partial B_r^i}$ is parallel to $\nu_{B_r}\wedge \bar{N_i}$.





Without loss of generality, we may assume that the origin $0\in  \partial \O$ and $x\ne 0$. Let $l$ be the unit tangent vector $\frac{x}{|x|}$.
 With such choice of $l$, we claim that $\<l, \nu\>> 0$ and $\<l,\nu_{B_r}\>>0$. Indeed, recall that by capillarity, $\nu$ is expressed as
  \begin{align*}
      \nu=-\cos \theta^i \bar N_i+\sin \theta^i \bar \nu_i,\quad i=1,2. 
  \end{align*}
  Here  $\bar \nu_i$ is the outward unit normal vector of $\partial \Sigma^i$ in $P_i$ at $x$.
  Since $\partial \Sigma^i$ lies on the same side of the line $P_1\cap P_2$ in $P_i$, it yields that $\<l,\bar\nu_i\> >0$, thus we get  
  \begin{align*}
  \<l, \nu\>=-\cos\theta^i \<l,\bar{N_i}\>+\sin\theta^i\<l,\bar{\nu_i}\>=\sin\theta^i\<l,\bar{\nu_i}\>> 0.
\end{align*}
We will show that $\<l,\nu_{B_r}\> >0$.
Since  $S_{r_y}(y+r_y\mfk_0)$ touches $\S$ from the interior at $x$, we have
$B_{r_y}(y+r_y\mfk_0)\cap P_i\subset \bar\Omega\cap P_i$ for $i=1,2$.
Combining with $0\in\bar \Omega\cap P_1\cap P_2$, it follows that $l$ is pointing outward of $B_r$ at $x$, see \cref{Fig2-thm1.5}, thus
 we obtain $\<l,\nu_{B_r}\>>0$.
   \begin{figure}[H]
	\centering
	\includegraphics[width=12cm]{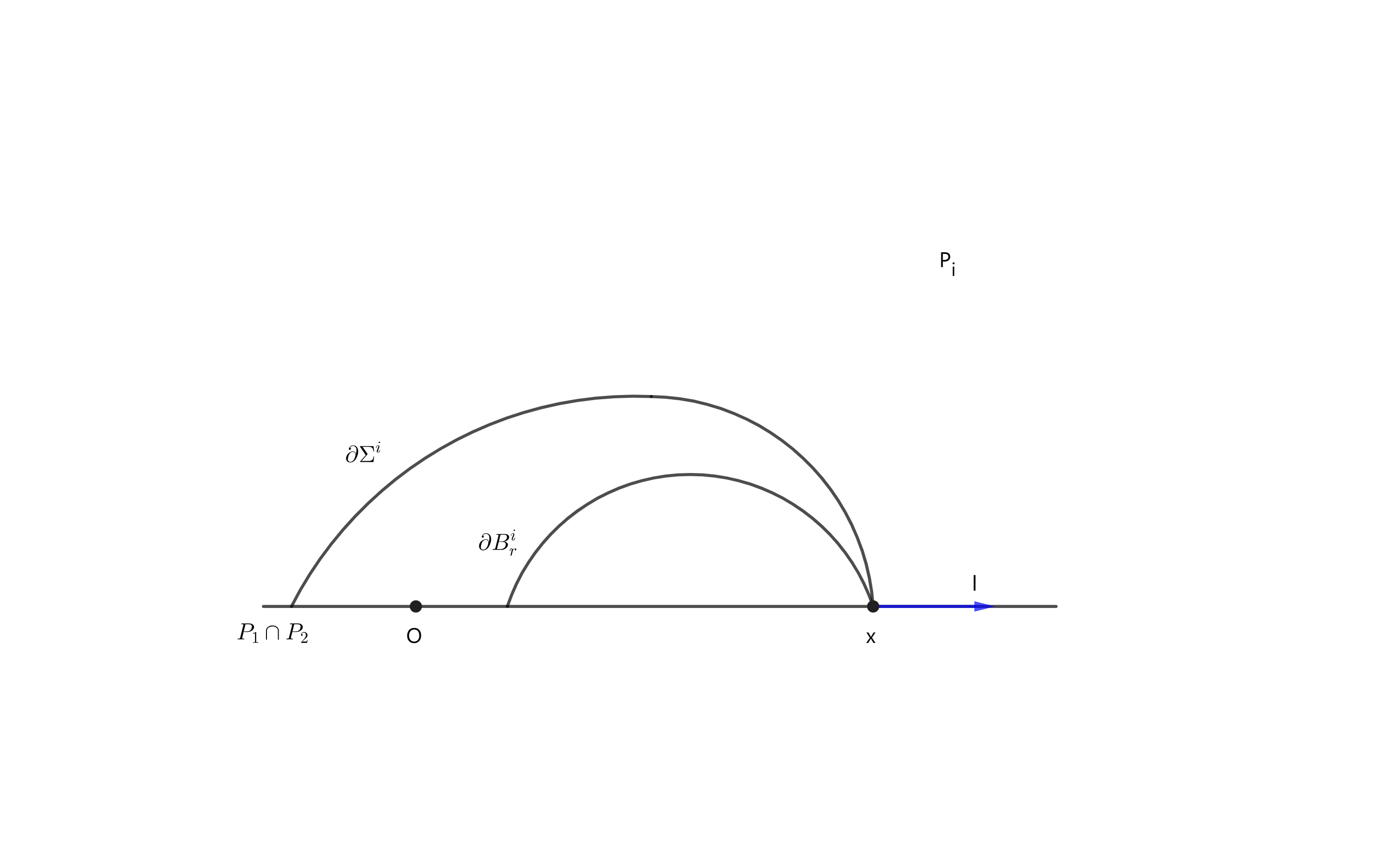}
	\caption{}
	\label{Fig2-thm1.5}
\end{figure}

Note that $\bar{N_1}$ and $\bar{N_2}$ are perpendicular to $l$. Without loss of generality, we may assume 
$$l=-\frac{\bar{N_1}\wedge\bar{ N_2}}{\vert \bar{N_1}\wedge \bar{N_2}\vert}.$$
A direct computation then yields
\begin{align*}
    \<\nu\wedge\bar{N_1},\bar{N_2}\>=\<\bar{N_1}\wedge\bar{N_2},\nu\>=-\vert\bar{N_1}\wedge\bar{N_2}\vert\<l,\nu\> <0,
\end{align*}
 which implies the following fact: the two vectors $T_{\partial\Sigma^1}$ and $\nu\wedge \bar{N_1}$ are in the opposite direction. Therefore,
\begin{align*}
T_{\partial \Sigma^1}=-\frac{\nu\wedge\bar{N_1}}{\vert \nu\wedge\bar{N_1}\vert}.
\end{align*}
By a similar argument, we obtain
\begin{align*}
T_{\partial \Sigma^2}=\frac{\nu\wedge\bar{N_2}}{\vert \nu \wedge\bar{N_2}\vert},\quad
T_{\partial B_r^1}=-\frac{\nu_{B_r} \wedge\bar{N_1}}{\vert \nu_{B_r} \wedge\bar{N_1}\vert},\quad 
T_{\partial B_r^2}=\frac{\nu_{B_r}\wedge\bar{N_2}}{\vert \nu_{B_r}\wedge\bar{N_2}\vert}.
\end{align*}
Thanks to the fact that $x$ is  the first touching point, we must have (see \cref{Fig-3.2})
\begin{align}\label{first-touch-ineq}
    \<T_{\partial B_r^i},l\>\geq \<T_{\partial \Sigma^i},l\>, \quad i=1,2. 
\end{align}
  \begin{figure}[H]
	\centering
	\includegraphics[width=12cm]{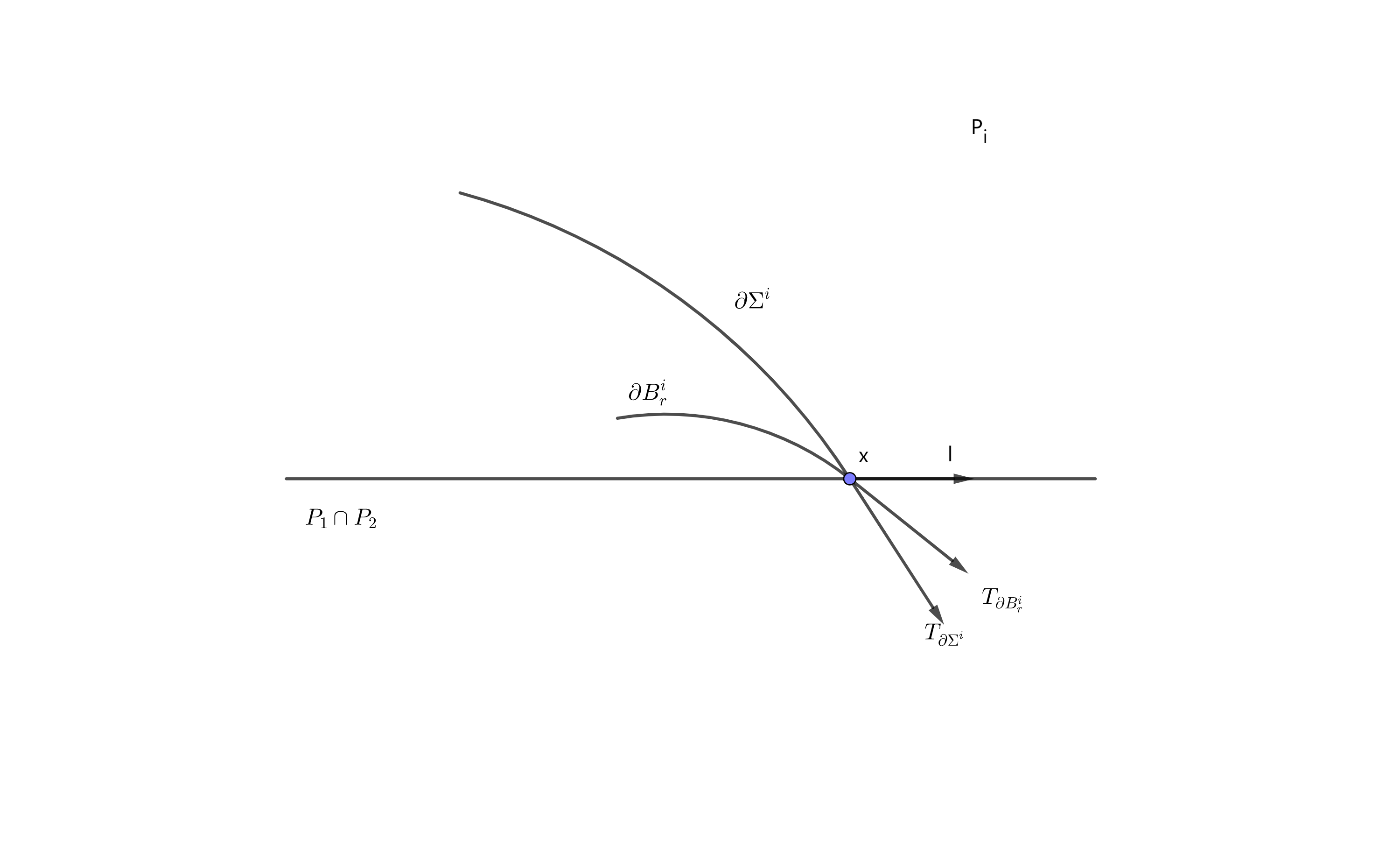}
	\caption{}
	\label{Fig-3.2}
\end{figure}

Let $\eta^i\in (0,\pi)$ be such that $\<\nu_{B_r},\bar{N_i}\>=-\cos\eta^i$ for each $i=1,2$.
We can carry out the following computations.
\begin{align*}
   \<T_{\partial B_r^1},l\> =&\<-\frac{\nu_{B_r}\wedge\bar{N_1}}{\vert\nu_{B_r}\wedge\bar{N_1}\vert},-\frac{\bar{N_1}\wedge\bar{N_2}}{\vert\bar{N_1}\wedge\bar{N_2}\vert}\>\notag\\ 
   =&-\frac{\<\nu_{B_r},\bar{N_2}\>-\<\nu_{B_r},\bar{N_1}\>\<\bar{N_1},\bar{N_2}\>}{\vert \nu_{B_r}\wedge \bar{N_1}\vert\cdot
   \vert \bar{N_1}\wedge\bar{N_2}\vert}\notag\\
   =&\frac{\cos\eta^2+\cos\eta^1\cos\alpha}{\sin\eta^1\sin\alpha}.
\end{align*}
Similarly,
\begin{align*}
   \<T_{\partial B_r^2},l\>  
   =\frac{\cos\eta^1+\cos\eta^2\cos\alpha}{\sin\eta^2\sin\alpha},
\end{align*}
\begin{align*}
\<T_{\partial \Sigma^1},l\> =\frac{\cos\theta^2+\cos\theta^1\cos\alpha}{\sin\theta^1\sin\alpha},
\end{align*}
\begin{align*}
\<T_{\partial\Sigma^2},l\>=\frac{\cos\theta^1+\cos\theta^2\cos\alpha}{\sin\theta^2\sin\alpha}.
\end{align*}
Plugging into \eqref{first-touch-ineq}, we thus obtain
\begin{align}\label{ineq1-first-touch}
    \frac{\cos\eta^2+\cos\eta^1\cos\alpha}{\sin\eta^1\sin\alpha}\geq\frac{\cos\theta^2+\cos\theta^1\cos\alpha}{\sin\theta^1\sin\alpha},
\end{align}
\begin{align}\label{ineq2-first-touch}
    \frac{\cos\eta^1+\cos\eta^2\cos\alpha}{\sin\eta^2\sin\alpha}\geq\frac{\cos\theta^1+\cos\theta^2\cos\alpha}{\sin\theta^2\sin\alpha}.
\end{align}
A crucial observation is that \eqref{ineq1-first-touch} is equivalent to (see the end of \cref{App-1})
\begin{align}\label{ineq1.1-first-touch}
    \sin\theta^1 \big(\cos\eta^2&-\cos(\theta^2-\theta^1+\eta^1)\big)\notag\\
    +&\big(\cos(\theta^2-\theta^1)+\cos\alpha\big)\sin(\theta^1-\eta^1)\geq 0.
\end{align}

On the one hand, we have
\begin{align}\label{ineq-position}
    \<x,\bar{N_1}\>
    =&\<x-(y+r_y \mfk_0),\bar{N_1}\>+\<y+r_y \mfk_0,\bar{N_1}\>\notag\\
    =&r_y \<\nu_{B_r},\bar{N_1}\>+\<y,\bar{N_1}\>+r_y \<\mfk_0,\bar{N_1}\>.
\end{align}    
  Since $x\in P_1$ and $y\in\mathring{\mfW}$,
  we have: $\<x,\bar{N_1}\>=0$ and $\<y,\bar{N_1}\><0$. It follows from \eqref{ineq-position}  that
  \begin{align*}
   \<\nu_{B_r},\bar{N_1}\>>-\<\mfk_0,\bar{N_1}\>=-\cos \theta_0^1.
  \end{align*}
By our angle assumption $\theta^1\leq\theta_0^1$, we get
\begin{align*}
      -\cos\eta^1=\<\nu_{B_r},\bar{N_1}\> >-\cos\theta_0^1\geq-\cos\theta^1,
\end{align*}
which implies \begin{align}\label{xeq2}
\eta^1>\theta^1.
\end{align}
On the other hand, since $P_1\cap P_2\not\subset T_x\Sigma$, by \cref{Lem-k<=1}, we see that $ |\mfk(x)| <1$, where 
$\mfk(x):=\sum\limits_{i=1}^2 c_i(x)\bar N_i$,
satisfying $\<\mfk(x), \bar N_i\>=\cos\theta^i$. In view of \cref{lem-condition}, it implies that
\begin{align}\label{xeq3}
    \cos(\theta^2-\theta^1)+\cos\alpha>0.
\end{align} 
From \eqref{xeq2} and \eqref{xeq3}, we know that $$\big(\cos(\theta^2-\theta^1)+\cos\alpha\big)\sin(\theta^1-\eta^1)<0.$$ Combining with  \eqref{ineq1.1-first-touch}, we find 
\begin{align}\label{ineq1-contradition}
    \cos\eta^2-\cos(\theta^2-\theta^1+\eta^1)>0.
\end{align}
By a similar argument we obtain from \eqref{ineq2-first-touch} that
\begin{align}\label{ineq2-contradition}
\eta^2>\theta^2, \end{align} 
and
\begin{align}\label{ineq3-contradition}
    \cos\eta^1-\cos(\theta^1-\theta^2+\eta^2)>0.
\end{align} 
We may assume $\theta^2\geq\theta^1$ without loss of generality, then it follows from \eqref{ineq2-contradition} that 
\begin{align*}
  0<\theta^1<\theta^1-\theta^2+\eta^2\leq \eta^2<\pi.
\end{align*} 
Therefore, back to \eqref{ineq1-contradition}, we deduce that
\begin{align}\label{xeq4}
    \eta^1<\theta^1-\theta^2+\eta^2,
\end{align}
In the meanwhile, from \eqref{ineq3-contradition}, we deduce
\begin{align}\label{xeq5}
\eta^2<\theta^2-\theta^1+\eta^1.
\end{align}
Apparently, \eqref{xeq4} and \eqref{xeq5} lead to a contradiction.

In conclusion, we have showed that the first touching point cannot occur at any $x\in \S\cap P_1\cap P_2$. The rest of the proof follows similarly from \cref{Thm-HK-wedge}.
\end{proof}
\begin{remark}\label{higher-dim}
\normalfont
We make a remark for the case of higher dimensions $\mathbb{R}^{n+1}$. In this case, $P_1\cap P_2$ is a $(n-1)$-plane and $\p\S\cap P_1\cap P_2$ is a closed $(n-2)$-submanifold, playing the role as $\p(\p\S\cap P_1)$ or $\p(\p\S\cap P_2)$. By letting $l$ be the unique unit vector in $P_1\cap P_2$ which is orthogonal to $\p\S\cap P_1\cap P_2$, and $T_{\p\S^i}$ be the tangent vector of $\p\S\cap P_i$ at $x$ which is orthogonal to $\p\S\cap P_1\cap P_2$, and $T_{\p B_r^i}$ be the tangent vector of $\p B_r\cap P_i$ at $x$ which is orthogonal to $\p\S\cap P_1\cap P_2=\p B_r\cap P_1\cap P_2$ ( because $x$ is the first touching point), we may reduce the problem to the $3$-dimensional case. In other words, all the vectors at $x$ we considered in the proof lie in the orthogonal $3$-subspace of $T_x(\p\S\cap P_1\cap P_2)$.
\end{remark}



\

\section{Alexandrov-Type Theorem}\label{Sec5}
It is well-known that there exists at least one elliptic point for a closed embedded hypersurface in $\mathbb{R}^{n+1}$. Recall that an elliptic point is a point at which the principal curvatures  are all positive with respect to the outward unit normal. We shall show that this fact  is true for capillary hypersurfaces in the half-space, or in a wedge whenever condition \eqref{wedge-condition} holds. Recall that for a wedge case, the existence of an elliptic point is no longer true if without any restriction. An example is easy to be found among ring type capillary surfaces a wedge. See a figure  in \cite{McCuan97}.  

We first need the following lemma (compare to \cref{Rem-foliation}).

\begin{lemma}\label{foliation}
Let $\mfW$ be a generalized wedge and $y\in \bigcap\limits_{i=1}^L P_i$. If $|\mfk_0|<1$, then for any $r>0$, the sphere $S_r(y+r \mfk_0)$ intersects $P_i$ at angle $\theta_0^i$. In particular, in $\overline{\mbR^{n+1}_+}$, the sphere $S_r(y-r\cos\theta_0 E_{n+1})$ intersects $\p \mbR^{n+1}_+$ at angle $\theta_0$.
    \end{lemma}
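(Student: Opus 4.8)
The plan is to verify directly that each sphere $S_r(y+r\mfk_0)$ meets $P_i$ at the prescribed angle $\theta_0^i$, using only the defining property $\langle \mfk_0,\bar N_i\rangle=\cos\theta_0^i$ of the vector $\mfk_0$ from \eqref{k0}. Fix $i\in\{1,\dots,L\}$ and fix $r>0$. First I would locate the intersection $S_r(y+r\mfk_0)\cap P_i$: since $y\in\bigcap_{j=1}^LP_j\subset P_i$ and $\langle\mfk_0,\bar N_i\rangle=\cos\theta_0^i$, the center $p:=y+r\mfk_0$ has signed distance $\langle p-y,\bar N_i\rangle=r\cos\theta_0^i$ to the hyperplane $P_i$. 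Because $|\cos\theta_0^i|\le|\mfk_0|<1$, this distance is strictly less than $r$ in absolute value, so the intersection $S_r(p)\cap P_i$ is a genuine $(n-1)$-sphere (nonempty, not a single point), and hence the contact angle is well-defined at every point of it.

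Next I would compute that angle. Let $x\in S_r(p)\cap P_i$. The outward unit normal to $S_r(p)$ at $x$ is $\nu_{B_r}(x)=\tfrac{1}{r}(x-p)=\tfrac{1}{r}(x-y-r\mfk_0)$. Since $x,y\in P_i$, the vector $x-y$ is tangent to $P_i$, so $\langle x-y,\bar N_i\rangle=0$, and therefore
\begin{align*}
\langle \nu_{B_r}(x),\bar N_i\rangle=\tfrac{1}{r}\langle x-y-r\mfk_0,\bar N_i\rangle=-\langle\mfk_0,\bar N_i\rangle=-\cos\theta_0^i.
\end{align*}
This is exactly the relation \eqref{defn-nu} satisfied by the outward normal of a $\theta_0^i$-capillary hypersurface along $P_i$ (with $\bar N_i$ the outward normal to $P_i$ in $\mfW$), so the spherical cap bounded by $S_r(p)$ on the $\mfW$-side of $P_i$ makes contact angle $\theta_0^i$ with $P_i$. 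Since $i$ was arbitrary, $S_r(y+r\mfk_0)$ meets every $P_i$ at angle $\theta_0^i$. The half-space statement is the case $L=1$, $\mfk_0=-\cos\theta_0\,E_{n+1}$, $|\mfk_0|=|\cos\theta_0|<1$.

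I do not expect any genuine obstacle here: the lemma is essentially the geometric interpretation of the algebraic definition of $\mfk_0$, and the only thing that needs the hypothesis $|\mfk_0|<1$ is the nondegeneracy of the intersection (ensuring the contact angle is actually attained rather than vacuous, e.g.\ the sphere being tangent to or disjoint from $P_i$). The mildly delicate point worth a sentence in the write-up is the orientation/sign bookkeeping — checking that $-\cos\theta_0^i$ with $\bar N_i$ the \emph{outward} normal to $P_i$ in $\mfW$ corresponds to the cap lying inside $\mfW$, matching the convention in \eqref{munu}–\eqref{defn-nu}; this is what identifies the contact angle as $\theta_0^i$ and not $\pi-\theta_0^i$.
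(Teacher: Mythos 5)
Your proof is correct and follows essentially the same route as the paper's: both compute the outward unit normal $\nu_{B_r}(x)=\frac{1}{r}\bigl(x-(y+r\mfk_0)\bigr)$ at a point $x$ of the intersection and use $x-y\in P_i$ to get $\langle\nu_{B_r}(x),\bar N_i\rangle=-\langle\mfk_0,\bar N_i\rangle=-\cos\theta_0^i$. Your Cauchy--Schwarz justification that $|\mfk_0|<1$ forces the intersection with $\mathring{P}_i$ to be a genuine $(n-1)$-sphere is a welcome elaboration of a step the paper only asserts.
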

    \begin{proof} First, if $|\mfk_0|<1$, it is easy to see that for any $r>0$, $S_r(y+r \mfk_0)$ intersects $\mathring{P}_i$.
    The outward unit normal to $S_r(y+r\mfk_0)$ at $x\in \mathring{P}_i$ is given by
    \begin{align*}
    \nu_{B_r}(x)=\frac{x-(y+r\mfk_0)}{r}.
\end{align*}
Since $x-y\in P_i$, we see that
$$\<\nu_{B_r}(x), \bar N_i\>=-\<\mfk_0,\bar N_i\>=-\cos\theta_0^i.$$
\end{proof}

\begin{proposition}\label{Prop-ell
		ipticpoint1}
	Let $\S\subset\overline{\mbR^{n+1}_+}$ be a smooth, compact, embedded $\theta_0$-capillary hypersurface. Then there exists at least one elliptic point on $\S$.
\end{proposition}
\begin{proof}
Let $\O$ be the enclosed region of $\S$ and $\p \mbR^{n+1}_+$, $T=\p\O\cap\p \mbR^{n+1}_+$, we fix a point $y\in \mathring{T}$. Consider the family of the open balls $B_r(y-r\cos\theta_0 E_{n+1})$. By \cref{foliation}, $S_r(y-r\cos\theta_0 E_{n+1})=\p B_r(y-r\cos\theta_0 E_{n+1})$ intersects with $\p \mbR^{n+1}_+$ at the angle $\theta_0$.  
Since $\S$ is compact, for $r$ large enough, $ \S\subset B_r(y-r\cos\theta_0 E_{n+1})$. Hence we can find the smallest $r$, say $r_0>0$, such that  $S_{r_0}(y-r_0\cos\theta_0 E_{n+1})$ touches $\S$ at a first time at some $x\in \S$. For simplicity, we abbreviate $S_{r_0}(y-r_0\cos\theta_0 E_{n+1})$ by $S_{r_0}$.

If $x\in\mathring{\S}$, then $\S$ and $S_{r_0}$ are tangent at $x$. 
If $x\in\p\S$, from the fact that both $\S$ and $S_{r_0}$ intersect with $\p \mbR^{n+1}_+$ at the angle $\theta_0$, we conclude again that   $\S$ and $S_{r_0}$ are tangent at $x$.

In both cases, we have that the principal curvatures of $\S$ at $x$ are bigger than or equal to $1/r_0>0$, which implies that $x$ is an elliptic point.
\end{proof}

\begin{proposition}\label{Prop-ell
ipticpoint2}
Let $\mfW\subset\mbR^{n+1}$ be a classical wedge. Let $\Sigma\subset\mfW$ be a smooth, compact, embedded $\vec\theta_0$-capillary surface.
If $\S\cap P_1\cap P_2\neq\emptyset$,
then there exists at least one elliptic point on $\S$.
\end{proposition}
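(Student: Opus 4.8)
The plan is to mimic the proof of \cref{Prop-ell	ipticpoint1}, but with the family of spheres replaced by the foliation $\{S_r(y+r\mfk_0)\}_{r\ge 0}$ adapted to the wedge, and to use the hypothesis $\S\cap P_1\cap P_2\ne\emptyset$ to guarantee that the required foliation actually exists. First I would observe that by \cref{Rem-k0}(iii) (which rests on \cref{Lem-k<=1}), the assumption $\S\cap P_1\cap P_2\ne\emptyset$ forces $|\mfk_0|\le 1$, and in fact we should pin down whether we get $|\mfk_0|<1$ or merely $|\mfk_0|\le 1$; if only $|\mfk_0|\le 1$, one still has that for every $r>0$ the sphere $S_r(y+r\mfk_0)$ with $y$ in the common edge $P_1\cap P_2$ meets each $P_i$ (possibly tangentially at the boundary circle), and by \cref{foliation} — whose proof goes through verbatim when $|\mfk_0|\le 1$ as long as the sphere genuinely intersects $\mathring P_i$ — it does so at the constant angle $\theta_0^i$. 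The point $y$ should be chosen on the edge $P_1\cap P_2$ itself, so that $x-y$ is automatically parallel to both $P_1$ and $P_2$ and the angle computation $\langle \nu_{B_r}(x),\bar N_i\rangle=-\langle\mfk_0,\bar N_i\rangle=-\cos\theta_0^i$ is valid on either face.

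Next I would run the shrinking-sphere argument: since $\S$ is compact, for $r$ large enough $\S\subset \overline{B_r(y+r\mfk_0)}$, so there is a smallest $r_0>0$ for which $S_{r_0}(y+r_0\mfk_0)$ touches $\S$ at a first point $x\in\S$. I would then split into the three possibilities for the location of $x$: (i) $x\in\mathring\S$, where $\S$ and $S_{r_0}$ are internally tangent at $x$; (ii) $x\in\p\S\cap\mathring P_i$ for one $i$, where, exactly as in \cref{Prop-ell	ipticpoint1}, the fact that both $\S$ and $S_{r_0}$ meet $P_i$ at the same angle $\theta_0^i$ forces internal tangency of $\S$ and $S_{r_0}$ at $x$; and (iii) $x\in\S\cap P_1\cap P_2$, the edge case. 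In all cases where internal tangency holds, the comparison of second fundamental forms of internally tangent hypersurfaces gives that every principal curvature of $\S$ at $x$ (with respect to the outward normal) is $\ge 1/r_0>0$, so $x$ is an elliptic point. This is the same conclusion as in the half-space proposition.

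The main obstacle is case (iii), when the first touching point lands on the edge $P_1\cap P_2$ — here $\S$ is not a smooth hypersurface in the usual sense near $x$ and "tangency" must be interpreted correctly. I expect this to be handled by exactly the delicate edge analysis already developed in the proof of \cref{Thm-HK-wedge-2}: reducing (via \cref{higher-dim}) to the case $n+1=3$ where $P_1\cap P_2$ is a line with unit direction $l$, writing the tangent vectors $T_{\p\S^i}$, $T_{\p B_{r_0}^i}$ of the boundary curves on each face in terms of $\nu$, $\nu_{B_{r_0}}$, $\bar N_i$ and the opening angle $\alpha$, and using the first-touching inequalities $\langle T_{\p B_{r_0}^i},l\rangle\ge\langle T_{\p\S^i},l\rangle$ together with the angle condition $\theta^i\le\theta_0^i$ and the consequence $\cos(\theta^2-\theta^1)+\cos\alpha>0$ of $|\mfk(x)|<1$ (from \cref{Lem-k<=1} and \cref{lem-condition}) to derive a contradiction, precisely as in \textbf{Case 2.2} there. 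Thus the edge cannot be the first touching point, case (iii) is vacuous, and one of (i)–(ii) occurs, producing the elliptic point. I would also remark that in the special subcase where $S_{r_0}$ is tangent to one of the planes $P_i$ (so that $\bar N_i\parallel\nu_{B_{r_0}}(x)$), the argument of \textbf{Case 1} of \cref{Thm-HK-wedge-2} applies instead, again ruling out the edge. This completes the reduction to the tangency situation and hence the proof.
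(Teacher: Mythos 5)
Your overall strategy --- shrinking the adapted sphere family $\{S_r(y+r\mfk_0)\}_{r\ge0}$ centered at a point $y$ of the edge and extracting an elliptic point at the first touching point --- is the paper's strategy, and your cases (i) and (ii) are handled exactly as there. The genuine gap is your treatment of case (iii), the first touch at a point $x\in\S\cap P_1\cap P_2$. You propose to rule this case out by importing the Case~2.2 contradiction from the proof of \cref{Thm-HK-wedge-2}, but that argument does not transfer. There the strict inequality $\eta^i>\theta^i$ is derived from $\langle y,\bar N_i\rangle<0$, i.e.\ from the center $y$ lying in the open wedge; here you have placed $y$ on the edge itself, so $\langle y,\bar N_i\rangle=0$ and the same computation gives $\langle\nu_{B_{r_0}}(x),\bar N_i\rangle=-\cos\theta_0^i$ exactly, i.e.\ $\eta^i=\theta_0^i=\theta^i$. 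With equalities in place of strict inequalities the chain of estimates collapses and no contradiction is available. (The orientation of the first-touching inequalities also flips, since here the sphere encloses $\S$ and touches from outside rather than growing inside $\O$.)

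More importantly, case (iii) is not vacuous and should not be ruled out: if $\S$ is itself a $\vec\theta_0$-capillary spherical cap meeting the edge and $y$ is the trace of its center on $P_1\cap P_2$, the shrinking sphere coincides with $\S$ when $r_0$ equals its radius and the ``first touch'' occurs at edge points. The paper's resolution is a short direct argument rather than an exclusion: since $x,y\in P_1\cap P_2$, the identities $\langle\nu_{B_{r_0}}(x),\bar N_i\rangle=-\cos\theta_0^i=\langle\nu(x),\bar N_i\rangle$ for $i=1,2$ force $\nu(x)=\nu_{B_{r_0}}(x)+a\,l$ with $l$ spanning the edge direction; if $a\neq0$ then $\langle\nu(x),l\rangle=-\langle\nu_{B_{r_0}}(x),l\rangle\neq0$, which contradicts the fact that $\langle\nu(x),x-y\rangle\ge0$ and $\langle\nu_{B_{r_0}}(x),x-y\rangle\ge0$ with $x-y$ parallel to $l$. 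Hence $\nu(x)=\nu_{B_{r_0}}(x)$, the sphere and $\S$ are tangent at $x$, and $x$ is the desired elliptic point. You should replace your case (iii) with this tangency argument. Finally, your step ``for $r$ large enough $\S\subset\overline{B_r(y+r\mfk_0)}$'' is not automatic when $|\mfk_0|=1$, since the balls then exhaust only a half-space; this is precisely what \cref{Lem-k0<=1-2} provides, and it should be invoked rather than compactness alone.
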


\begin{proof}In view of \cref{higher-dim}, we need only consider $3$-dimensional case.

{\color{black}
Since the corners $\Gamma_i$ ($i=1,2$) are smooth co-dimension two submanifolds in $\p \mbR^{n+1}_+$, $\p\S$ is embedded in $\mbR^{n+1}$ (see \cref{Sec2}), when $\S\cap P_1\cap P_2\neq\emptyset$, we must have that $\p\O\cap P_1\cap P_2$ is also a co-dimension two submanifold in $\mbR^{n+1}$.
This means it has non-trivial interior relative to the topology of $P_1\cap P_2$, therefore we are free to}
choose any $y$ in the interior of $\p\O\cap P_1\cap P_2$ as the initial center of the sphere foliation $\{S_r(y+r\mfk_0)\}_{r\geq0}$. Thanks to \cref{Lem-k0<=1-2}, any such foliation must have a first touching point with $\S$ from outside as $r$ decreases from $\infty$.

We consider the sphere $S_{r_0}(y+r_0\mfk_0)$ which touches $\S$ for the first time at some $x\in \S$. Following the argument in \cref{Prop-ell
ipticpoint1}, we see that, if $x\in\mathring{\S}$ or $x\in\p\S\setminus (P_1\cap P_2)$, then $x$ must be an elliptic point of $\S$. Therefore, it remains to consider the case $x\in \partial\Sigma\cap  (P_1\cap P_2)$.

Since $x,y\in P_1\cap P_2$, we find
\begin{align*}
    \left<\nu_{B_r}(x),-\bar N_1\right>=\cos\theta_0^1=\<\nu(x),-\bar{N_1}\>,\\
    \left<\nu_{B_r}(x),-\bar N_2\right>=\cos\theta_0^2=\<\nu(x),-\bar{N_2}\>.
\end{align*}
 This implies that 
 either {\bf (a)} $\nu(x)=\nu_{B_r}(x)$,
 or {\bf(b)} $\nu(x)=\nu_{B_r}(x)+a l$, where $l$ is a unit vector perpendicular to both $\bar{N_1}$ and $\bar{N_2}$, and
 $a$ is a non-zero constant. 
 
We claim that the situation {\bf (b)} does not occur. Otherwise, by the fact that $\nu(x)$ and $\nu_{B_r}(x)$ are unit vectors, we can deduce that $a=-2\<\nu_{B_r(x)},l\>\neq 0$.  Thus we have
 \begin{align}\label{eq-ellipticity1}
 \<\nu(x),l\>=\<\nu_{B_r}(x)+a l,l\>=-\<\nu_{B_r}(x),l\>\neq 0.
 \end{align} 
Note that $x\neq y$ and $x-y\in  \overline\Omega\cap B_{r_0}(y+r_0\mfk_0)$, by the fact that $\nu(x)$ and $\nu_{B_r}(x)$ are outward normal vectors, we have $$\<\nu(x) ,x-y\>\geq 0, \quad \<\nu_{B_r}(x),x-y\>\geq 0.$$ Since $x,y\in P_1\cap P_2$,
$l$ is parallel to $x-y$. It follows that $\<\nu(x),l\>$ and $\<\nu_{B_r}(x), l\>$ have the same sign, which contradicts \eqref{eq-ellipticity1} and concludes the claim.

From the claim, we see that only the situation {\bf (a)} happens.
This means, $S_r$ and $\S$ are tangent at $x$.  
Thus the principal curvatures of $\S$ at $x$ are bigger than or equal to $1/r_0>0$, which implies $x$ is an elliptic point.
\end{proof}

In view of the proof of \cref{Prop-ell
ipticpoint2}, the only place where we used the condition $\S\cap P_1\cap P_2\neq\emptyset$ is that we have to use it in \cref{Lem-k0<=1-2}, as $\vert\mfk_0\vert=1$, to conclude that the sphere foliation $\{S_r(y+r\mfk_0)\}_{r\ge0}$ must touch $\S$. In this regard, we can remove the extra assumption $\S\cap P_1\cap P_2\neq\emptyset$
by strengthening the angle condition to be $\vert\mfk_0\vert<1$. Indeed, we have the following
\begin{proposition}
\label{Cor-ellipticpoint}
Let $\mfW\subset\mbR^{n+1}$ be a classical wedge. Let $\S\subset\mfW$ be a smooth, compact, embedded $\vec\theta_0$-capillary hypersurface with $\vert\mfk_0\vert<1$.
Then there exists at least one elliptic point on $\S$.
\end{proposition}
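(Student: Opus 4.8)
The plan is to dispose of the case $\S\cap P_1\cap P_2\neq\emptyset$ by citation and then reduce everything to the case where $\S$ stays off the edge, which turns out to be \emph{simpler} than the situation already treated. If $\S\cap P_1\cap P_2\neq\emptyset$, then $|\mfk_0|<1$ in particular gives $|\mfk_0|\le 1$, so the conclusion follows at once from the preceding proposition (the wedge case under the hypothesis $\S\cap P_1\cap P_2\neq\emptyset$). So I would assume $\S\cap P_1\cap P_2=\emptyset$; by compactness $\delta:=d(\S,P_1\cap P_2)>0$. The point of the strengthened hypothesis $|\mfk_0|<1$ is that the delicate first-touching-on-the-edge analysis used there will now not be needed at all.

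First I would produce the critical radius. Fix any $y\in P_1\cap P_2$ and consider the sphere family $\{S_r(y+r\mfk_0)\}_{r>0}$; by \cref{foliation}, since $|\mfk_0|<1$, each of these spheres meets $\mathring P_i$ at the constant angle $\theta_0^i$. A point $p$ lies in $B_r(y+r\mfk_0)$ precisely when $|p-y|^2-2r\langle p-y,\mfk_0\rangle\le r^2\left(1-|\mfk_0|^2\right)$, and since $1-|\mfk_0|^2>0$ the right-hand side grows quadratically in $r$ while the left-hand side is affine in $r$; hence $\bigcup_{r>0}B_r(y+r\mfk_0)=\mbR^{n+1}$, the balls $B_r(y+r\mfk_0)$ increase with $r$, and (as in the half-space foliation of \cref{Thm-HK-halfspace}) the spheres $S_r(y+r\mfk_0)$ foliate $\mbR^{n+1}$. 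Since $\S$ is compact it lies in $B_r(y+r\mfk_0)$ for all large $r$; letting $r$ decrease, there is a first $r_0\in(0,\infty)$ for which $S_{r_0}(y+r_0\mfk_0)$ meets $\S$, and at a touching point $x\in\S$ one has $\S\subset\overline{B_{r_0}(y+r_0\mfk_0)}$ with $S_{r_0}(y+r_0\mfk_0)$ tangent to $\S$ from outside.

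Next comes the analysis at $x$, which is short here. Since $x\in\S$ and $d(\S,P_1\cap P_2)=\delta>0$, the touching point cannot lie on the edge, so either $x\in\mathring\S$ or $x\in\p\S\cap\mathring P_i$ for some $i$. In the former case $\S$ and $S_{r_0}(y+r_0\mfk_0)$ are internally tangent at $x$; in the latter, $\S\cap P_i$ and $S_{r_0}(y+r_0\mfk_0)\cap P_i$ are curves through $x$ in $P_i$, internally tangent there, and meeting $P_i$ at the common angle $\theta_0^i$ — by capillarity for $\S$ and by \cref{foliation} for the sphere — so $\S$ and $S_{r_0}(y+r_0\mfk_0)$ are tangent at $x$ as hypersurfaces in $\mbR^{n+1}$, exactly as in the half-space case. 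In both cases the interior tangency forces all principal curvatures of $\S$ at $x$, with respect to $\nu$, to be $\ge 1/r_0>0$, so $x$ is an elliptic point.

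The main — and essentially only — obstacle is the exhaustion step: one must check that $|\mfk_0|<1$ is exactly what lets the sphere family sweep past the compact $\S$ from an \emph{arbitrary} point of the edge. This genuinely fails when $|\mfk_0|=1$, for then $\{B_r(y+r\mfk_0)\}_{r>0}$ exhausts only a half-space and may never reach $\S$ — which is precisely why the earlier wedge proposition had to require $\S\cap P_1\cap P_2\neq\emptyset$. Everything downstream of the exhaustion step is a direct transcription of arguments already given.
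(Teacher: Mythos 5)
Your proof is correct and takes essentially the same route as the paper: the paper's own argument is simply to rerun the proof of the preceding elliptic-point proposition, observing that when $\vert\mfk_0\vert<1$ the family $\{S_r(y+r\mfk_0)\}_{r\geq 0}$ foliates all of $\mbR^{n+1}$ (\cref{Lem-k0<=1-1}), so the first-touching argument no longer needs $\S\cap P_1\cap P_2\neq\emptyset$. Your case split (citing that proposition when $\S$ meets the edge, and noting that otherwise the touching point must fall in $\mathring\S$ or $\p\S\cap\mathring P_i$) together with the explicit exhaustion computation just makes explicit what the paper leaves implicit.
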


If the hypersurfaces are CMC hypersurfaces, we have 
\begin{proposition}
\label{Lem-H>0}
Let $\mfW\subset\mbR^{n+1}$ be a classical wedge. Let $\S\subset\mfW$ be a smooth, compact, embedded $\vec\theta_0$-capillary hypersurface.
If $\S$ is of constant mean curvature $H$, then $H>0$, provided $\pi-\alpha\leq\theta_0^1+\theta_0^2.$
\end{proposition}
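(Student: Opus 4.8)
The plan is to convert $H>0$ into an integral inequality via the Minkowski formula, and then to establish that inequality by a projection/isoperimetric argument, handling separately the case in which $\S$ meets the edge.

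First I would use that $H$ is constant: combining \eqref{Minkowski-wedge} (the $r=1$ case of \cref{Prop-Minkowski}) with \eqref{mink0} gives
\[
 H\,(n+1)\,\vert\O\vert \;=\; H\int_\S\langle x,\nu\rangle\,dA \;=\; n\int_\S\bigl(1+\langle\nu,\mfk_0\rangle\bigr)\,dA ,
\]
so, since $\vert\O\vert>0$, it suffices to show $\int_\S(1+\langle\nu,\mfk_0\rangle)\,dA>0$. Applying the divergence theorem on $\O$ to an arbitrary constant vector field $e$ and using that the outward normal of $\O$ is $\nu$ on $\S$ and $\bar N_i$ on $T_i=\p\O\cap P_i$, I get $\int_\S\langle\nu,e\rangle\,dA=-\sum_i\langle\bar N_i,e\rangle\vert T_i\vert$; with $e=\mfk_0$ and $\langle\mfk_0,\bar N_i\rangle=\cos\theta_0^i$ the goal becomes
\[
 \vert\S\vert \;>\; \cos\theta_0^1\,\vert T_1\vert+\cos\theta_0^2\,\vert T_2\vert .
\]

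Next, if $\S\cap P_1\cap P_2\neq\emptyset$ then $\vert\mfk_0\vert\le1$ by \cref{Rem-k0}(iii), and a first touching sphere from the foliation $\{S_r(y+r\mfk_0)\}_{r>0}$ with $y$ on the edge produces an elliptic point of $\S$ exactly as in the proof of \cref{Cor-ellipticpoint}, whence $H>0$ because $\S$ is CMC. If $\S\cap P_1\cap P_2=\emptyset$, I would instead choose in the identity above the unit vector $e$ parallel to $\vert T_1\vert\bar N_1+\vert T_2\vert\bar N_2$ and use $\vert\langle\nu,e\rangle\vert\le1$ to get
\[
 \vert\S\vert \;\ge\; \bigl\vert\,\vert T_1\vert\bar N_1+\vert T_2\vert\bar N_2\,\bigr\vert \;=\; \sqrt{\vert T_1\vert^2+\vert T_2\vert^2-2\cos\alpha\,\vert T_1\vert\,\vert T_2\vert}\,,
\]
where $\langle\bar N_1,\bar N_2\rangle=-\cos\alpha$ was used. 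If $\cos\theta_0^1\vert T_1\vert+\cos\theta_0^2\vert T_2\vert\le0$ we are done (as $\vert\S\vert>0$); otherwise, squaring, it is enough to have $\sin^2\theta_0^1\vert T_1\vert^2+\sin^2\theta_0^2\vert T_2\vert^2\ge 2(\cos\alpha+\cos\theta_0^1\cos\theta_0^2)\vert T_1\vert\vert T_2\vert$, which is immediate when $\cos\alpha+\cos\theta_0^1\cos\theta_0^2\le0$ and otherwise follows from AM--GM provided $\sin\theta_0^1\sin\theta_0^2\ge\cos\alpha+\cos\theta_0^1\cos\theta_0^2$, i.e. $\cos\alpha\le-\cos(\theta_0^1+\theta_0^2)$ — and this is exactly the hypothesis $\pi-\alpha\le\theta_0^1+\theta_0^2$ rewritten through monotonicity of $\cos$, valid as long as $\theta_0^1+\theta_0^2\le\pi+\alpha$. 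Strictness then follows because equality throughout would force $\langle\nu,e\rangle\equiv\pm1$, hence $\nu$ constant and $\S$ contained in a hyperplane, which for $n\ge2$ is impossible since a hyperplane (necessarily containing the edge direction, as $e\perp P_1\cap P_2$) meets $\mfW$ in an unbounded set and cannot close off a compact $\O$ together with $\p\mfW$.

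The delicate point I expect is the range $\theta_0^1+\theta_0^2>\pi+\alpha$: there the hypothesis is automatic, so the proposition genuinely carries content, but the AM--GM step above is lossy and the quadratic in $\vert T_1\vert/\vert T_2\vert$ may become negative, so the crude bound on $\vert\S\vert$ no longer suffices for all boundary data. In that range $\S$ must be disjoint from the edge (again \cref{Rem-k0}(iii)) and exactly one of $\theta_0^1,\theta_0^2$ is obtuse once $\cos\theta_0^1\vert T_1\vert+\cos\theta_0^2\vert T_2\vert>0$; the missing estimate should come either from a sharper lower bound for $\vert\S\vert$ exploiting that $\O$ then touches the edge along $T_1\cap T_2$, or from a touching/elliptic-point argument tailored to $\vert\mfk_0\vert>1$. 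Closing this last case cleanly is the main obstacle.
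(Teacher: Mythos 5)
Your reduction is clean and correct as far as it goes: with $H$ constant, \eqref{Minkowski-wedge} and \eqref{mink0} give $H(n+1)|\O|=n\int_\S(1+\langle\nu,\mfk_0\rangle)\,dA$, and the divergence theorem turns the right-hand side into $|\S|-\cos\theta_0^1|T_1|-\cos\theta_0^2|T_2|$, so the problem becomes a projection estimate; the edge-touching case via the elliptic-point propositions is also fine. But the proof is genuinely incomplete, and you have correctly located where: when $\S\cap P_1\cap P_2=\emptyset$ and $\theta_0^1+\theta_0^2>\pi+\alpha$, your chain $|\S|\ge\bigl|\,|T_1|\bar N_1+|T_2|\bar N_2\,\bigr|$ followed by AM--GM requires $\sin\theta_0^1\sin\theta_0^2\ge\cos\alpha+\cos\theta_0^1\cos\theta_0^2$, i.e.\ $\theta_0^1+\theta_0^2\le\pi+\alpha$, and in the complementary range the quadratic form $\sin^2\theta_0^1\,t^2-2(\cos\alpha+\cos\theta_0^1\cos\theta_0^2)\,t+\sin^2\theta_0^2$ has positive discriminant, so there really are ratios $|T_1|/|T_2|$ for which the crude bound on $|\S|$ fails. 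Since the hypothesis of the proposition is only the lower bound $\pi-\alpha\le\theta_0^1+\theta_0^2$ (and, by \cref{remark1.10}(i), the spherical caps disjoint from the edge live precisely in the regime $\theta_0^1+\theta_0^2>\pi+\alpha$, so that case is not vacuous), this is a genuine gap, not a removable technicality of your method.

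The paper's proof avoids the issue entirely by a sliding-plane argument: take $p_0\in\p\S$ at maximal distance from the edge, say $p_0\in P_2$, and slide a plane parallel to the edge making contact angle $\theta_0^2$ with $P_2$ in from infinity until it first touches $\S$. First touching at an interior point or at $p_0$ gives $H>0$ by the strong maximum principle or Hopf's lemma (comparing with the zero mean curvature of the plane); the hypothesis $\pi-\alpha\le\theta_0^1+\theta_0^2$ is used only to rule out (or, in the equality case, to handle via Hopf again) a first touching on $\S\cap P_1$, since the sliding plane meets $P_1$ at angle $\pi-\alpha-\theta_0^2\le\theta_0^1$. That argument is insensitive to whether $\theta_0^1+\theta_0^2$ exceeds $\pi+\alpha$. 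Your integral route is an attractive alternative in the range $\pi-\alpha\le\theta_0^1+\theta_0^2\le\pi+\alpha$ (equivalently $|\mfk_0|\le1$, which is the only range in which the paper actually invokes this proposition, in \cref{Thm-non-exist}), but to prove the statement as written you would need either the maximum-principle argument or a sharper lower bound on $|\S|$ in the remaining regime.
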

\begin{proof}We remark that the free boundary case, that is $\vec\theta_0=(\frac{\pi}{2},\frac{\pi}{2})$ has been proved by Lopez \cite{Lopez14}. We consider here the general case.

To begin, we take $p_0\in\p \S$ to be the point of maximal distance of $\p \S$ from the edge $P_1\cap P_2$. Assume without loss of generality that $p_0\in P_2$. Let $\mathcal P$ be the family of planes  parallel  to the edge of $\mfW$ and  having contact angle $\theta_0^2$  with $P_2$. Starting from one of such a plane near infinite and  moving it among this family 
until the first time that one of plane $P$ in $\mathcal P$ touches $\S$. By definition of $p_0$ and $P$, it is only possible that the first touching occurs at certain interior point of $\S$, at $p_0\in P_2$ or at some $p_1\in \S\cap P_1$.

For the former two cases, one can use the strong maximum principle for the interior point and the Hopf-lemma for the boundary point, and obtain $H>0$. For the last case, we shall use our angle assumption $\pi-\alpha\leq\theta_1+\theta_2$.

Indeed, if $\pi-\alpha<\theta_1+\theta_2$, then the first touching point must not occur at any points of $\S\cap P_1$: since $P$ is a plane having contact angle $\theta_0^2$ with $P_2$, we know that the contact angle of $\S$ with $P_1$, say $\theta_P^1$, is $\pi-\alpha-\theta_0^2$. By the angle assumption, we find
\begin{align}
    \theta_P^1=\pi-\alpha-\theta_0^2<\theta_0^1,
\end{align}
which is not possible if $P$ touches $\S\cap P_1$ from outside for the first time.

If $\pi-\alpha=\theta_0^1+\theta_0^2$, then $\theta_P^1=\theta_0^1$ from the  above discussion, and we can use the Hopf's boundary point lemma again to find that $H>0$. This completes the proof.
\end{proof}

Now, we are in the position to prove the Alexandrov-type theorem and the non-existence theorem.
\begin{proof}[Proof of \cref{Thm-Alexandrov}]
    Before we proceed the proof, we emphasize that the condition $\vert\mfk_0\vert\leq1$ ensures the validity of the inequality $$1+\left<\nu,\mfk_0\right>\geq0$$
    pointwisely on $\S$. In particular, as $L=1$, we have
    \begin{align*}
        1-\cos\theta_0\left<\nu,E_{n+1}\right>\geq0
    \end{align*}
    along $\S$.
    
On  one hand, by virtue of \cref{Prop-ell
ipticpoint2} and G\"arding's argument \cite{Garding59} (see also \cite[Section 3]{Ros87}), we know that $H_j$ are positive, for $j\le r$. 
    Applying \cref{Thm-HK-wedge-2} and using the Maclaurin inequality $$H_1\ge H_r^{1/r},$$ 
    we find
    \begin{align}\label{eq-Alex-1} (n+1)H_r^{1/r}\vert\O\vert\le H_r^{1/r}\int_\S\frac{1+\left<\nu,\mfk_0\right>}{H/n}\rd A\leq\int_\S\left(1+\left<\nu,\mfk_0\right>\right)\rd A,
    \end{align}
    and equality holds if and only if $\S$ is a ${\theta}_0$-capillary spherical cap.
    
    On the other hand, 
    using the Minkowski formula \eqref{Minkowski-wedge} and the Maclaurin inequality again, we have
    \begin{align*}
        0
        =&\int_\S\left(1+\left<\nu,\mfk_0\right>\right)H_{r-1}-H_r\left<x,\nu\right>\rd A\notag\\
        \geq&\int_\S\left(1+\left<\nu,\mfk_0\right>\right)H_r^{\frac{r-1}{r}}-H_r\left<x,\nu\right>\rd A\notag\\
        =&H_r^{\frac{r-1}{r}}\int_\S1+\left<\nu,\mfk_0\right>-H_r^{1/r}\left<x,\nu\right>\rd A
        \\
        =&H_r^{\frac{r-1}{r}}\left\{\int_\S1+\left<\nu,\mfk_0\right>\rd A- (n+1)H_r^{1/r} |\Omega|\right\},
        \end{align*}
    where in the last equality we have used \eqref{mink0}.
    Thus equality in \eqref{eq-Alex-1} holds, and hence $\S$ is a $\vec{\theta}_0$-capillary spherical cap. This completes the proof.
\end{proof}

\begin{proof}[Proof of \cref{Thm-non-exist}]
For the CMC case, we notice that our condition \eqref{wedge-condition} implies $\pi-\alpha\leq\theta_0^1+\theta_0^2$ automatically, thanks to \cref{lem-condition}. Therefore, we can use \cref{Lem-H>0} to see that $\S$ is of positive constant mean curvature. 

For the constant $H_r$ case, since we assume $\vert\mfk_0\vert<1$, we may use \cref{Cor-ellipticpoint} to conclude that $\S$ has an elliptic point, from which we see that $H_r$ is a positive constant.

In view of this, arguing as in the proof of \cref{Thm-Alexandrov}, we can use \cref{Prop-Minkowski} together with \cref{Thm-HK-wedge}  to show that the equality case happens in the Heintze-Karcher inequality \eqref{EQ-HK-wedge}, and hence $\S$ must be a $\vec\theta_0$-spherical cap. However, if this is the case, a simple geometric relation (see {\cref{Fig-thm1.7}}) then implies that $\alpha+(\pi-\theta_0^1)+(\pi-\theta_0^2)<\pi$, i.e., $\pi+\alpha<\theta_0^1+\theta_0^2$, which is incompatible with \eqref{wedge-equiv}.
The proof is complete.
\end{proof}
\begin{figure}[H]
	\centering
	\includegraphics[width=12cm]{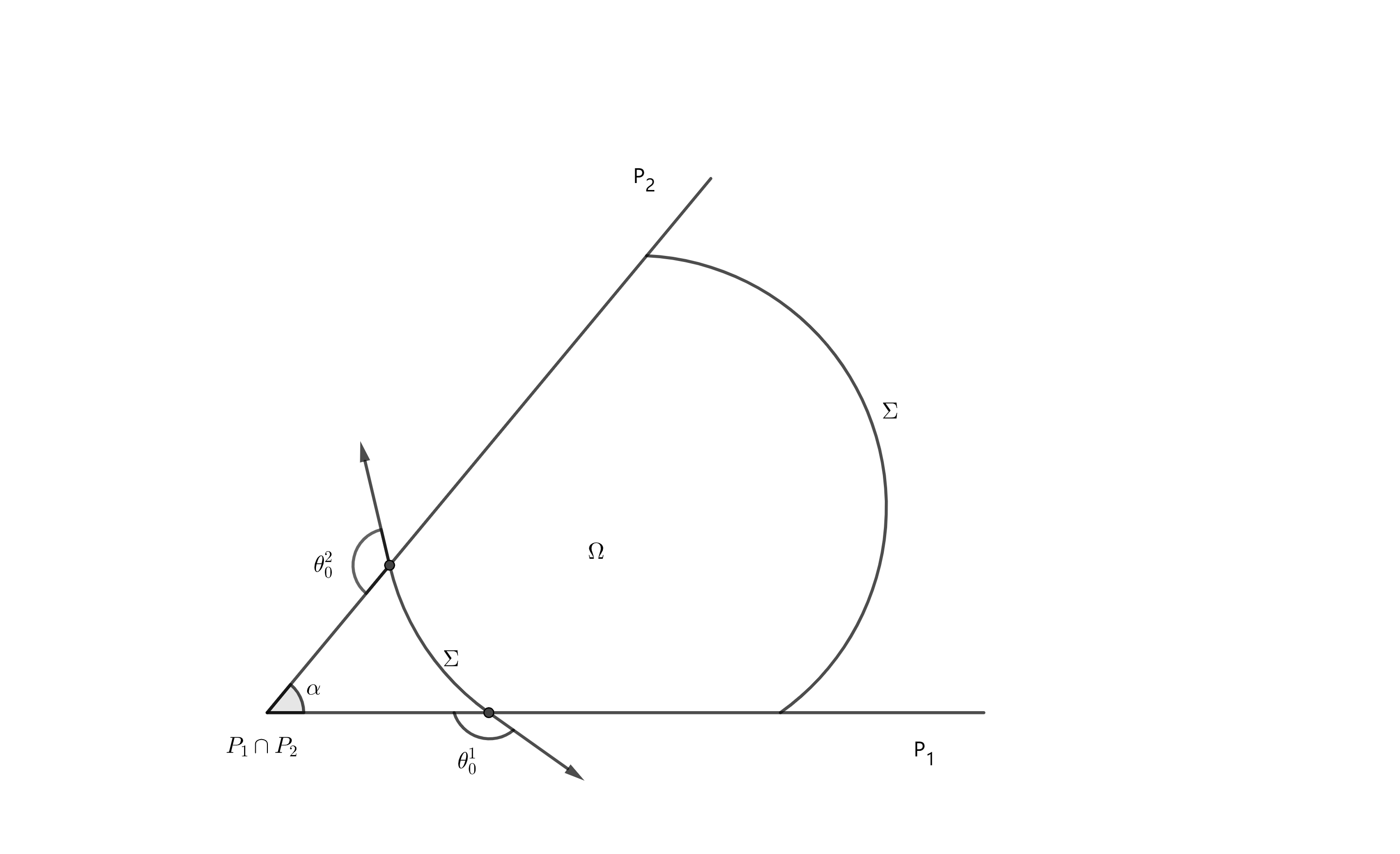}
	\caption{}
	\label{Fig-thm1.7}
\end{figure}

\

\appendix
\section{Miscellaneous Results in Wedge}\label{App-1}

\begin{lemma}\label{lem-condition}
For the case $L=2$, namely, $\mfW\subset\mbR^{n+1}$ is a classical wedge,  \eqref{wedge-condition} is equivalent to \eqref{wedge-equiv}, i.e.,
\begin{align*} 
    \vert\pi-(\theta_0^1+\theta_0^2)\vert\le\alpha\le\pi-\vert\theta_0^1-\theta_0^2\vert.
\end{align*}
Similarly, \eqref{wedge-condition} with  strict inequality is equivalent to \eqref{wedge-equiv} with  strict inequality.
\end{lemma}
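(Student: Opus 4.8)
The plan is to make the condition $|\mfk_0|\le 1$ completely explicit in terms of $\alpha,\theta_0^1,\theta_0^2$ and then recognize \eqref{wedge-equiv}. The only geometric input I would use is that, since $\bar N_1,\bar N_2$ are the \emph{outward} unit normals of the two hyperplanes bounding $\mfW$ and the opening angle of the wedge is $\alpha\in(0,\pi)$, the angle between $\bar N_1$ and $\bar N_2$ equals $\pi-\alpha$; hence $\langle\bar N_1,\bar N_2\rangle=-\cos\alpha$. (The quarter-space $\{x_1\ge0,\,x_2\ge0\}$, where $\alpha=\pi/2$ and the two outward normals are orthogonal, fixes the sign; thin and wide wedges check the limiting cases $\alpha\to0$ and $\alpha\to\pi$.)

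Next I would solve for $c_1,c_2$ in $\mfk_0=c_1\bar N_1+c_2\bar N_2$. Writing $a=\cos\theta_0^1$ and $b=\cos\theta_0^2$, the defining relations $\langle\mfk_0,\bar N_i\rangle=\cos\theta_0^i$ form the $2\times2$ linear system $c_1-c_2\cos\alpha=a$, $-c_1\cos\alpha+c_2=b$, whose Gram determinant is $1-\cos^2\alpha=\sin^2\alpha>0$. Since $|\mfk_0|^2=\langle\mfk_0,c_1\bar N_1+c_2\bar N_2\rangle=c_1a+c_2b$, Cramer's rule yields the closed form $|\mfk_0|^2=\bigl(\cos^2\theta_0^1+\cos^2\theta_0^2+2\cos\alpha\cos\theta_0^1\cos\theta_0^2\bigr)/\sin^2\alpha$.

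The core step is to analyze $|\mfk_0|^2\le1$: clearing the positive denominator, this becomes $\cos^2\alpha+2\cos\alpha\cos\theta_0^1\cos\theta_0^2+(\cos^2\theta_0^1+\cos^2\theta_0^2-1)\le0$, which I would read as an upward-opening quadratic in $x=\cos\alpha$. Its discriminant collapses to $4(1-\cos^2\theta_0^1)(1-\cos^2\theta_0^2)=4\sin^2\theta_0^1\sin^2\theta_0^2\ge0$, so the roots are $x=-\cos\theta_0^1\cos\theta_0^2\pm\sin\theta_0^1\sin\theta_0^2$, that is $-\cos(\theta_0^1-\theta_0^2)$ and $-\cos(\theta_0^1+\theta_0^2)$. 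Thus $|\mfk_0|^2\le1$ holds precisely when $\cos\alpha$ lies between these two numbers.

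Finally I would convert this into an inequality on $\alpha$. Because $\sin\theta_0^1\sin\theta_0^2>0$ (both angles lie in $(0,\pi)$), one has $-\cos(\theta_0^1-\theta_0^2)<-\cos(\theta_0^1+\theta_0^2)$, so the condition is $-\cos(\theta_0^1-\theta_0^2)\le\cos\alpha\le-\cos(\theta_0^1+\theta_0^2)$. Writing $-\cos t=\cos(\pi-t)=\cos|\pi-t|$ and noting that $|\theta_0^1-\theta_0^2|\in[0,\pi)$, $|\pi-(\theta_0^1+\theta_0^2)|\in[0,\pi)$ and $\alpha\in(0,\pi)$ all lie where $\cos$ is strictly decreasing, the two inequalities are equivalent to $\alpha\le\pi-|\theta_0^1-\theta_0^2|$ and $\alpha\ge|\pi-(\theta_0^1+\theta_0^2)|$, which is exactly \eqref{wedge-equiv}; the strict version follows by carrying strict inequalities through the same chain. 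I expect the only subtlety to be this last translation: since $\theta_0^1+\theta_0^2$ may exceed $\pi$ and $\theta_0^1-\theta_0^2$ may be negative, one has to handle the absolute values and the monotonicity ranges of cosine with a little care, but no deeper difficulty arises.
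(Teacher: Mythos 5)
Your proposal is correct and follows essentially the same route as the paper: both compute $|\mfk_0|^2=\bigl(\cos^2\theta_0^1+\cos^2\theta_0^2+2\cos\alpha\cos\theta_0^1\cos\theta_0^2\bigr)/\sin^2\alpha$ from the same $2\times2$ system and reduce $|\mfk_0|\le1$ to the factored inequality $\bigl(\cos\alpha+\cos(\theta_0^1+\theta_0^2)\bigr)\bigl(\cos\alpha+\cos(\theta_0^1-\theta_0^2)\bigr)\le0$, which you obtain via the roots of the quadratic in $\cos\alpha$ rather than by direct factoring. Your final translation through the monotonicity of cosine on $[0,\pi]$ is in fact spelled out more carefully than in the paper, which simply states the equivalent chain $|\theta_0^1-\theta_0^2|\le\pi-\alpha\le\theta_0^1+\theta_0^2\le\pi+\alpha$.
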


\begin{proof}
First we note that \eqref{wedge-equiv} can be viewed of somewhat a compatible condition for admitting spherical caps in a wedge(a wedge is determined by its dihedral angle $\alpha$.) with prescribed angles $\vec{\theta_0}=(\theta^1_0,\theta^2_0)$.

By definition of $\mfk_0$ and the fact that $\left<\bar N_1,\bar N_2\right>=\cos({\pi-\alpha})=-\cos\alpha$, we find
\begin{align*}
    \begin{cases}
    c_1-c_2\cos\alpha&=\cos\theta^1_0,\\
    -c_1\cos\alpha+c_2&=\cos\theta^2_0.
    \end{cases}
\end{align*}
A simple computation then yields 
\begin{align*}
    \begin{pmatrix}
    c_1\\
    c_2
    \end{pmatrix}
    =\frac{1}{\sin^2\alpha}
    \begin{pmatrix}
    1&\cos\alpha\\
    \cos\alpha&1
    \end{pmatrix}
    \begin{pmatrix}
    \cos\theta^1_0\\
    \cos\theta^2_0
    \end{pmatrix},
\end{align*}
and it follows that
\begin{align*}
    \vert \mfk_0\vert^2
    =&\left<\mfk_0,c_1\bar N_1+c_2\bar N_2\right>
    =c_1\cos\theta^1_0+c_2\cos\theta^2_0\notag\\
    =&\begin{pmatrix}
    \cos\theta^1_0&\cos\theta^2_0
    \end{pmatrix}
    \begin{pmatrix}
    c_1\\
    c_2
    \end{pmatrix}\notag\\
    =&\frac{\cos^2{\theta_0^1}+\cos^2\theta_0^2+2\cos{\theta_0^1}\cos{\theta_0^2}\cos{\alpha}}{\sin^{2}{\alpha}}.
\end{align*}
 Thus \eqref{wedge-condition} can be rewritten as 
 \begin{align*}
      \cos^2{\theta_0^1}+\cos^2\theta_0^2+2\cos{\theta_0^1}\cos{\theta_0^2}\cos{\alpha}\leq\sin^{2}{\alpha}.
 \end{align*}
which is 
\begin{align*}
    \big(\cos\alpha+\cos(\theta_0^1+\theta_0^2)\big)\big(\cos\alpha+\cos(\theta_0^1-\theta_0^2)\big)\leq 0.
\end{align*}
Thus we see that \eqref{wedge-condition} is equivalent to 
\begin{align*}
 |\theta_0^1-\theta_0^2|\leq\pi-\alpha\leq \theta_0^1+\theta_0^2\leq \pi+\alpha,
\end{align*}
which is just \eqref{wedge-equiv}.
\end{proof}

The following observation is important for our analysis in \cref{Thm-HK-wedge-2}. 
\begin{lemma}\label{Lem-k<=1}
Let $\mfW\subset\mbR^{n+1}$ be a classical wedge.
If $\S\subset\mfW$ is a smooth, compact, embedded $\vec\theta$-capillary hypersurface.  Let $\mfk:\p\S\to \rr^{n+1}$ be given by $\mfk(x)=\sum\limits_{i=1}^2c_i(x)\bar N_i$ such that $\<\mfk(x),\bar N\>=\cos\theta^i(x).$
If $\Sigma\cap P_1\cap P_2 \neq\emptyset$, then we have $\vert \mfk\vert\leq 1$ on $\Sigma\cap P_1\cap P_2$.
Moreover, for any $x\in\S\cap P_1\cap P_2$,
$P_1\cap P_2\subset T_x\Sigma$ if and only if $|\mfk(x)|=1$.
\end{lemma}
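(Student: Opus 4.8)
The plan is to work at a fixed point $x\in\Sigma\cap P_1\cap P_2$ and analyze the $2$-plane spanned by $\bar N_1(x),\bar N_2(x)$, which is the orthogonal complement of $P_1\cap P_2$. First I would observe that the vector $\mathfrak k(x)=c_1(x)\bar N_1+c_2(x)\bar N_2$ is by construction the unique vector in $\mathrm{span}\{\bar N_1,\bar N_2\}$ satisfying $\langle\mathfrak k(x),\bar N_i\rangle=\cos\theta^i(x)$, so exactly as in the computation of $|\mathfrak k_0|^2$ carried out in the proof of \cref{lem-condition}, one gets
\begin{align*}
|\mathfrak k(x)|^2=\frac{\cos^2\theta^1(x)+\cos^2\theta^2(x)+2\cos\theta^1(x)\cos\theta^2(x)\cos\alpha}{\sin^2\alpha}.
\end{align*}
Thus $|\mathfrak k(x)|\le 1$ is equivalent to the polynomial inequality $\big(\cos\alpha+\cos(\theta^1+\theta^2)\big)\big(\cos\alpha+\cos(\theta^1-\theta^2)\big)\le 0$ in the angles at $x$, i.e.\ to $|\theta^1(x)-\theta^2(x)|\le\pi-\alpha\le\theta^1(x)+\theta^2(x)$. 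So the analytic content of the lemma reduces to establishing this angle inequality at every edge point, using that $\Sigma$ is a smooth capillary hypersurface touching the edge.

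The key geometric input is that at $x\in\Sigma\cap P_1\cap P_2$ the tangent plane $T_x\Sigma$ must contain the line $P_1\cap P_2$ or, in the higher-dimensional case, the $(n-1)$-plane $P_1\cap P_2$ — wait, this is only forced when $|\mathfrak k(x)|=1$; in general what is true is that $\Sigma$ meets $P_i$ along $\Gamma_i$ at angle $\theta^i(x)$, so $\nu(x)$ lies in the cone determined by the two half-planes. Concretely, decompose in the orthonormal-ish frame adapted to the edge: let $l$ be a unit vector along $P_1\cap P_2$ (in higher dimensions, the unit normal within $P_1\cap P_2$ to the codimension-two set $\partial\Sigma\cap P_1\cap P_2$, reducing everything to a $3$-dimensional picture as in \cref{higher-dim}); then $\nu(x)\perp l$ is impossible to force directly, but the co-normals $\bar\nu_1,\bar\nu_2$ of $\Gamma_1,\Gamma_2$ inside $P_1,P_2$ and the capillary identities $\nu=-\cos\theta^i\bar N_i+\sin\theta^i\bar\nu_i$ pin $\nu(x)$ down. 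I would compute $\langle\nu(x),\nu(x)\rangle=1$ using these two representations and the fact that $\bar\nu_1,\bar\nu_2$ live in the plane spanned by $l$ and the $\bar N$'s; expanding $1=|\nu(x)|^2$ and eliminating the auxiliary quantities yields precisely the relation $\cos^2\theta^1+\cos^2\theta^2+2\cos\theta^1\cos\theta^2\cos\alpha\le\sin^2\alpha$, with equality exactly when the $l$-component of $\nu(x)$ vanishes, i.e.\ when $\nu(x)\in\mathrm{span}\{\bar N_1,\bar N_2\}$, equivalently when $l\in T_x\Sigma$, equivalently $P_1\cap P_2\subset T_x\Sigma$. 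That simultaneously gives the inequality $|\mathfrak k|\le 1$ and its equality characterization.

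The main obstacle I anticipate is the bookkeeping of the frame at $x$: there are four co-normal-type vectors ($\bar\nu_1,\bar\nu_2$ in $P_1,P_2$, and $\mu_1,\mu_2$ in $\Sigma$) plus $\nu$, $\bar N_1$, $\bar N_2$, and $l$, all constrained by orthogonality and the capillary angle relations \eqref{munu}–\eqref{defn-nu}, and one must extract a single scalar identity cleanly. The cleanest route is probably to write $\nu(x)=a\,\bar N_1+b\,\bar N_2+\lambda\, l$ with $a,b,\lambda$ determined by $\langle\nu,\bar N_i\rangle=-\cos\theta^i$ (two linear equations in $a,b$ using $\langle\bar N_1,\bar N_2\rangle=-\cos\alpha$) and $\langle\nu,l\rangle=\lambda$; then $|\nu|^2=1$ becomes $a\,(-\cos\theta^1)+b\,(-\cos\theta^2)+\lambda^2=1$, and since $a\bar N_1+b\bar N_2=-\mathfrak k(x)$ we get $|\mathfrak k(x)|^2+\lambda^2=1$, whence $|\mathfrak k(x)|\le 1$ immediately, with equality iff $\lambda=0$ iff $l\in T_x\Sigma$. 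This is the shortest path and avoids grinding through the $\bar\nu_i$'s altogether; the only care needed is checking $\mathrm{span}\{\bar N_1,\bar N_2,l\}$ is three-dimensional (true since $\bar N_1,\bar N_2$ are independent and $l\perp\bar N_1,\bar N_2$) and that $\nu(x)$ indeed lies in it, which follows because $\nu(x)\perp T_x(\partial\Sigma\cap P_1\cap P_2)$ together with $\nu(x)\perp$ the remaining tangent directions of $\Sigma$ — in the $3$-dimensional reduction $\nu(x)$ is automatically in the span of $\bar N_1,\bar N_2,l$ since these span all of $\mathbb R^3$.
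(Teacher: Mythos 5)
Your final argument is correct, and it reaches the conclusion by a genuinely different — and cleaner — route than the paper's own proof. After reducing to the three\-/dimensional space $\mathrm{span}\{\bar N_1,\bar N_2,l\}$ orthogonal to $T_x(\p\S\cap P_1\cap P_2)$ (exactly the reduction of \cref{higher-dim}; note $\nu(x)$ lies in this space because $\nu(x)\perp T_x\S\supset T_x(\p\S\cap P_1\cap P_2)$, as you observe at the end), you write $\nu(x)=v+\lambda l$ with $v\in\mathrm{span}\{\bar N_1,\bar N_2\}$ and $\lambda=\<\nu(x),l\>$. The capillary conditions $\<\nu,\bar N_i\>=-\cos\theta^i(x)$ together with $l\perp\bar N_1,\bar N_2$ and the invertibility of the Gram matrix of $\{\bar N_1,\bar N_2\}$ force $v=-\mfk(x)$, and orthogonality of the splitting gives the exact identity
\begin{align*}
1=\vert\nu(x)\vert^2=\vert\mfk(x)\vert^2+\<\nu(x),l\>^2,
\end{align*}
which yields $\vert\mfk(x)\vert\le1$ and the equality characterization ($\<\nu,l\>=0$ iff $l\in T_x\S$ iff $P_1\cap P_2\subset T_x\S$) in one stroke. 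The paper instead computes $\<\bar N_1\wedge\nu,\bar N_2\wedge\nu\>=-\cos\alpha-\cos\theta^1\cos\theta^2$, applies Cauchy--Schwarz to obtain $\vert\cos\alpha+\cos\theta^1\cos\theta^2\vert\le\sin\theta^1\sin\theta^2$, and must then invoke the explicit formula for $\vert\mfk\vert^2$ in terms of the angles (the computation of \cref{lem-condition}) to translate this into $\vert\mfk\vert\le1$, with the equality case coming from the equality case of Cauchy--Schwarz. Your Pythagorean identity subsumes both steps, needs neither the cross product nor \cref{lem-condition}, and makes transparent that $-\mfk(x)$ is precisely the orthogonal projection of $\nu(x)$ onto $\mathrm{span}\{\bar N_1,\bar N_2\}$. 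The only criticism is presentational: the first two routes you sketch (reducing to the angle inequality, or expanding $\vert\nu\vert^2$ through the co-normals $\bar\nu_i$) are detours that you rightly abandon, so the written-up proof should go directly to the decomposition in the last paragraph.
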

\begin{proof}In view of \cref{higher-dim}, we need only consider the $3$-dimensional case.

Let $x\in\S\cap P_1\cap P_2$, In the following, we compute at $x$. We have
$$\<\bar{N_1},\bar{N_2}\>=-\cos \alpha,\quad \<\nu,\bar{N_i}\>=-\cos\theta^i,\quad i=1,2. $$
Thus
\begin{align*}
 \<\bar{N_1}\wedge \nu,\bar{N_2}\wedge \nu\>
 &=\<\bar{N_1},\bar{N_2}\>-\<\bar{N_1},\nu\>\<\bar{N_2},\nu\>\notag\\
 &=-\cos\alpha-\cos\theta^1 \cos\theta^2.
\end{align*}
Since $|\<\bar{N_1}\wedge \nu, \bar{N_2}\wedge \nu\>|\leq \sin\theta^1\sin\theta^2$, we deduce
\begin{align*}
 \vert\cos\alpha+\cos\theta^1 \cos\theta^2 \vert\leq \sin\theta^1\sin\theta^2,  
\end{align*}
which implies $\vert \mfk\vert \leq 1$.

If $P_1\cap P_2\subset T_x \Sigma $, we have $ |\<\bar{N_1}\wedge \nu,\bar{N_2}\wedge \nu\>|=\sin\theta^1\sin\theta^2$. Then we get
\begin{align*}
   |\cos\alpha+\cos\theta^1 \cos\theta^2| =\sin\theta^1\sin\theta^2,
\end{align*}
which is equivalent to $|\mfk(x)|=1$.
\end{proof}
We point out that, in the discussion above,
if $\S$ intersects $P_1$ and $P_2$ transversally at $x\in P_1\cap P_2$, then \cref{Lem-k<=1} is included in \cite[Lemma 2.5]{Li21}.

\begin{lemma}\label{Lem-k0<=1-1}
Let $\mfW\subset\mbR^{n+1}$ be a classical wedge.
If $\S\subset\mfW$ is a smooth, compact, embedded $\vec\theta_0$-capillary with $\vert\mfk_0\vert\leq1$, then for any $y\in\Omega$, the family of spheres $\{S_r(y+r\mfk_0)\}_{r\geq0}$ must touch $\S$.
\begin{proof}
The case $\vert \mfk_0\vert<1$ follows trivially, since $\{S_r(y+r\mfk_0)\}_{r\geq0}$ foliates the whole $\mbR^{n+1}$. As for $\vert\mfk_0\vert=1$, we proceed by the following observation.

\noindent {\bf Observation.}
Since $y\in S_r(y+r\mfk_0)$ for any $r\geq0$, with $\nu_{S_r}(y)=-\mfk_0$. Moreover,
\begin{align}
    B_r(y+r\mfk_0)\rightarrow H_y^-:=\{z\in\mbR^{n+1}:\left<z-y,\mfk_0\right>>0\}\quad\text{as }r\rightarrow\infty.
\end{align}
In other words, the family of spheres $S_r(y+r\mfk_0)$ foliates the half-space $H^-_y$.

{\color{black}
We claim that for any $y\in\Omega$, there holds $H_y^-\cap\S\neq\emptyset$.
To see this, we consider the following situations separately.

{\bf Case 1.} $\S\cap P_1\cap P_2=\emptyset$.

By definition of $\S$, we see that $\p\Omega=\S\cup T_1\cup T_2$ with $T_1$, $T_2$ away from the edge $P_1\cap P_2$ (see \cref{Fig-thm1.7} for illustration).
Therefore for any $\mfk_0\in\mathbb{S}^n$, the open half-space determined by $y,\mfk_0$ must intersect $\S$.

{\bf Case 2.} $\S\cap P_1\cap P_2\neq\emptyset$.

In this case,
since $\S\cap P_1\cap P_2\neq\emptyset$, we must have $P_1\cap P_2\subset T_x\S$ for any $x\in\S\cap P_1\cap P_2$, according to \cref{Lem-k<=1} (with $\vec\theta=\vec\theta_0$ chosen therein).
However, this contradicts to our assumption on $\S$, precisely, that $\Gamma_i$ $(i=1,2)$ are smooth co-dimension two submanifolds in $\mbR^{n+1}$.
This proves the claim and hence completes the proof.
}

\end{proof}
\end{lemma}
In the proof of \cref{Prop-ell
ipticpoint2}, we use the family of spheres $S_r(y+r\mfk_0)$ for some $y$ in the interior of $\bar\Omega\cap P_1\cap P_2$, the following observation is needed.

\begin{lemma}\label{Lem-k0<=1-2}
Let $\mfW\subset\mbR^{n+1}$ be a classical wedge.
If $\S\subset\mfW$ is a smooth, compact, embedded $\vec\theta_0$-capillary such that $\S\cap P_1\cap P_2\neq\emptyset$, then for any $y$ in the interior of $\p\O\cap P_1\cap P_2$, the family of spheres $\{S_r(y+r\mfk_0)\}_{r\geq0}$ must touch $\S$.
\end{lemma}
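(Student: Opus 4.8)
The plan is to adapt the strategy of \cref{Lem-k0<=1-1}. First note that the hypothesis $\S\cap P_1\cap P_2\neq\emptyset$ forces $\vert\mfk_0\vert\le1$, by \cref{Lem-k<=1} applied with $\vec\theta\equiv\vec\theta_0$ (cf.\ \cref{Rem-k0}(iii)), so we are in one of the two cases $\vert\mfk_0\vert<1$ or $\vert\mfk_0\vert=1$. Moreover, if $y\in\S$ then every sphere $S_r(y+r\mfk_0)$ passes through $y$ and the family touches $\S$ trivially; hence we may assume $y\notin\S$ and let $d>0$ denote the distance from $y$ to $\S$. By \cref{higher-dim} we may also assume $n+1=3$.

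For $z\neq y$ lying in the region swept by the spheres, let $r(z)>0$ be the unique parameter with $z\in S_{r(z)}(y+r(z)\mfk_0)$; concretely $r(z)$ is the positive root of $(1-\vert\mfk_0\vert^2)r^2+2\left<z-y,\mfk_0\right>r-\vert z-y\vert^2=0$, and $r(\cdot)$ is continuous. If $\vert\mfk_0\vert<1$, the spheres foliate all of $\mbR^{n+1}$, so $r(\cdot)$ attains a positive minimum $r_0$ on the compact set $\S$, at some $z_0$; since every $z\in\S$ has $r(z)\ge r_0$, i.e.\ lies outside the open ball $B_{r_0}(y+r_0\mfk_0)$, the sphere $S_{r_0}(y+r_0\mfk_0)$ touches $\S$ at $z_0$. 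Now let $\vert\mfk_0\vert=1$. Since $O\in P_1\cap P_2$ and $\mfk_0\perp(P_1\cap P_2)$ we have $\left<y,\mfk_0\right>=0$, and the balls $B_r(y+r\mfk_0)$ exhaust the open half-space $H_y:=\{z:\left<z,\mfk_0\right>>0\}$ as $r\to\infty$; here $r(z)=\vert z-y\vert^2/(2\left<z-y,\mfk_0\right>)\ge\vert z-y\vert/2\ge d/2$ for $z\in H_y$, and $r(z)\to+\infty$ as $z$ approaches $\{\left<z,\mfk_0\right>=0\}$ while staying at distance $\ge d$ from $y$. Consequently, \emph{provided $\S\cap H_y\neq\emptyset$}, the continuous function $r(\cdot)$ attains a positive minimum $r_0$ on the relatively open set $\{z\in\S:\left<z,\mfk_0\right>>0\}$, and exactly as above $S_{r_0}(y+r_0\mfk_0)$ touches $\S$.

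It thus suffices to establish the claim $\S\cap H_y\neq\emptyset$ when $\vert\mfk_0\vert=1$. Pick $x_0\in\S\cap P_1\cap P_2$; since $x_0\in\Gamma_1\cap\Gamma_2$, the capillary boundary condition gives $\left<\nu(x_0),\bar N_i\right>=-\cos\theta_0^i$ for $i=1,2$, hence $\left<\nu(x_0),\mfk_0\right>=-(c_1\cos\theta_0^1+c_2\cos\theta_0^2)=-\vert\mfk_0\vert^2=-1$ (using $\vert\mfk_0\vert^2=c_1\cos\theta_0^1+c_2\cos\theta_0^2$ from the proof of \cref{lem-condition}), so equality in Cauchy--Schwarz yields $\nu(x_0)=-\mfk_0$. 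Therefore, near $x_0$, $\O$ lies on the side of $\S$ into which $-\nu(x_0)=\mfk_0$ points; since $\left<x_0,\mfk_0\right>=0$ this should produce a point $z\in\O$ with $\left<z,\mfk_0\right>>0$, i.e.\ $\O\cap H_y\neq\emptyset$. Granting that, the linear function $z\mapsto\left<z,\mfk_0\right>$ attains its maximum over the compact set $\bar\O$ at some $z^*$ with $\left<z^*,\mfk_0\right>>0$; as a linear function has no interior maximum, $z^*\in\partial\O=\S\cup T_1\cup T_2$, and it lies neither in the interior of any $T_i$ (which would force $\mfk_0\parallel\bar N_i$, impossible since $\vert\left<\mfk_0,\bar N_i\right>\vert=\vert\cos\theta_0^i\vert<1=\vert\mfk_0\vert$) nor on $P_1\cap P_2$ (where $\left<\cdot,\mfk_0\right>\equiv0$), so $z^*\in\S$ and the claim follows.

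The step I expect to be the real obstacle is the implication ``$\O$ on the $\mfk_0$-side of $\S$ near $x_0$'' $\Rightarrow$ ``$\O\cap H_y\neq\emptyset$''. It is immediate unless $\mfk_0$ belongs to the outward normal cone of $\mfW$ (i.e.\ the $c_i$ in \eqref{k0} are both $\ge0$), in which case $\mfW\subset\{\left<z,\mfk_0\right>\le0\}$, so $\S\subset\bar\O\subset\mfW$ lies in $\{\left<z,\mfk_0\right>\le0\}$, $\S\cap H_y=\emptyset$, and the spheres $S_r(y+r\mfk_0)\subset\overline{H_y}$ can meet $\S$, if at all, only at $y\notin\S$. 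One must therefore show that, under our hypotheses and with $\vert\mfk_0\vert=1$, this configuration is incompatible with ``$y$ lies in the interior of $\partial\O\cap P_1\cap P_2$'': the key point is that $\nu=-\mfk_0$ along $\S\cap P_1\cap P_2$ makes $\S$ tangent to the edge wherever it meets it, and a careful planar analysis in the $2$-plane orthogonal to $P_1\cap P_2$ (in the spirit of the main wedge theorem) is needed to pin down the geometry of $\partial\O$ along $P_1\cap P_2$ and either exclude this configuration or show it renders the hypothesis vacuous. Making this dichotomy precise — and cross-checking it with \cref{lem-condition} — is where the work lies.
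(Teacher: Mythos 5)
Your reduction to $n+1=3$, the dichotomy $\vert\mfk_0\vert<1$ versus $\vert\mfk_0\vert=1$, the foliation/exhaustion argument, and the identification $\nu(x_0)=-\mfk_0$ at a point $x_0\in\S\cap P_1\cap P_2$ all match the paper's proof. But the proposal has a genuine gap, and you have located it yourself: in the case $\vert\mfk_0\vert=1$ everything hinges on showing $\S\cap H_y\neq\emptyset$, and your argument for this ("near $x_0$, $\O$ lies on the side into which $\mfk_0$ points, so there is $z\in\O$ with $\left<z,\mfk_0\right>>0$") is not justified. The point $x_0$ is a corner of $\p\O$ where $\S$, $T_1$ and $T_2$ meet, and the direction $\mfk_0=c_1\bar N_1+c_2\bar N_2$ need not point into $\mfW$ from $x_0$: if $c_1,c_2\ge0$ then $\left<z,\mfk_0\right>=c_1\left<z,\bar N_1\right>+c_2\left<z,\bar N_2\right>\le0$ for every $z\in\mfW$, so $\S\cap H_y=\emptyset$ and the whole scheme collapses. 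You flag this configuration but leave its exclusion as "where the work lies," so the proof is not complete as written.

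The paper closes exactly this gap with a short sign argument that you should be able to reconstruct. Since $\vert\mfk_0\vert=1$ and $x_0\in\S\cap P_1\cap P_2$, \cref{Lem-k<=1} gives $P_1\cap P_2\subset T_{x_0}\S$, hence $\nu(x_0)\in{\rm span}\{\bar N_1,\bar N_2\}$ and $\nu(x_0)=-\mfk_0=-c_1\bar N_1-c_2\bar N_2$ (as you derived). Now take any $y_i\in\p\O\cap\mathring{P_i}$. Because $\bar\nu_i(x_0)$ points outward from $P_i\cap\p\O$ and $\nu(x_0)=-\cos\theta_0^i\bar N_i+\sin\theta_0^i\bar\nu_i(x_0)$, one gets $\left<x_0-y_i,\nu(x_0)\right>>0$, while $\left<x_0-y_i,\bar N_i\right>=0$ and $\left<x_0-y_i,\bar N_j\right>>0$ for $j\neq i$. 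Plugging $\nu(x_0)=-c_1\bar N_1-c_2\bar N_2$ into the first inequality yields $-c_j\left<x_0-y_i,\bar N_j\right>>0$, i.e.\ $c_j<0$, for both $j$. With $c_1,c_2<0$ one then has $\left<z,\mfk_0\right>=c_1\left<z,\bar N_1\right>+c_2\left<z,\bar N_2\right>>0$ for every $z\in\mfW\setminus(P_1\cap P_2)$, so all of $\S$ away from the edge lies in $H_y$, and your minimization of $r(\cdot)$ over $\{z\in\S:\left<z,\mfk_0\right>>0\}$ goes through. Once this claim is inserted, the rest of your argument is sound and essentially coincides with the paper's.
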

\begin{proof}
In view of \cref{higher-dim}, we need only consider the $3$-dimensional case. 

By virtue of \cref{Lem-k<=1}, we have $\vert\mfk_0\vert\le 1$.
The case $\vert\mfk_0\vert<1$ follows trivially from \cref{Lem-k0<=1-1}. As for $\vert\mfk_0\vert=1$, by {\bf Observation} above and the fact that $\<y,\mfk_0\>=0$ due to $y\in P_1\cap P_2$, it suffices to show that $P_1,P_2\subset H_0^-$.

{\bf Claim. }In \eqref{k0}, we have $c_1,c_2<0$, provided that $\vert\mfk_0\vert=1$ and $\S\cap P_1\cap P_2\neq\emptyset.$

If the claim holds, a direct computation yields: for any $z\in \mathring{P_1}$,
\begin{align*}
    \<z,\mfk_0\>=\<z,c_1\bar N_1+c_2\bar N_2\>=c_2\<z,\bar N_2\>.
\end{align*}
Notice that $\{l,l_1,\bar N_1\}$ forms an orthonormal basis of $\mbR^3$, where $l$ is a fixed unit vector, parallel to $P_1\cap P_2$, $l_1$ is the unit inwards pointing conormal of $\p P_1$ in $P_1$. In this coordinate, since $z\in\mathring{P_1}$, it can be expressed as $z=a_0l+a_1l_1+0\bar N_1$ with $a_1>0$. It follows that $\<z,\mfk_0\>=a_1c_2(-\sin\alpha)>0$. Similarly, we have: for any $z\in\mathring{P_2}$, there holds $\<z,\mfk_0\>>0$, which implies that $P_1,P_2\subset H_0^-$ and proves the Lemma.

We are thus left to prove the {\bf Claim.} Indeed, by \cref{Lem-k<=1}, let $x\in\S\cap P_1\cap P_2$, then we have $P_1\cap P_2\in T_x\S$. In view of this, we obtain: $\nu(x)\in{\rm span}\{\bar N_1,\bar N_2\}$, where $\nu(x)$ is the unit outward normal of $\S$ at $x$. Due to the contact angle condition, we must have
\begin{align}
    \<\nu(x),\bar N_i\>=-\cos\theta_0^i,
\end{align}
comparing with the definition of $\mfk_0$ \eqref{k0}, we thus find: $$\nu(x)=-\mfk_0=-c_1\bar N_1-c_2\bar N_2.$$ Since $\nu(x)$ is the outward unit normal of $\S$ at $x$, for any $y_i\in\p\Omega\cap \mathring{P_i}, i=1,2$, we have
\begin{align*}
    &\<x-y_i,\nu(x)\>>0.
\end{align*}
Indeed, it follows from $\nu(x)=-\cos \theta_0^i \bar N_i+\sin \theta_0^i \bar \nu_i(x)$ and the fact $\bar \nu_i(x)$ is orthogonal to $l$ and is pointing outward $P_i$ at $x$.

Meanwhile, since $y_i\in \mathring{P_i}$, we definitely have
\begin{align}
    &\<-y_i,\bar N_i\>=0,\notag\\
    &\<-y_i,\bar N_j\>>0\quad\text{for }j\neq i.\notag
\end{align}
Recall that $x\in P_1\cap P_2$ and hence $\<x,\bar N_i\>=0$ for each $i$, we thus obtain
\begin{align}
    &\<x-y_i,\bar N_i\>=0,\notag\\
    &\<x-y_i,\bar N_j\>>0,\quad\text{for }j\neq i.\notag
\end{align}
Combining all above and invoking that $\nu(x)=-c_1\bar N_1-c_2\bar N_2$, we thus find: $c_1<0, c_2<0$. This proves the {\bf Claim} and completes the proof.

\end{proof}

\noindent{\bf Proof of   \eqref{ineq1-first-touch}$\Leftrightarrow$\eqref{ineq1.1-first-touch}:}
Using
\begin{align*}
\cos\theta^2=\cos(\theta^2-\theta^1)\cos\theta^1-\sin(\theta^2-\theta^1)\sin\theta^1,
\end{align*}
we see that \eqref{ineq1-first-touch} is equivalent to
\begin{align*}
  &(\cos\eta^2+\cos\eta^1\cos\alpha)\sin\theta^1\notag\\
   \geq&\Big(\cos(\theta^2-\theta^1)\cos\theta^1-\sin(\theta^2-\theta^1)\sin\theta^1+\cos\theta^1\cos\alpha\Big)\sin\eta^1.
\end{align*}
  After rearranging, we get
 \begin{align*}&\sin\theta^1\cos\eta^2+\cos\alpha \sin{(\theta^1-\eta^1)}\notag\\=&
     \sin\theta^1\cos\eta^2+\cos\alpha(\sin\theta^1\cos\eta^1-\sin\eta^1\cos\theta^1)\notag\\
     \geq& \cos(\theta^2-\theta^1)(\cos \theta^1\sin\eta^1-\sin\theta^1\cos\eta^1)\notag\\
     &+\sin\theta^1\Big(\cos{(\theta^2-\theta^1)}\cos\eta^1-\sin(\theta^2-\theta^1)\sin\eta^1\Big)
     \\=&-\cos(\theta^2-\theta^1)\sin(\theta^1-\eta^1)
     +\sin\theta^1\cos(\theta^2-\theta^1+\eta^1).
 \end{align*}
 which is just \eqref{ineq1.1-first-touch}.\qed

\





\bibliographystyle{siam}
\bibliography{JWXZ23.bib}

\end{document}